\theoremstyle{plain}
\newtheorem{Theorem}{Theorem}
\newtheorem{theorem}[Theorem]{Theorem}
\newtheorem{proposition}[Theorem]{Proposition}
\newtheorem{corollary}[Theorem]{Corollary}
\newtheorem{lemma}[Theorem]{Lemma}
\theoremstyle{definition}
\newtheorem{example}[Theorem]{Example}
\newtheorem{definition}[Theorem]{Definition}
\newtheorem{remark}[Theorem]{Remark}
\theoremstyle{remark}
\begin{document}

%
%
%
%
%
%
%
%
%

\title{Quasi-linear maps and image transformations}

\author{S. V. Butler}

\address{Department of Mathematics\\
University of California, Santa Barbara\\
Isla Vista, CA 93117\\
USA}

\email{svetbutler@gmail.com}

\begin{abstract}
Conic quasi-linear maps are nonlinear operators from $C_0(X)$ to a normed linear space $E$ 
which preserve nonnegative linear combinations on positive cones generated by single functions; 
quasi-linear maps are linear on singly generated subalgebras.
While nonlinear, a quasi-linear map is bounded iff it is continuous.  $E = \mathbb{R}$ gives quasi-integrals, which
correspond to (deficient) topological measures - nonsubadditive set functions generalizing measures. 
Like image measures $\mu \circ u^{-1}$, (d-) image transformations move
(deficient) topological measures from one space to another, generalizing $u^{-1}$.
We give criteria for a (d-) image transformation to be  $u^{-1}$ for some proper continuous function.
We study the interrelationships between (conic) quasi-linear maps, quasi-integrals, 
(deficient) topological measures and (d-) image transformations when $E = C_0(Y), X, Y$ are locally compact.
(Conic) quasi-homomorphisms behave like homomorphisms on singly generated subalgebras or cones. 
We show that (conic) quasi-homomorphisms are in 1-1 correspondence with (d-) image transformations 
and with certain continuous proper functions. 
We give criteria for a (conic) quasi-linear map to be a (conic) quasi-homomorphism, 
and for the latter to be an algebra homomorphism. Any conic quasi-linear map or bounded quasi-linear map 
is a composition of an algebra homomorphism with the basic quasi-linear map, 
and we give criteria for the latter to be linear.
We study the adjoints of (d-) image transformations and (conic) quasi-linear maps; for 
(conic) quasi-homomorphisms they give Markov-Feller operators with nonlinear duals.
\end{abstract}

\subjclass{47H99, 47H07, 47H04, 28A33}

\keywords{Quasi-linear maps and their adjoints,  (d-) image transformations, quasi-homomorphisms, deficient topological measures, p-conic quasi-linear functionals, 
k-proper functions}

\date{Jan. 17, 2025}

\maketitle






\section{Introduction}

In this paper we introduce the concepts of conic quasi-linear maps and d- image transformations. We also generalize ideas of 
quasi-linear maps and image transformations and all major results about them to locally compact spaces. 
We study the intricate interplay between (conic) quasi-linear maps, quasi-integrals, 
(deficient) topological measures and (d-) image transformations.

Conic quasi-linear maps are nonlinear operators from $C_0(X)$ to a normed linear space $E$ 
which preserve nonnegative linear combinations on each positive cone generated by a single function. 
Quasi-linear maps are linear on each subalgebra  generated by a single function.  
Being nonlinear, these operators nevertheless have some important properties of linear operators. 
For example, a quasi-linear map is bounded iff it is continuous. An important subclass 
of quasi-linear maps are quasi-homomorphisms and conic quasi-homomorphisms, which behave like homomorphisms on singly 
generated subalgebras or, more generally, on singly generated cones.  

When $E = \mathbb R$, we obtain quasi-linear functionals and p-conic quasi-linear functionals. 
Such nonlinear functionals (also called quasi-integrals) correspond to topological measures and deficient topological measures.
These set functions generalize measures, and their definitions are close to that of a regular Borel measure. 
However, (deficient) topological measures are defined on open and closed subsets of a topological space, which leads to some striking differences from 
Borel measures. For instance,  (deficient) topological measures are not subadditive.  
The theory of quasi-linear functionals and topological measures, introduced by J. F. Aarnes \cite{Aarnes:TheFirstPaper},
has origins in mathematical axiomatization and interpretations of quantum physics.
Later this theory proved to be influential in symplectic geometry, 
playing an important role in function theory on symplectic manifolds (\cite{PoltRosenBook}). 
The theory of quasi-linear functionals and topological measures is also connected to other areas in mathematics, 
including theory of Choquet integrals, fractals, and probability and statistics.
A full bibiliography on the origins of the theory of quasi-linear functionals 
and topological measures and its connections to other areas of mathematics would be very long,  
but \cite{AarnesJohansenRustad},  \cite{Butler:QLFLC},  \cite{Butler:WkConv}, \cite{AlfImTrans}, \cite{AlfMedian}, and \cite{Svistula:Choquet} 
give more information on these matters and provide many additional references.
In Section \ref{Prelim} we give some basic information and provide some examples.

In this paper we are often interested in (conic) quasi-linear maps from $C_0(X)$ to $C_0(Y)$, when $X,Y$ are locally compact. 
Such maps allow us to move quasi-integrals from one space to another.
We give several criteria for a (conic) quasi-linear map to be a (conic) quasi-homomorphism. 
The composition of quasi-linear maps need not be a quasi-linear map, but the composition of (conic) quasi-homomorphisms is a 
(conic) quasi-homomorphism. 
The adjoint of a (conic) quasi-homorphism (and some other quasi-linear maps) is a continuous 
operator between spaces of (deficient) topological measures equipped with the weak topology.  
If $ \theta$ is a (conic) quasi-linear map, then for any $ y \in Y$ we obtain a quasi-integral $\theta(f)(y), f \in C_0(X)$. 
Conversely, from quasi-integrals we can obtain a (conic) quasi-linear map with this property.  
This is possible because (conic) quasi-linear maps from $C_0(X)$ to $C_0(Y)$ correspond to  certain proper functions from
$Y$ to the space of (deficient) topological measures on $X$. We also study the basic quasi-linear map which maps $f \in C_0(X)$ to 
a continuous function $ \tilde f$ on the set of (deficient) topological measures, where $ \tilde f(\mu)$ is the quasi-integral of $f$ with respect to $ \mu$.    
These and other results are in Section \ref{SectQLM}.

Image transformations and d-image transformations generalize the idea of  $u^{-1}$ in image measures $\mu \circ u^{-1}$. 
They move subsets of one space to subsets of another space so that (deficient) 
topological measures on the second space produce (deficient) topological measures on the first space.  
We give criteria for a (d-) image transformation to be the inverse of some proper continuous function.
We study properties of a (d-) image transformation,  define its adjoint and show that it is continuous. 
We also show that there is a 1-1 correspondence between (d-) image transformations from $X$ to $Y$ and certain continuous proper functions from 
$Y$ to $\{0,1\}$-valued (deficient) topological measures on $X$. We give many examples, showing, for instance, 
that some image transformations can annihilate sets of certain size, or that the uniqueness of a 
Haar measure on a locally compact topological group
fails in the more general context of topological measures.  These results are in Section \ref{SectIT}.   

We show that there is a 1-1 correspondence between (d-) image transformations and (conic) quasi-homomorphisms. 
This allows us to study many properties of both objects and obtain important results. 
We give several criteria for a (conic) quasi-homomorphism to be an algebra homomorphism.  
We further study the basic (conic) quasi-homorphism and give many equivalent conditions for it to be linear. 
We show that any conic quasi-linear map or bounded quasi-linear map can be represented as a composition of an algebra homomorphism with 
the basic quasi-linear map. We give several criteria for a (deficient) topological measure to be in the closed convex hull of $\{0,1\}$-valued 
(deficient) topological measures.  The adjoint of a (d-) image transformation 
is a Markov-Feller operator, whose dual is a quasi-linear map, a nonlinear operator.
These and other results are in Section \ref{ITqhCorr}.

In this paper we introduced the concepts of d-image transformations, k-proper functions, and conic quasi-linear maps. 
Image transformations and quasi-homomorphisms were studied before in the context (or close to the context) of compact spaces. 
We generalize all major results and examples from previous works. 

\section{Preliminaries} \label{Prelim}

In this paper all spaces are assumed to be Hausdorff. We may abbreviate and write LC for "locally compact".  
By $C(X)$ we denote the set of all real-valued continuous functions on $X$ with the extended uniform norm, 
by $C_b(X)$ the normed space of bounded continuous functions on $X$,   
by $C_0(X)$ the set of continuous functions on $X$ vanishing at infinity, 
by $C_c(X)$ the set of continuous functions with compact support, and  by 
$C_0^+(X)$, $C_b^+(X)$, $C_c^+(X)$, $C^+(X)$ the collections of all nonnegative functions from $C_0(X)$, $C_b(X)$, $C_c(X)$, $C(X)$, respectively.
When we consider maps into extended real numbers we assume that any such map is not identically $\infty$. 
We denote by $\overline E$ the closure of a set $E$, and by $ \bigsqcup$ a union of disjoint sets.
Notation $K_t \searrow K$ means that a decreasing net of sets ($t < s \Longrightarrow K_s \subseteq K_t $) decreases to 
$K = \bigcap_{t \in T} K_t$. Similarly, $U_t \nearrow U$ stands for an increasing to $U = \bigcup_{t \in T} U_t$ net of sets.
We denote by $id$ the identity function $id(x) = x$, 
by $1_K$ the characteristic function of a set $K$,  by $ \delta_x$ the point mass at $x$, and by $P_e(X)$ the set of all point masses on $X$. 
By $ supp \,  f $ we mean $ \overline{ \{x: f(x) \neq 0 \} }$, and  
$Coz f = \{x: f(x) \neq 0\}$.
Several collections of sets are used often.   They include:
$\mathscr{O}(X)$;
$\mathscr{C}(X)$; and
$\mathscr{K}(X)$-- 
the collection of open subsets of $X$;  the collection of closed subsets of   $X $;
and the collection of compact subsets of   $X $, respectively.

\begin{definition} \label{MDe2}
Let $X$ be a  topological space and $\nu$ be a set function on a family $\mathcal{E}$ of subsets of $X$ that 
contains $\mathscr{O}(X) \cup \mathscr{C}(X)$
with values in $[0, \infty]$. 
We say that $\nu$ is
 simple if it only assumes  values $0$ and $1$;
 finite if $ \nu(X) < \infty$; 
compact-finite  if $\nu(K) < \infty$ for every compact $K$; 
 $\tau-$ smooth on compact sets if for every decreasing net
$K_{\alpha} \searrow K, K_\alpha, K \in \mathscr{K}(X)$ one has $\nu(K_{\alpha}) \rightarrow \nu(K)$;
$\tau-$ smooth on open sets if for every increasing net
$U_{\alpha} \nearrow U, U_\alpha, U \in \mathscr{O}(X)$ one has $\nu(U_{\alpha})  \rightarrow \nu(U)$.  
\end{definition}

\begin{definition}
A measure on $X$ is a countably additive set function on a $\sigma$-algebra of subsets of $X$ with values in $[0, \infty]$.  
A Borel measure on $X$ is a measure on the Borel $\sigma$-algebra on $X$.  
A Radon measure  $m$  on $X$ is  a compact-finite Borel measure that is outer regular on all Borel sets and inner regular on all open sets, i.e.
$ m(E) = \inf \{ m(U): E \subseteq U, U \text{  is open} \} $ for every Borel set $E$, and 
$m(U) = \sup \{  m(K): K \subseteq U, K  \text{  is compact} \}$ for every open set $U$. 
A Borel measure is regular if it is outer regular and inner compact regular on all Borel sets.   
\end{definition}

For the following fact see, for example,  \cite[Ch. XI, 6.2]{Dugundji} and \cite[L. 2.5]{Butler:TMLCconstr}.
\begin{lemma} \label{easyLeLC}
Let $K \subseteq U, \ K \in \mathscr{K}(X),  \ U \in \mathscr{O}(X)$ in a LC space $X$.
Then there exists a set  $V \in \mathscr{O}(X)$ such that $ C = \overline V$ is compact and  
$ K \subseteq V \subseteq \overline V \subseteq U. $
If $X$ is also locally connected, and either $K$ or $U$ is connected, then $V$ and $C$ can be chosen to be connected. 
\end{lemma}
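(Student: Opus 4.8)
The plan is to prove the unconditional statement first, by the classical local-compactness argument, and then to upgrade it under the extra hypothesis.

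\textbf{Step 1: producing $V$.} For each $x\in K$, local compactness provides an open $G_x\ni x$ with $\overline{G_x}\in\mathscr K(X)$. The subspace $\overline{G_x}$ is compact Hausdorff, hence regular, and $U\cap G_x$ is an open neighbourhood of $x$ in that subspace; regularity yields a set $V'_x\ni x$, relatively open in $\overline{G_x}$, with $\mathrm{cl}_{\overline{G_x}}(V'_x)\subseteq U\cap G_x$. Writing $V'_x=W\cap\overline{G_x}$ for some $W$ open in $X$ and setting $V_x=W\cap G_x$, we obtain an open (in $X$) neighbourhood of $x$ with $\overline{V_x}\subseteq\mathrm{cl}_{\overline{G_x}}(V'_x)\subseteq U$, and $\overline{V_x}$ is compact. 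Compactness of $K$ gives $x_1,\dots,x_n$ with $K\subseteq V:=\bigcup_{i=1}^n V_{x_i}$, whence $\overline V=\bigcup_{i=1}^n\overline{V_{x_i}}$ is compact and contained in $U$. (Alternatively one may simply invoke regularity of the locally compact space $X$, as in the cited references.)

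\textbf{Step 2: the connected cases.} Assume now $X$ is also locally connected, so that components of open sets are open and closures of connected sets are connected. In Step 1 we may then replace $G_x$ by the component of $x$ in it, and $V_x$ by the component of $x$ in it; their closures stay compact and inside $U$, so every $V_x$ may be taken connected. If $K$ is connected, form the graph on $\{1,\dots,n\}$ joining $i$ and $j$ whenever $V_{x_i}\cap V_{x_j}\neq\emptyset$; for each component $\mathcal C$ of this graph the union $\bigcup_{i\in\mathcal C}V_{x_i}$ is open and connected, and such unions over distinct components are disjoint, so the connected set $K$ lies in exactly one of them, which is the desired connected $V$ (its closure being a finite union of the compact sets $\overline{V_{x_i}}\subseteq U$). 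If instead $U$ is connected and nonempty, fix $x_0\in U$ and let $A$ be the set of $y\in U$ admitting a connected open $O$ with $x_0,y\in O$, $\overline O$ compact, and $\overline O\subseteq U$. By Steps 1--2 each point of $U$ has a connected open neighbourhood with compact closure contained in $U$; adjoining such a neighbourhood of $y$ to a witness for $y\in A$ shows $A$ is open, and adjoining such a neighbourhood of $y\in U\setminus A$ to a witness through one of its points (if any existed) would put $y$ in $A$, so $U\setminus A$ is open too. Hence $A=U$ by connectedness, and covering the compact $K$ by finitely many witnesses $O_{y_1},\dots,O_{y_m}$ gives the open connected set $V=\bigcup_j O_{y_j}\supseteq K$ with compact closure $\bigcup_j\overline{O_{y_j}}\subseteq U$.

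\textbf{Main difficulty.} Step 1 and the reductions in Step 2 are routine once one recalls that locally compact Hausdorff spaces are regular and that local connectedness makes components of open sets open. The only place needing a genuine idea is the $U$-connected case: one must show that $U$ is chain-connected by connected open sets with compact closure in $U$, which is exactly the clopen-ness of the set $A$ above; after that, a finite subcover of $K$ assembles the connected $V$.
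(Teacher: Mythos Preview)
The paper does not prove this lemma at all; it simply cites \cite[Ch.\ XI, 6.2]{Dugundji} and \cite[L.\ 2.5]{Butler:TMLCconstr}. Your proposal supplies a correct, self-contained argument: Step~1 is the standard regularity/finite-cover construction, and your handling of the two connected cases in Step~2 (the graph-component argument when $K$ is connected, and the clopen reachability set $A$ when $U$ is connected) is sound. In the $U$-connected case it is worth observing explicitly that each witness $O_{y_j}$ contains the fixed point $x_0$, which is what makes $\bigcup_j O_{y_j}$ connected; you use this implicitly, and it is the reason the finite union at the end works.
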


Some of the major objects in this paper are (deficient) topological measures and (p-conic) quasi-linear functionals. 

\begin{definition}\label{TMLC}
A topological measure on $X$ is a set function
$\mu:  \mathscr{C}(X) \cup \mathscr{O}(X)  \longrightarrow  [0,\infty]$ satisfying the following conditions:
\begin{enumerate}[label=(TM\arabic*),ref=(TM\arabic*)]
\item \label{TM1} 
if $A,B, A \sqcup B \in \mathscr{K}(X) \cup \mathscr{O}(X) $ then
$
\mu(A\sqcup B)=\mu(A)+\mu(B);
$
\item \label{TM2}  
$
\mu(U)=\sup\{\mu(K):K \in \mathscr{K}(X), \  K \subseteq U\}
$ for $U\in\mathscr{O}(X)$;
\item \label{TM3}
$
\mu(F)=\inf\{\mu(U):U \in \mathscr{O}(X), \ F \subseteq U\}
$ for  $F \in \mathscr{C}(X)$.
\end{enumerate}
If in \ref{TM1}  $A,B \in \mathscr{K}(X)$ then $\mu$ is called a deficient topological measure. 

For  a deficient topological measure  $\mu$ we define $ \| \mu \| = \mu(X) = \sup \{ \mu(K): K \in \mathscr{K}(X) \}$.
\end{definition} 

\noindent
Clearly, for a closed set $F$, $ \nu(F) = \infty$ iff $ \nu(U) = \infty$ for every open set $U$ containing $F$.
If two (deficient) topological measures agree on compact sets (or on open sets) then they coincide. If for (deficient) topological measures 
$\mu(A) \le \nu(A)$ for all open (or all compact) sets  $A$, then $ \mu \le \nu$.

\begin{remark} \label{tausm}
One may consult  \cite{Butler:DTMLC} for various properties of deficient topological measures on LC spaces.
A deficient topological measure is monotone, so a real-valued deficient topological measure on a LC space is a regular content. 
A deficient topological measure $ \nu$ is $\tau$-smooth on compact sets and $\tau$-smooth on open sets, 
and, in particular, is additive on open sets.
If $ F \in \mathscr{C}(X)$ and $C \in \mathscr{K}(X)$ are disjoint, then $ \nu(F) + \nu(C) = \nu ( F \sqcup C)$.
A deficient topological measure $ \nu$ is also superadditive, i.e. 
if $ \bigsqcup_{t \in T} A_t \subseteq A, $  where $A_t, A \in \mathscr{O}(X) \cup \mathscr{C}(X)$,  
and at most one of the closed sets (if there are any) is not compact, then 
$\nu(A) \ge \sum_{t \in T } \nu(A_t)$. 
Note that unlike measures, simple (deficient) topological measures are not all the extreme (deficient) topological measures  
(see, for example, \cite{QfunctionsEtm}).
\end{remark}

The following criteria for a deficient topological measure to be a measure is from \cite[Sect. 4]{Butler:DTMLC}. 

\begin{theorem} \label{subaddit}
Let $\mu$ be a deficient topological measure on a LC space $X$. 
TFAE: 
\begin{itemize}
\item[(a)]
If $C, K$ are compact subsets of $X$, then $\mu(C \cup K ) \le \mu(C) + \mu(K)$.
\item[(b)]
If $U, V$ are open subsets of $X$,  then $\mu(U \cup V) \le \mu(U) + \mu(V)$.
\item[(c)]
$\mu$ admits a unique extension to an inner regular on open sets, outer regular Borel measure 
$m$ on the Borel $\sigma$-algebra of subsets of $X$. 
$m$ is a Radon measure iff $\mu$ is compact-finite. 
If $\mu$ is finite then $m$ is an outer regular and inner closed regular Borel measure.
\end{itemize}
\end{theorem}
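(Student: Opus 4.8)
The implications $(c) \Rightarrow (a)$ and $(c) \Rightarrow (b)$ are immediate, since a Borel measure is subadditive. Also $(b) \Rightarrow (a)$ is easy: given compact $C, K$, Lemma \ref{easyLeLC} lets us squeeze each of $C, K$ between itself and an open set of finite measure with compact closure, use outer regularity \ref{TM3}, and pass to the limit. The substantive work is the circle $(a) \Rightarrow (b) \Rightarrow (c)$, so let me describe how I would organize it.

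For $(a) \Rightarrow (b)$: given open $U, V$, I would use \ref{TM2} to approximate $U \cup V$ from inside by compact sets $L \subseteq U \cup V$. The key point is that such an $L$ can be split as $L = C \cup K$ with $C \subseteq U$, $K \subseteq V$ compact — this is a standard compactness/shrinking argument (cover $L$ by finitely many open sets with compact closures contained in $U$ or in $V$, using Lemma \ref{easyLeLC}, and group them). Then $\mu(L) \le \mu(C) + \mu(K) \le \mu(U) + \mu(V)$ by hypothesis (a) and monotonicity, and taking the supremum over $L$ gives $\mu(U \cup V) \le \mu(U) + \mu(V)$.

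For $(b) \Rightarrow (c)$: having subadditivity on open sets plus the already-known properties of a deficient topological measure (monotone, additive on disjoint opens, $\tau$-smooth on open sets, superadditive — see Remark \ref{tausm}), I would first define an outer measure $m^*(E) = \inf\{\mu(U) : E \subseteq U \text{ open}\}$ on all subsets of $X$. Subadditivity of $\mu$ on open sets, together with $\tau$-smoothness on open sets, upgrades to countable subadditivity of $m^*$, so $m^*$ is a genuine outer measure; its Carathéodory-measurable sets form a $\sigma$-algebra on which $m^*$ is a measure. The crux is then to show every open set (hence every Borel set) is Carathéodory-measurable: for open $U$ and a test set $A$ with $A \subseteq W$ open, one needs $\mu(W) \ge m^*(W \cap U) + m^*(W \setminus U)$, which I would get by approximating $W \cap U$ from inside by compacts via \ref{TM2}, peeling off a compact piece, and using additivity of $\mu$ on disjoint open sets together with \ref{TM3} on the closed complement. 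That $m$ restricts to $\mu$ on open sets is then automatic from the definition of $m^*$ and \ref{TM2}; agreement on closed sets follows from \ref{TM3}. Inner regularity on open sets is \ref{TM2} again; uniqueness follows because any two such Borel extensions agree on open sets, hence (being outer regular) on all Borel sets. The Radon/finite refinements are then bookkeeping: compact-finiteness of $\mu$ is exactly compact-finiteness of $m$, which with the regularity already established makes $m$ Radon; finiteness of $\mu$ gives inner closed regularity on all Borel sets by a complementation argument.

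The main obstacle is the Carathéodory-measurability of open sets in $(b) \Rightarrow (c)$ — that is where subadditivity on opens must be combined delicately with the inner/outer regularity axioms \ref{TM2}, \ref{TM3} and additivity on disjoint open sets, and where the non-$\sigma$-algebra domain of $\mu$ forces one to work through open approximations rather than manipulate $\mu$ directly on Borel sets. Everything else is either a direct appeal to Remark \ref{tausm} and Lemma \ref{easyLeLC} or a routine outer-measure construction.
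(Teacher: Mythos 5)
You should note at the outset that the paper itself contains no proof of Theorem \ref{subaddit}: it is quoted verbatim from Section 4 of the author's earlier paper \cite{Butler:DTMLC}, so there is no in-paper argument to compare yours against. Judged on its own merits, your outline is correct and is the standard route (inner splitting of compacts for (a)$\Rightarrow$(b), then a Method-I/Carath\'eodory extension for (b)$\Rightarrow$(c)). The two delicate points you single out do go through with the tools already recorded in the paper: for (a)$\Rightarrow$(b) the splitting of a compact $L\subseteq U\cup V$ into compacts $C\subseteq U$, $K\subseteq V$ with $L=C\cup K$ is the usual finite-cover argument via Lemma \ref{easyLeLC}, and then \ref{TM2} finishes; for the Carath\'eodory measurability of an open set $U$, the ``peeling'' step is cleanest via the superadditivity in Remark \ref{tausm}: for compact $K\subseteq W\cap U$ one has $K\sqcup(W\setminus K)\subseteq W$ with $W\setminus K$ open, so $\mu(W)\ge\mu(K)+\mu(W\setminus K)\ge\mu(K)+m^*(W\setminus U)$, and taking the supremum over $K$ (using \ref{TM2}) gives $\mu(W)\ge m^*(W\cap U)+m^*(W\setminus U)$, whence measurability by outer approximation. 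Countable subadditivity of $m^*$ does need, as you say, hypothesis (b) combined with $\tau$-smoothness on open sets from Remark \ref{tausm}. Two small housekeeping remarks: in (b)$\Rightarrow$(a) you should dispose of the case $\mu(C)=\infty$ or $\mu(K)=\infty$ trivially before invoking \ref{TM3} (and the detour through sets of compact closure is unnecessary), and agreement of $m$ with $\mu$ on compact sets is via \ref{TM3} since compacts are closed. With these details filled in, the uniqueness, Radon, and inner-closed-regular refinements are, as you say, routine.
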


\begin{remark} \label{dtmisptm}
One consequence of superadditivity is this:
if $ \nu$ is a simple deficient topological measure on $X$ and $A$ is a closed or open set with $ \nu(A) =1$, then $ \nu(X \setminus A) = 0$.
If $ \nu(\{x\}) = 1$ for some $x$, then $ \nu(X \setminus \{x \}) = 0$. We see that $\nu$ is subadditive on open sets,  
and by Theorem \ref{subaddit} $\nu$ is the pointmass $ \delta_x$.
\end{remark}

\noindent
Let $TM(X)$ and $DTM(X)$ denote, respectively, the sets of all topological measures and all deficient topological measures on $X$. 

\begin{remark} \label{Vloz}
Let $X$ be LC, and let $ \mathscr{M}$  be the collection of all Borel measures on $X$ that are inner regular on open sets and outer regular 
on Borel sets.  $  \mathscr{M}$ includes regular Borel measures and Radon measures. 
Denote by $M(X)$ the restrictions to $\mathscr{O}(X) \cup \mathscr{C}(X)$ of measures from $ \mathscr{M}$.
Then
\begin{align} \label{incluMTD}
 M(X) \subsetneqq  TM(X)  \subsetneqq  DTM(X).
\end{align}
The inclusions follow from the definitions. 
Information on proper inclusion in (\ref{incluMTD}) 
and various examples are in numerous papers, including
\cite{Aarnes:TheFirstPaper}, \cite{AarnesRustad},  \cite{QfunctionsEtm}, \cite{OrjanAlf:CostrPropQlf},   \cite {Svistula:Signed},  \cite{Svistula:DTM},
\cite{Butler:TechniqLC}, \cite[Sect. 4, Sect. 5]{Butler:DTMLC}, and \cite[Sect. 15]{Butler:TMLCconstr}.
We also give some examples at the end of this section. 
\end{remark}

\begin{definition} \label{cqlf}
We call a  functional $\rho$ on $C_0(X)$  with values in $[ -\infty, \infty]$ (assuming at most one of $\infty, - \infty$) 
and $| \rho(0) | < \infty$ a p-conic quasi-linear functional if 
\begin{enumerate}[leftmargin=0.35in, label=(p\arabic*),ref=(p\arabic*)]
\item
$f\, g=0, \, f, g  \ge 0$ implies $ \rho(f+ g) = \rho(f) + \rho(g)$.
\item
 $0 \le g \le f$ implies $\rho(g) \le \rho(f)$.
\item
For each $f$, if $g,h \in A^+(f), \ a,b \ge 0$ then $\rho(a g + bh) = a \rho(g) + b \rho(h)$.
Here $ A^+(f) = \{ \phi \circ f: \ \phi \in C(\overline{f(X)}), \phi  \mbox{   is nondecreasing}\} $, (with $ \phi(0) = 0 $ 
if $X$ is noncompact) is a cone generated by $f$.
\end{enumerate}

\noindent
A map $\rho:C_0(X) \longrightarrow \mathbb{R}$
is a quasi-linear functional (or a positive quasi-linear functional) if 
\begin{enumerate}[leftmargin=0.25in, label=(\alph*),ref=(\alph*)]
\item \label{QIpositLC}
$ f \ge 0 \Longrightarrow \rho(f)  \ge 0.$
\item \label{QIlinLC}
For each  $f $,   if $g,h \in B(f),  \ a,b \in \mathbb{R}$ then $\rho(a g + bh) = a \rho(g) + b \rho(h)$.
Here  $B(f) =  \{ \phi \circ f : \,  \phi \in C(\overline{f(X)}) \} $  (with $ \phi(0) = 0 $ if $X$ is noncompact) is a subalgebra generated by $f$. 
\end{enumerate}
\end{definition}

\noindent
Note that $B(f)$ is the smallest closed subalgebra of $C_0(X)$ containing $f$ (resp, $f$ and constants if $X$ is compact), see \cite[L. 1.4]{Butler:QLFLC}. 

For a functional $\rho$ on $C_0(X)$ we consider $\| \rho \| =  \sup \{ | \rho(f) | : \  \| f \| \le 1 \} $ and we say $\rho$ is bounded if $\| \rho \| < \infty$. 

\begin{remark} \label{RemBRT}
Let $X$ be a LC space. 
There is an order-preserving bijection between finite deficient topological measures and bounded p-conic quasi-linear functionals. 
There is an order-preserving isomorphism between finite 
topological measures on $X$ and quasi-linear functionals  
of finite norm, and $\mu$ is a measure iff the 
corresponding functional is linear (see \cite[Th. 8.7]{Butler:ReprDTM}, \cite[Th. 3.9]{Alf:ReprTh}, \cite[Th. 15]{Svistula:DTM}, 
and \cite[Th. 3.9]{Butler:QLFLC}).
We outline the correspondence.
\begin{enumerate} [leftmargin=0.2in, label=(\Roman*),ref=(\Roman*)] 
\item \label{prt1}
Given a finite deficient  topological measure $\mu$ on $X$ and $f \in C_b(X)$, define functions on $\mathbb{R}$:
$$ R_1 (t) = R_{1, \mu, f} (t) =  \mu(f^{-1} ((t, \infty) )), \ \ \ \ \   R_2 (t) =  R_{2,  \mu, f} (t) =\mu(f^{-1} ([t, \infty) )). $$
Let $r=r_{f, \mu} = r_f$ be the Lebesque-Stieltjes measure associated with $-R_1$, a regular Borel measure on $ \mathbb{R}$. 
The $ supp \ r \subseteq \overline{f(X)}$.
We define a functional on $C_b(X)$ (in particular, on  $C_0(X)$):
\begin{align} \label{rfform}
\mathcal{R} (f) & = \int _{\mathbb{R}}  id \, dr = \int_{[a,b]} id \, dr  =   \int_a^b R_1 (t) dt + a \mu(X)  =  \int_a^b R_2 (t) dt + a \mu(X), 
\end{align}
where $[a,b]$ is any interval containing $f(X)$.
If $f(X) \subseteq [0,b]$ we have:
\begin{align} \label{rfformp}
 \mathcal{R} (f) = \int_{[0,b]}  id \, dr  =   \int_0^b R_1 (t) dt =   \int_0^b R_2 (t) dt.
\end{align}
We say $\mathcal{R}$  is a quasi-integral (with respect to $ \mu$) and write:
\begin{align*} 
\int_X f \, d\mu = \mathcal{R}(f) = \mathcal{R}_{\mu} (f) =  \int _{\mathbb{R}}  id \, dr.
\end{align*}
If $\mu$ is a topological measure we may write $m = m_{f, \mu} = m_f$ instead of $r$.
\item  \label{RHOsvva}
Functional $\mathcal{R} $ is nonlinear. 
By \cite[L. 7.7,  Th. 7.10, L. 3.6, L. 7.12, Th. 8.7]{Butler:ReprDTM}  and  \cite[Cor. 4.10]{Butler:QLFLC} we have:
\begin{enumerate}[leftmargin=*]
\item
$\mathcal{R} (f) $ is positive-homogeneous, i.e. $\mathcal{R} (cf)  = c \mathcal{R} (f) $ for $c \ge 0$, $ f \in C_b(X)$. 
\item
$\mathcal{R} (0) =0$. 
\item 
$\mathcal{R}$ is monotone, i.e. if $ f \le g$  then $\mathcal{R} (f) \le \mathcal{R} (g)$ for $f, g \in C_b(X)$.
\item
$ \mu(X)  \cdot \inf_{x \in X} f(x)  \le \mathcal{R}(f)  \le \mu(X) \cdot \sup _{x \in X} f(x) $ for $f \in C_b(X)$. 
In particular, 
\begin{align} \label{estin}
  | \int_X f \, d\mu | \le \|f \| \mu(X).
\end{align}
\item
(orthogonal additivity) If $f g = 0$, where  $f, g \in C_b^+(X)$,  then $\mathcal{R} (f+g) = \mathcal{R} (f) + \mathcal{R} (g)$; \\
if $f g = 0 $, where $f \ge 0, g \le 0$, $f, g \in C_0(X)$, then $\mathcal{R} (f+g) = \mathcal{R} (f) + \mathcal{R} (g)$.
\item 
If $ \mu$ is a finite topological measure,  $f, g \in C_0(X)$ then
\begin{align} \label{intsplit}
 \int_X f \, d\mu = \int_X f^+ \, d\mu - \int_X f^- \, d\mu, \ \  \ \ \ \ \ 
|\int_X f \, d\mu - \int_X g \, d\mu | \le  2 \| f - g \| \, \mu(X).
\end{align}

\end{enumerate}
If $X$ is compact and  $\mu$ is a deficient topological measure, then 
$ |  \int_X f \, d\mu - \int_X g \, d\mu | \le \mu(X)  \| f - g \|. $
If $ \mu$ is a topological measure, the functional $ \mathcal{R}$ is a quasi-linear functional. 
If $ \mu$ is a deficient topological measure, the functional $ \mathcal{R}$ is a p-conic quasi-linear functional (which is enough to consider on $C_0^+(X)$).

\item \label{mrDTM} 
A functional $\rho$ with values in $[ -\infty, \infty]$ (assuming at most one of $\infty, - \infty$) and $| \rho(0) | < \infty$ 
is called a d-functional if   
on nonnegative functions it is positive-homogeneous, monotone, and orthogonally additive, i.e. for $f, g \in D(\rho)$ (the domain of $ \rho$) we have: 
(d1) $f \ge 0, \ a > 0  \Longrightarrow  \rho (a f) = a \rho(f)$; 
(d2) $0 \le  g \le f \Longrightarrow  \rho(g) \le \rho(f) $;
(d3) $f \cdot g = 0, f,g \ge 0  \Longrightarrow  \rho(f + g) = \rho(f) + \rho(g)$. 

Let  $\rho$ be a d-functional with $  C_c^+(X) \subseteq D(\rho) \subseteq C_b(X)$. 
In particular, we may take a bounded (p-conic) quasi-linear functional $ \mathcal{R}$ on $  C_0^+(X)$. 
The corresponding
deficient topological measure $ \mu = \mu_{\rho}$ is given as follows: 

If $U$ is open, $ \mu_{\rho}(U) = \sup\{ \rho(f): \,  f \in C_c(X), 0\le f \le 1,  supp \, f\subseteq U  \}$,

if $F$ is closed, $ \mu_{\rho}(F) = \inf \{ \mu_{\rho}(U): \,  F \subseteq U,  U \in \mathscr{O}(X) \}$, 

if $K$ is compact, $ \mu_{\rho}(K) = \inf \{ \rho(g): \,   g \in C_c(X), g \ge 1_K \}  
= \inf \{ \rho(g): \   g \in C_c(X), 1_K \le g \le 1 \}. $
\end{enumerate}
If $\| \rho\| < \infty$ then the corresponding deficient topological measure is finite.  
If given a finite deficient topological measure $\mu$, we obtain $ \mathcal R$, and then $\mu_{ \mathcal R}$, then $ \mu = \mu_{ \mathcal R}$.
A bounded (p-conic) quasi-linear functional $ \mathcal{R}$  has the form given in part \ref{prt1} with $ \mu = \mu_{\mathcal{R}}$. 

For finite deficient topological measures $ \mu, \nu$
\begin{align} \label{EqByInt}
\int f \, d\mu = \int f \, d\nu  \mbox{    for all    }   f \in C_c^+(X)  \ \ \  \Longrightarrow \ \ \  \mu = \nu.
\end{align}

For a finite deficient topological measure $ \mu$ and open $U$ we also have:
\begin{align}  \label{muUprosche}
  \mu(U) =  \sup\{ \int f \, d\mu: \  f \in C_c(X), 0 \le f \le 1_U  \}, 
\end{align} 
since for such $f$ by \cite[Th. 49, Th. 36(b4)]{Butler:Integration}  $\rho(f) = \int_X f \, d\mu = \int_{Coz(f)} f \, d\mu \le \mu(U)$.
\end{remark}

We would like to give some examples. 

\begin{definition}
A set $A \subseteq X$ is called bounded if $\overline A$ is compact. 
If $X$ is LC, noncompact, a set $A$ is solid if $A$ is  connected, and $X \setminus A$ has only unbounded connected components.
If $X$ is compact, a set $A$ is solid if $A$ and $X \setminus A$ are connected.
\end{definition}

Let  $\mathscr{K}_{s}(X)$ and $\mathscr{O}_s^*(X)$ denote the collections of compact solid set and bounded open solid sets in $X$, respectively.  
Let  $ \mathscr{A}_{s}^{*}(X) = \mathscr{K}_{s}(X) \cup \mathscr{O}_s^*(X)$.
 
\begin{definition} \label{DeSSFLC}
A function $ \lambda: \mathscr{A}_{s}^{*}(X) \rightarrow [0, \infty) $ is a solid-set function on $X$ if
\begin{enumerate}[label=(s\arabic*),ref=(s\arabic*)]
\item \label{superadd}
$ \sum_{i=1}^n \lambda(C_i) \le \lambda(C)$ whenever $\bigsqcup_{i=1}^n C_i \subseteq C,  \ \  C, C_i \in \mathscr{K}_{s}(X)$; 
\item \label{regul}
$ \lambda(U) = \sup \{ \lambda(K): \ K \subseteq U , \ K \in \mathscr{K}_{s}(X) \}$ for $U \in \mathscr{O}_{s}^{*}(X)$; 
\item \label{regulo}
$ \lambda(K) = \inf \{ \lambda(U) : \  K \subseteq U, \ U \in \mathscr{O}_{s}^{*}(X) \}$ for $ K  \in \mathscr{K}_{s}(X)$; 
\item  \label{solidparti}
$ \lambda(A) = \sum_{i=1}^n \lambda (A_i)$ whenever $A = \bigsqcup_{i=1}^n A_i, \ \ A , A_i  \in \mathscr{A}_{s}^{*}(X)$.
\end{enumerate}
\end{definition}

\begin{remark}
Many examples of topological measures that are not measures are obtained in the following way. Define a solid-set function on 
a LC connected, locally connected space. A solid set function extends to a unique topological measure. 
See \cite[Def. 2.3, Th. 5.1]{Aarnes:ConstructionPaper}, \cite[Def. 6.1, Th. 10.7]{Butler:TMLCconstr}.
This method is very convenient when $X$ is a compact connected, locally connected space with genus 0 
(i.e. $X$ can not be a disjoint union of more than two nonempty solid sets), and such a space is called a q-space.
For a q-space, in part \ref{solidparti} of Definition \ref{DeSSFLC}  (typically, the hardest to verify) one only needs to check that
$\lambda(X) = \lambda(A) + \lambda(X \setminus A)$ for a solid set $A$. For a noncompact LC space whose one-point compactification
has genus 0 this method is even simpler, for  part \ref{solidparti} of Definition \ref{DeSSFLC} holds trivially. 
For more information about solid sets, solid-set functions, equivalent definitions of a solid-set function, genus,
and more references see \cite[Rem. 6.3, Sect. 11, Sect. 12, L. 15.2]{Butler:TMLCconstr}.
\end{remark}

\begin{example} \label{ExDan2pt}
Suppose that  $ \lambda$ is the Lebesgue measure on $X = \mathbb{R}^2$,  and the set $P$ consists of points
$p_1 = (0,0)$ and $p_2 = (2,0)$.
For each bounded open solid or compact solid set $A$ let $ \nu(A) = 0$ if $A \cap P = \emptyset$,   
$ \nu(A) = \lambda(A) $ if $A$ contains one point from $P$, and 
$ \nu(A) = 2 \lambda(X)$ if $A$ contains both points from $P$.
Then $\nu$ is a solid-set function (see \cite[Ex. 15.5]{Butler:TMLCconstr}), and  $\nu$ extends to a unique topological measure on $X$. 
Let $K_i$ be the closed ball of radius $1$ centered at $p_i$ for $i=1,2$. Then 
$K_1, K_2$ and $ C= K_1 \cup K_2$ are compact solid sets, $\nu(K_1) = \nu(K_2) = \pi, \,  \nu(C) = 4 \pi$. Since 
$\nu$ is not subadditive, it is not a measure.  The quasi-linear functional corresponding to $ \nu$ is not linear. 
\end{example}

\begin{example} \label{nvssf}
Let  $X = \mathbb{R}^2$ or a square, $n$ be a natural number, and let $P$ be a set of distinct $2n+1$ points.
For each $A  \in \mathscr{A}_{s}^{*}(X)$ let $ \nu(A) = i/n$ if $ A$ contains  $2i$ or $2i+1$ points from $P$.
Then $ \nu$ is a solid-set function, and it extends to a unique topological measure on $X$ 
that assumes values $0, 1/n, \ldots, 1$. 
See  \cite[Ex. 2.1]{Aarnes:Pure},  \cite[Ex. 2.5]{AarnesRustad}, \cite[Ex. 4.14, 4.15]{QfunctionsEtm}, 
and \cite[Ex. 15.9]{Butler:TMLCconstr}.
This topological measure is not subadditive. 
(When $X$ is the square and $n=3$, it is easy to represent $X = A_1 \cup A_2 \cup A_3$,
where each $A_i$  is a compact solid set containing one point from $P$. Then $\nu(A_i) =0$ for $i=1,2,3$, while $\nu(X) = 1$.) 
Since $\nu$ is not subbadditive, it is not a measure, and the corresponding quasi-linear functional  $\rho$ is not linear. 
If we take $P =\{p, p, t \} $ or $ \{ p, p, p \}$ then the resulting topological measure is  
the point mass at $p$.
In \cite[Ex. 4.13]{Butler:QLFLC} for $n=5$ we show that there are $f,g \ge 0$ such that $ \rho(f+g) \neq \rho(f) + \rho(g)$. 
If $X$ is LC, noncompact, for the functional $\rho$ we consider
a new functional $ \rho_g$ defined by $\rho_g(f) = \rho(gf)$, where $g \ge 0$. 
The functional $\rho_g$ corresponds to a deficient topological measure 
obtained by integrating $g$ over closed and open sets with respect to $\nu$. We can choose $ g \ge 0$ or $ g >0$ so that  
$\rho_g$ is no longer linear on singly generated subalgebras, but only linear on singly generated cones. 
See \cite[Ex. 35, Th. 43]{Butler:Integration} for details.
\end{example}

\begin{example}[Aarnes circle topological measure] \label{Aatm}
Let $X$ be the unit disk in $ \mathbb{R}^2$ and $B$ be the boundary of $X.$
Fix a point $p$ in the interior of the circle.
Define $\mu $ on solid sets as follows:
$\mu (A) = 1$ if i) $B \subset A$ or
ii) $ p \in A $ and $A \cap B \ne  \emptyset$.
Otherwise $ \mu(A) = 0 $.
Then $ \mu $ is a solid-set function, and it extends to a simple topological measure on $X$.
Let $A_1$ be a closed solid set which is an arc that is a proper subset of $B$, 
$A_2$ be a closed solid set that is the closure of $B \setminus A_1$, 
and $A_3 = X \setminus B$ be
an open solid subset of $X$. Then 
$X =  A_1 \cup A_2 \cup A_3, \  \mu(X) = 1 $, but 
$ \  \mu (A_1) + \mu(A_2) + \mu(A_3) = 0$.
Since $ \mu$ is not subadditive,  it is not a measure, i.e. $\mu$ is simple, but not a point mass. 
When $X = \mathbb{R}^2$, given a closed ball $B(p, \epsilon)$ we may define a set function $\mu_{p, \epsilon}$ on bounded open solid and compact solid sets in 
the same way as above. Using \cite[Def. 6.1, Th. 3.10]{Butler:TMLCconstr} it is easy to see that  $\mu_{p, \epsilon}$ is a solid-set function, 
so it gives a topological 
measure on $X$. 
\end{example}

\begin{example} \label{basicExDTM}
Let $X$ be a LC space, and let $D$ be a connected compact subset of $X$. Define a set function 
$\nu$ on $\mathscr{O}(X) \cup \mathscr{C}(X)$  by setting $\nu(A) = 1$ if $ D \subseteq A$ and $\nu(A) = 0$ otherwise. 
If $D$ has more than one element, then $\nu$ is a deficient topological measure, but not 
a topological measure. See \cite[Ex. 6.1]{Butler:DTMLC} and \cite[Ex. 1, p.729]{Svistula:DTM} for details.
\end{example}

For  more examples of topological measures and quasi-integrals on LC spaces
see \cite{Butler:TechniqLC}, \cite{Butler:QLFLC}, and the last section of \cite{Butler:TMLCconstr}.
For more examples of deficient topological measures see  \cite{Butler:DTMLC} and \cite{Svistula:DTM}. 

We denote by $ \mathbf{DTM}(X)$ (respectively, $ \mathbf{TM}(X)$) the space of all finite deficient topological measures 
(resp., of all finite topological measures) on $X$.  

\begin{definition} \label{defwk}
The weak topology on $ \mathbf{DTM}(X)$ is the coarsest (weakest) topology for which maps 
$ \mu \longmapsto \mathcal{R}_{\mu} (f), f \in C_0^+(X) $ are continuous.
\end{definition}

\noindent
Unless stated otherwise, all collections of (deficient) topological measures in this paper are assumed to be endowed with the weak topology. 

\begin{remark} \label{rmwkbase}
The basic neighborhoods for the weak topology have the form 
$$N(\nu, f_1, \ldots, f_n, \epsilon) = \{ \mu \in \mathbf{DTM}(X): \ |\mathcal{R}_{\mu}(f_i) - \mathcal{R}_{\nu} (f_i) | < \epsilon, \, f_i \in C_0(X) \},$$
$i=1, \ldots, n  $.
Here we may also take $f_i \in C_0^+(X)$ (see  \cite[Def. 2.1, T. 2.1]{Butler:WkConv} and note that the last two parts of the proof of 
\cite[T. 2.1]{Butler:WkConv}  work for $f \in C_0(X)$).   

Let $\mu_{\alpha}$ be a net in $\mathbf{DTM}(X)$, $\mu \in \mathbf{DTM}(X)$. 
The net $ \mu_{\alpha} $ converges weakly to $ \mu$ (and we write $ \mu_{\alpha} \Longrightarrow \mu$)  
iff  $\mathcal{R}_{\mu_{\alpha}}  (f) \rightarrow  \mathcal{R}_{\mu}  (f)$, i.e. 
$\int f \, d \mu_{\alpha} \rightarrow \int f \, d \mu$  for every $ f \in C_0(X)$ (or every $ f \in C_0^+(X)$).

By \cite[Th. 2.3]{Butler:WkConv}
the weak topology on $\mathbf{DTM}(X)$ is also given by basic neighborhoods of the form 
\begin{align}   
\label{WkNbd}
 W( \nu, U_1, \ldots, U_n, C_1, \ldots, C_m, \epsilon) = \{ \mu: \ \mu(U_i) > \nu(U_i) - \epsilon, \ \mu(C_j) < \nu(C_j) + \epsilon\},  
\end{align} 
where $ i=1, \ldots, n, \ j=1, \ldots m; \nu, \mu  \in \mathbf{DTM}(X);  U_i \in \mathscr{O}(X), C_j \in \mathscr{K}(X); \epsilon >0; n, m \in \mathbb{N}$.
Since the proof of  \cite[Th. 2.3, Th. 2.1]{Butler:WkConv} works just as well for bounded open sets,  in 
(\ref{WkNbd}) we may take $U_i$ to be bounded open sets. 
\end{remark}

\begin{remark} \label{wkwk*}
Our definition of weak convergence corresponds to one used in probability theory. It is the same as a functional analytical definition of  $wk^*$ convergence
on $\mathbf{DTM}(X)$ (respectively, on $\mathbf{TM}(X)$), which is justified by the fact that this topology agrees with the weak$^*$ topology 
induced by p-conic quasi-linear functionals (resp., quasi-linear functionals). 
\end{remark}

\begin{definition} \label{unifbdv}
Let $X$ be LC.
A family $\mathcal{M}  \subseteq \mathbf{DTM}(X)$ is uniformly bounded in variation if there is a positive constant $M$ such that $ \| \mu \| \le M$ 
for each $\mu \in \mathcal{M} $. 
\end{definition}

One important uniformly bounded in variation family is the collection of all (deficient) topological measures satisfying condition $\mu(X) = 1$.

\begin{remark} \label{McompT2} 
By  \cite[Th. 8.7]{Butler:ReprDTM},  $ \mathbf{DTM}(X)$ (respectively, $ \mathbf{TM}(X)$)  is homeomorphic 
to the space of bounded p-conic quasi-linear (resp.,  bounded quasi-linear) functionals endowed with pointwise convergence. 
By Theorem \cite[Th. 2.4]{Butler:WkConv}  $ \mathbf{DTM}(X)$ is Hausdorff.
By Theorem \cite[Th. 2.4, L. 5.3]{Butler:WkConv},  
a uniformly bounded in variation family of (deficient) topological measures  $\{ \mu: \mu(X) \le c \}, c > 0$ is compact Hausdorff.
\end{remark}

\section{Image transformations} \label{SectIT}

\begin{definition}
A function between topological spaces is proper if inverse images of compact subsets are compact.
\end{definition}

If $u :Y \rightarrow X$ is a continuous proper map, then a (deficient) topological measure $ \nu$ on $Y$ 
induces a (deficient) topological measure $ \nu \circ u^{-1}: A \mapsto \nu(u^{-1}(A))$ on $X$ (see Remark \ref{invfunDTM} below, 
\cite[Pr. 5.1]{Butler:DTMLC},  \cite[Ex. 14]{AarnesJohansenRustad}).
In this section we study image transformations and d-image transformations, which generalize this idea. 
They move sets in one space to sets in another space so that (deficient) 
topological measures on the second space induce (deficient) topological measures on the first one.  

\begin{definition} \label{IT}
Let $X$ and $Y$ be LC spaces.
A map $q: \mathscr{O}(X) \cup \mathscr{K}(X) \longrightarrow \mathscr{O}(Y)  \cup  \mathscr{K}(Y)$ such that
\begin{enumerate}[label=(IT\arabic*),ref=(IT\arabic*)]
\item \label{IT1}
$q(U) \in \mathscr{O}(Y)$  for $ U  \in \mathscr{O}(X)$, and $ q(K) \in  \mathscr{K}(Y)$ for $ K \in\mathscr{K}(X)$;
\item \label{IT3}
$q(U) =  \bigcup \{q(K): K \subseteq U, K \in \mathscr{K}(X) \}$ for $ U \in \mathscr{O}(X)$; 
\item \label{IT4}
$ q(K) = \bigcap \{ q(U): K \subseteq U, U \in \mathscr{O}(X) \}$ for $K \in \mathscr{K}(X)$;
\item \label{IT2} 
$q( A \sqcup B)  = q(A) \sqcup q(B)$;   
\end{enumerate}
is called an image transformation (from X to Y) if  $A, B, A\sqcup B \in \mathscr{O}(X) \cup \mathscr{K}(X)$;  $q$ is called a d-image transformation if $A, B \in \mathscr{K}(X)$.
If $X = Y$ we call $q$ a (d-) image transformation on $X$.
\end{definition}

To avoid triviality, we consider (d-) image transformations that are not identically $ \emptyset$. 

\begin{lemma} \label{ITsvva}
Let $X$ and $Y$ be LC spaces.
Suppose $q: \mathscr{O}(X) \cup \mathscr{K}(X) \longrightarrow \mathscr{O}(Y)  \cup  \mathscr{K}(Y)$ is a (d-) image transformation. 
The following holds:  
\begin{enumerate}[leftmargin=0.25in, label=(\roman*),ref=(\roman*)]
\item \label{ITsv0}
$q(\emptyset) = \emptyset$.
\item \label{ITsv1dop}
$q$ is 1-1 if and only if $q(\{x\}) \neq \emptyset $ for each $x \in X$.
\item \label{ITsv1}
$q$ is  monotone, i.e. if $ A \subseteq B$, $A, B \in  \mathscr{O}(X) \cup \mathscr{K}(X) $  then $ q(A) \subseteq q(B)$.
\item \label{ITsv2}
If $ K \subseteq q(U), K \in  \mathscr{K}(Y), U \in \mathscr{O}(X)$ then there exists $ C \in \mathscr{K}(X)$ such that $ C \subseteq U$ and $ K \subseteq q(C) \subseteq q(U)$.
\item \label{ITsv3}
If a net $ U_{t} \nearrow U $, where $U_t, U$ are open sets, then $q(U_{t}) \nearrow q(U)$.
In particuar, $q(U) = \bigcup\{q(V) :  V \subseteq U, V \in \mathscr{O}(X) \}$ for $U \in \mathscr{O}(X)$.
\item \label{ITsv4}
If a net $K_{\alpha} \searrow K$, where $ K_{\alpha},  K$ are compact sets, then $q(K_{\alpha}) \searrow q(K)$.
In particular, $q(K) = \bigcap\{ q(C): K \subseteq C, C \in \mathscr{K}(X)\}$ for $ K \in \mathscr{K}(X)$.
\item \label{ITsv5}
If  $q(K) \subseteq W, K \in \mathscr{K}(X), W \in  \mathscr{O}(Y)$ then there exists $U \in \mathscr{O}(X)$ such that $ K \subseteq U$ and $ q(K) \subseteq q(U) \subseteq q(\overline U) \subseteq W$.
\end{enumerate}
\end{lemma}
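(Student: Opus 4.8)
I'd prove the parts roughly in the order listed, since later parts lean on earlier ones. First, part \ref{ITsv0}: apply \ref{IT2} with $A = B = \emptyset$ to get $q(\emptyset) = q(\emptyset)\sqcup q(\emptyset)$; since a set is disjoint from itself only when empty, $q(\emptyset) = \emptyset$. For \ref{ITsv1}, monotonicity: given $A \subseteq B$ with $A,B \in \mathscr{O}(X)\cup\mathscr{K}(X)$, I'd want to write $B = A \sqcup (B\setminus A)$ — but $B\setminus A$ need not be open or closed. The fix is the standard one: if $A$ is compact and $B$ open, pick (via Lemma \ref{easyLeLC}) a compact $C$ with $A \subseteq \mathrm{int}\,C \subseteq C \subseteq B$ and write $B = \mathrm{int}\,C \sqcup (B\setminus C)$... still not quite a disjoint decomposition into admissible sets. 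Better: I'd prove monotonicity first for the "compatible" cases where a disjoint complement exists, then bootstrap using \ref{IT3}/\ref{IT4}. Concretely, for $K \subseteq K'$ both compact in a LC space, there's an open $U$ with $K' \subseteq U$ and $\overline U$ compact, and $K' \setminus K$ is not closed, so instead I use \ref{IT4}: $q(K) = \bigcap\{q(V): K\subseteq V \text{ open}\}$ and note every open $V \supseteq K'$ also contains $K$, giving $q(K)\subseteq q(V)$ hence $q(K) \subseteq \bigcap\{q(V): K'\subseteq V\} = q(K')$, provided I already know $q(K)\subseteq q(V)$ for $K$ compact inside open $V$. That last inclusion: write $V = K \sqcup (V\setminus K)$ where $V \setminus K$ is open, so by \ref{IT2} $q(V) = q(K)\sqcup q(V\setminus K) \supseteq q(K)$. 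Then the open–open, compact–open, and remaining cases follow similarly via \ref{IT2}, \ref{IT3}, \ref{IT4}. This monotonicity argument is the main obstacle — everything downstream is comparatively mechanical.

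Next \ref{ITsv1dop}: if $q$ is 1-1 then $q(\{x\}) \neq q(\emptyset) = \emptyset$. Conversely, suppose $q(\{x\})\neq\emptyset$ for all $x$; I need $q$ injective on $\mathscr{O}(X)\cup\mathscr{K}(X)$. If $A \subsetneq B$ are admissible, pick $x \in B\setminus A$; using monotonicity and \ref{IT2}-type splitting off a neighborhood of $x$ disjoint from $A$ (shrink via Lemma \ref{easyLeLC} so the pieces are admissible), I get $q(A) \subseteq q(B \setminus \{\text{nbhd}\})$ which is disjoint from $q(\{x\})\neq\emptyset \subseteq q(B)$, so $q(A) \neq q(B)$. (One must handle the case $A$ open, $B$ closed etc., but $\mathscr{O}\cup\mathscr{K}$ with the two adjacency conditions \ref{IT3},\ref{IT4} ties things together.)

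For the continuity-type statements: \ref{ITsv2} — given compact $K \subseteq q(U)$ and $U = \bigcup\{q(C): C \subseteq U \text{ compact}\}$ by \ref{IT3}, the sets $q(C)$ need not be open, so I first use \ref{ITsv3}. Here's the order I'd actually use: prove \ref{ITsv3} first by \ref{IT3} plus monotonicity (given $U_t \nearrow U$, clearly $\bigcup_t q(U_t) \subseteq q(U)$ by monotonicity; conversely any compact $C \subseteq U$ lies in some $U_t$ by compactness, so $q(C) \subseteq q(U_t)$, and taking the union over such $C$ and invoking \ref{IT3} gives $q(U) \subseteq \bigcup_t q(U_t)$); then \ref{ITsv2} follows because the open sets $q(V)$, $V$ open with $\overline V$ compact and $V \subseteq U$, form an increasing cover (by Lemma \ref{easyLeLC}) of $q(U) \supseteq K$, so finitely many — hence one — cover $K$; take $C = \overline V$. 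Symmetrically \ref{ITsv4} follows from \ref{IT4} and monotonicity: $q(K) \subseteq \bigcap_\alpha q(K_\alpha)$ trivially; for the reverse, any open $U \supseteq K$ eventually contains $K_\alpha$ (since $K_\alpha \searrow K$ in a LC, hence regular, space — use that $K_{\alpha_0}\setminus U$ is a decreasing net of compacts with empty intersection), so $\bigcap_\alpha q(K_\alpha) \subseteq q(U)$, and intersecting over all such $U$ and using \ref{IT4} gives $\bigcap_\alpha q(K_\alpha) \subseteq q(K)$. Finally \ref{ITsv5}: given $q(K) \subseteq W$ open, apply \ref{ITsv4} in the form $q(K) = \bigcap\{q(\overline V): K \subseteq V, \overline V \text{ compact}\}$; since $W^c$ is closed and each $q(\overline V)$ is compact, a finite-intersection / compactness argument gives a single $\overline V$ with $q(\overline V) \subseteq W$, and then $q(K) \subseteq q(V) \subseteq q(\overline V) \subseteq W$ by monotonicity. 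The recurring technical nuisance throughout is replacing non-admissible set-differences by admissible open/compact sandwiches via Lemma \ref{easyLeLC}; once monotonicity is in hand that is routine.
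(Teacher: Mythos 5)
Your plan is essentially the paper's: monotonicity from the axioms, part \ref{ITsv1dop} via disjointness of $q(\{x\})$ from $q(A)$, and parts \ref{ITsv2}--\ref{ITsv5} via \ref{IT3}, \ref{IT4}, Lemma \ref{easyLeLC} and directedness/compactness (your only structural deviation -- proving \ref{ITsv3} first and deducing \ref{ITsv2} from it, whereas the paper proves \ref{ITsv2} directly from \ref{IT3} and Lemma \ref{easyLeLC} and then uses it for \ref{ITsv3} -- is harmless, provided you take the directed family of open $V$ with $\overline V$ compact and $\overline V\subseteq U$, not merely $V\subseteq U$, so that $C=\overline V$ lands inside $U$). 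However, two steps as written would fail. The pivotal one is your proof of $q(K)\subseteq q(V)$ for compact $K$ inside open $V$, obtained by writing $V=K\sqcup(V\setminus K)$ and invoking \ref{IT2}. That is fine for image transformations, but the lemma also covers d-image transformations, for which \ref{IT2} is assumed only when \emph{both} sets are compact; $V\setminus K$ is open, so the splitting is not available, and since every other case of monotonicity (and everything downstream) is routed through this inclusion, the d- case of the entire lemma is left unproved. The fix is immediate and is what the paper in effect uses: \ref{IT3} gives $q(V)=\bigcup\{q(C): C\subseteq V,\ C\in\mathscr{K}(X)\}\supseteq q(K)$ with no additivity at all; the compact-in-compact and open-in-compact cases then follow via \ref{IT4} as you indicate.

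The second problem is in your converse for \ref{ITsv1dop}: you split off ``a neighborhood of $x$ disjoint from $A$,'' but when $A$ is open and $x\in\overline A\setminus A$ no such neighborhood exists, so the device breaks down in precisely one of the cases you defer in your parenthetical. It is also unnecessary: since $x\notin A$, show directly that $q(\{x\})\cap q(A)=\emptyset$ -- if $A$ is compact, $\{x\}\sqcup A$ is compact and \ref{IT2} (valid for d-image transformations) gives disjointness; if $A$ is open, $q(\{x\})$ is disjoint from $q(C)$ for every compact $C\subseteq A$ by the same token, and \ref{IT3} finishes. Combined with $q(\{x\})\subseteq q(B)$ (monotonicity) and $q(\{x\})\neq\emptyset$, this gives $q(A)\neq q(B)$; note also that injectivity requires arbitrary admissible $A\neq B$, not only $A\subsetneq B$, so pick $x$ in the symmetric difference. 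With these two repairs your argument matches the paper's proof.
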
 

\begin{proof}
(ii). Suppose $q(\{x\}) \neq \emptyset$ for all $x$. Let $ A \neq B, \, A, B \in \mathscr{O}(X) \cup \mathscr{K}(X)$ and $x \in A \setminus B$. 
If $B$ is compact then $ \{x \} \cap B  = \emptyset \Longrightarrow q(\{x\}) \cap q(B) = \emptyset$. The same is true if $B$ is open, 
since  $q(\{x\})$ is disjoint from $q(K)$ for any compact $K \subseteq B$. Thus, $ q(\{x\}) \subseteq q(A) \setminus q(B)$, and $q(A)  \neq q(B)$. 
Conversely,  we may assume that $X$ has more than 1 element, 
and if $q(\{x\}) = \emptyset$ for some $x$ then $q(\{y\}) = q(\{x\} \sqcup \{y\})$, so $q$ is not 1-1.  \\
(iii). Monotonicity is easy to see when $A, B$ are are both open, both compact, or $A  \in \mathscr{K}(X), B \in \mathscr{O}(X)$. 
If $V \subseteq K$, $ V \in \mathscr{O}(X), K \in \mathscr{K}(X)$ then for each open $U \supset K$ we have $q(V) \subseteq q(U)$, and by \ref{IT4} then $q(V) \subseteq q(K)$.\\
(iv). For each  compact $D \subseteq U$ by Lemma \ref{easyLeLC} there is an open set $V$  with compact closure such that $D \subseteq V \subseteq \overline V \subseteq U$. 
Then $q(U) = \bigcup_{D \subseteq U}  q(D)  = \bigcup_{\overline{V} \subseteq U} q(V)$. 
Say, $q(V_1),  \ldots, q(V_n) $ cover $K$. Let $V= V_1 \cup \ldots \cup V_n$.
Then 
$K \subseteq  \bigcup_{i=1}^n  q(V_i) \subseteq q(\overline V)) \subseteq q(U), $
and we may take $C = \overline V$. \\
(v). $\{ q(U_t)\}$ is an increasing net of open sets,  $ q(U_t) \nearrow \bigcup q(U_t)$, and $\bigcup_t  q(U_t) \subseteq q(U)$.
Let $y \in q(U)$. By part \ref{ITsv2} choose compact $C \subseteq U$ with $y \in q(C) \subseteq q(U)$. Say, $U_{t}$ majorizes 
finitely many sets $U_s$ that cover $C$.  Then $y \in q(C) \subseteq  q(U_t)$, so 
$ q(U) \subseteq \bigcup_t  q(U_t)$. \\
(vi). $ \{ q(K_t) \}$ is a decreasing net of compact sets, $ q(K_t) \searrow \bigcap_t q(K_t)$, and  $\bigcap_t q(K_t) \supseteq q(K)$.
Let $U $ be an open set containing $K$. Then there exists $t_0$ such that $ K_t \subseteq U$ for $t \ge t_0$, and  
$ \bigcap_t q(K_t) \subseteq  \bigcap_{t \ge t_0} q(K_t) \subseteq q(U)$. This holds for any $U$ containing $K$, so 
by \ref{IT4}  $ \bigcap_t q(K_t) \subseteq q(K) $. \\
(vii). For $K$ there exists an open set $V$ with compact closure such that $K \subseteq V \subseteq \overline V \subseteq X$.  
Ordered by reverse inclusion,  the 
decreasing net of such sets $ \overline{V_t} \searrow K$, so by part \ref{ITsv4} $q(\overline{V_t} ) \searrow q(K)$. Since $q(K) \subseteq W$,
there is $ t$ such that $q(\overline{V_t})  \subseteq W$. Take $ U = {V_t}$.
\end{proof}

\begin{remark} 
Using Lemma \ref{ITsvva} it is easy to check that the composition of two (d-) image transformations is a (d-) image transformation.  
\end{remark}

\noindent
When $X$ and $Y$ are compact, the definition of an image transformation may be stated as follows:

\begin{definition} \label{ITs}
Let $X$ and $Y$ be compact spaces.
A map $q: \mathscr{O}(X) \cup \mathscr{C}(X) \longrightarrow \mathscr{O}(Y)  \cup  \mathscr{C}(Y)$ is called an image transformation (from X to Y) if 
\begin{enumerate}[label=(\alph*),ref=(\alph*)]
\item \label{IT1s}
$q(U) \in \mathscr{O}(Y)$  for $ U  \in \mathscr{O}(X)$;
\item \label{IT4s}
$q(U) =  \bigcup \{q(K): K \subseteq U, K \in \mathscr{C}(X) \}$ for $ U \in \mathscr{O}(X)$;
\item \label{IT3s}
$q(K) = q(X) \setminus q(X \setminus K)$ for $K \in \mathscr{C}(X)$;
\item \label{IT2s}
$q( U \sqcup V)  = q(U) \sqcup q(V)$ for $U, V \in \mathscr{O}(X)$.
\end{enumerate}
\end{definition}  

\begin{remark}
We introduce the concept of a d-image transformation and extend the concept of an image transformation to LC spaces.
Image transformations between compact spaces 
first appeared in \cite{Aarnes:ITfirst} and were later studied in many papers,
the vast majority of which deals with compact spaces.  Unlike previous works, we don't require $q(X) = Y$ in the definition of an image transformation
(although we show it later in Theorem \ref{ITqh}).
Various previous works have different definitions of an image transformation, but 
with Lemma \ref{ITsvva} (and sometimes easy topological results, such as Lemma \ref{easyLeLC})
we see that our definition of an image transformation 
is equivalent to definitions of an image transformation used, for example,  in  \cite{AarnesGrubb},
\cite{AarnesJohansenRustad},  \cite{OrjanAlf:HomSimpleTM}, \cite{Pedersen}, \cite{AlfImTrans}, \cite{Taraldsen:RegQHinLC}, 
and  \cite{Taraldsen:ITQMonLC}.
In \cite{AarnesTaraldsen}, \cite{Taraldsen:RegQHinLC},  and \cite{Taraldsen:ITQMonLC} 
the definition of an image transformation on LC spaces requires additivity on the collection of open and closed sets, and
hence, mimics the compact spaces case; the results in these papers are similar to ours. 
(Note that  \cite{AarnesTaraldsen},  \cite{Taraldsen:RegQHinLC}, and \cite{Taraldsen:ITQMonLC},
are revised but close versions of works that appeared as NTNU Dept. of Mathematics preprints in 1995-1997.)  
\end{remark}

\begin{example}  \label{invfunIT}
Let $u :Y  \longrightarrow X$ be a continuous proper function.  
If $Y$ is compact, then any continuous $u :Y  \longrightarrow X$ is proper.    
We may also take as proper functions, for example,  (a) $u(y) = \tan  y$, 
where $Y = (-\pi/2, \pi/2), X = \mathbb{R}$ or  (b) $u$ to be a polynomial on $\mathbb{R}$.
Let $q(A) = u^{-1}(A)$ for $A \in \mathscr{O}(X) \cup \mathscr{K}(X)$. 
Then $q$ is a (d-) image transformation.
Below in Theorem \ref{ITfromUf} we characterize (d-) image transformations that are inverses of continuous proper functions.  
\end{example}

\begin{definition}
A (p-conic) quasi-linear functional is simple if it is a quasi-integral with respect to a simple (deficient) topological measure.
Let $X^{\flat}$ and $X^\sharp$ be, respectively, the collections of simple deficient topological measures on $X$, and simple topological measures on $X$.  
For $ A \in \mathscr{O}(X) \cup \mathscr{C}(X)$ let $A^{\flat} = \{ \mu \in X^{\flat}: \mu(A) = 1 \}$, and $A^\sharp = \{ \mu \in X^\sharp: \mu(A) = 1 \}$.
\end{definition}

\begin{theorem}  \label{rhosimple}
Let $X$ be LC.  \\
(A)  For a quasi-linear functional $\rho$ on $ C_0(X)$ the following are equivalent:
\begin{enumerate}[label=(\roman*),ref=(\roman*)]
\item \label{sim} 
$\rho$ is simple. 
\item \label{ptm}
$m_f$ is a point mass at $y = \rho(f) \in f(X)$.
\item \label{phiout}
$ \rho(\phi \circ f) = \phi(\rho(f)) $ for any $\phi \in C([a,b])$ (with $\phi(0) = 0$ if $X$ is LC but not compact), where $f(X) \subseteq [a,b]$.
\item \label{mult}
$\rho$ is multiplicative on each singly generated subalgebra, 
i.e. for each  $f \in C_0(X)  $ we have:
$\rho(gh) =  \rho (g) \, \rho (h)$ for $g,h \in B(f)$. 
\item \label{fsquare}
$\rho(f^2) = (\rho(f))^2$  for each  $f \in C_0(X) $. 
\end{enumerate}
(B)  For a p-conic quasi-linear functional $\mathcal{R}$ on $ C_0(X)$ the following are equivalent:
\begin{enumerate}[label=(p\roman*),ref=(p\roman*)]
\item \label{sim1} 
$\mathcal{R}$ is simple. 
\item \label{ptm1}
$r_f$ is a point mass at $y = \mathcal{R}(f) \in f(X)$.
\item \label{phiout1}
$ \mathcal{R}(\phi \circ f) = \phi(\mathcal{R}(f)) $ for any nondecreasing $\phi \in C([a,b])$ 
(with $\phi(0) = 0$ if $X$ is noncompact), where $f(X) \subseteq [a,b]$.
\item \label{mult1}
$\mathcal{R}$ is multiplicative on each singly generated cone, 
i.e. for each  $f \in C_0(X)  $ we have:
$\mathcal{R}(gh) =  \mathcal{R} (g) \, \mathcal{R} (h)$ for $g,h \in A^+(f)$. 
\item \label{fsquare1}
$\mathcal{R}(f^2) = (\mathcal{R}(f))^2$  for each  $f \in C_0(X) $.
\end{enumerate}
\end{theorem}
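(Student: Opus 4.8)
\emph{Overall strategy.} For both (A) and (B) the plan is to establish a single cycle of implications --- $\text{(i)}\Rightarrow\text{(ii)}\Rightarrow\text{(iii)}\Rightarrow\text{(iv)}\Rightarrow\text{(v)}\Rightarrow\text{(i)}$ for (A), and the corresponding cycle (pi)$\Rightarrow\cdots\Rightarrow$(pi) for (B). The two arguments run in parallel: for (B) one replaces the subalgebra $B(f)$ by the cone $A^+(f)$, arbitrary $\phi\in C([a,b])$ by nondecreasing such $\phi$, and topological measures by deficient ones, and one reduces everywhere to $f\ge 0$, which is legitimate since a p-conic quasi-linear functional is determined by its values on $C_0^+(X)$ (Remark \ref{RemBRT}). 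So I would write out (A) in detail and then indicate the changes. Throughout I use the representation of $\rho$ by $\mu=\mu_\rho$ from Remark \ref{RemBRT}, in particular $\rho(g)=\int_{\mathbb{R}} id\,dr_{g,\mu}$ with $supp\,r_{g,\mu}\subseteq\overline{g(X)}$; the trivial case $\rho\equiv 0$ is set aside (then $m_f\equiv 0$ is not a point mass and (ii) is excluded).

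\emph{(i)$\Rightarrow$(ii) and (ii)$\Rightarrow$(iii).} If $\mu$ is simple and not identically $0$, then $\mu(X)=1$ (it is a value of $\mu$, and $0$ would force $\mu\equiv0$ by monotonicity), and $R_1(t)=\mu(f^{-1}((t,\infty)))$ is nonincreasing, takes only the values $0,1$, is right-continuous (since $f^{-1}((s,\infty))\nearrow f^{-1}((t,\infty))$ as $s\searrow t$ and $\mu$ is $\tau$-smooth on open sets, Remark \ref{tausm}), tends to $1$ as $t\to-\infty$, and tends to $0$ as $t\to+\infty$ (the last because $f^{-1}([s,\infty))\in\mathscr{K}(X)$ for $s>0$ decreases to $\emptyset$ and $\mu$ is $\tau$-smooth on compact sets). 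Hence $R_1=1_{(-\infty,y)}$ with $y=\inf\{t:R_1(t)=0\}$, so the Lebesgue--Stieltjes measure $r_f=m_f$ of $-R_1$ is $\delta_y$ and $\rho(f)=\int id\,dm_f=y$. Since $supp\,m_f\subseteq\overline{f(X)}\subseteq f(X)\cup\{0\}$ ($f$ extends to the one-point compactification with value $0$ at infinity), $y\in f(X)$ unless $y=0$; the remaining case $y=0$, $0\notin f(X)$ is ruled out by splitting $X=\{f>0\}\sqcup\{f<0\}$ and using superadditivity of $\mu$ on open sets to contradict $R_1(t)=1$ for small $t<0$. For (ii)$\Rightarrow$(iii): on $B(f)\cong C(\overline{f(X)})$ (via $\psi\circ f\leftrightarrow\psi$, \cite[L.~1.4]{Butler:QLFLC}) the functional $\rho$ is positive and linear by Definition \ref{cqlf}, hence equals integration against $m_f$; when $m_f=\delta_{\rho(f)}$ this says exactly $\rho(\phi\circ f)=\phi(\rho(f))$.

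\emph{(iii)$\Rightarrow$(iv)$\Rightarrow$(v), and the crux (v)$\Rightarrow$(i).} For $g=\phi\circ f,\ h=\psi\circ f\in B(f)$ we have $gh=(\phi\psi)\circ f$, so (iii) gives $\rho(gh)=(\phi\psi)(\rho(f))=\phi(\rho(f))\psi(\rho(f))=\rho(g)\rho(h)$, which is (iv); and (v) is the case $g=h=f$. The substance is (v)$\Rightarrow$(i). Iterating $\rho(h^2)=\rho(h)^2$ gives $\rho(h^{2^k})=\rho(h)^{2^k}$ for all $k$. If $0\le f\le1$ then $0\le f^{2^k}\le f$, so by monotonicity of $\rho$ (Remark \ref{RemBRT}) $\rho(f)^{2^k}=\rho(f^{2^k})\le\rho(f)$; since $\rho(f)\ge0$, this forces $\rho(f)\le1$. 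Choosing $g\in C_c(X)$ with $1_K\le g\le1$ (Lemma \ref{easyLeLC}) gives $\mu(K)=\inf\{\rho(h):h\in C_c(X),\,1_K\le h\le1\}\le\rho(g)\le1$, so $\mu(X)=\sup_K\mu(K)\le1$. If $\mu(X)=0$ then $\mu\equiv0$ is simple; if $\mu(X)=1$, then for any such $g$ (automatically $g\equiv1$ on $K$, so each $g^{2^k}$ is again admissible) $\mu(K)\le\rho(g^{2^k})=\rho(g)^{2^k}$ for all $k$, whence $\mu(K)>0$ forces $\rho(g)\ge1$, hence $\rho(g)=1$, hence $\mu(K)=\inf_g\rho(g)=1$. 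Thus $\mu(K)\in\{0,1\}$ for every compact $K$, and then $\mu(U)\in\{0,1\}$ for open $U$ by \ref{TM2} and $\mu(F)\in\{0,1\}$ for closed $F$ by \ref{TM3} (using $\mu(X)=1<\infty$ and monotonicity), so $\mu$ is simple.

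\emph{Part (B) and the main obstacle.} For (B) one argues identically: (pi)$\Rightarrow$(pii) is the same computation with the deficient topological measure and $r_f$; (pii)$\Rightarrow$(piii) uses that on the cone $A^+(f)$ the functional $\mathcal{R}$ is integration against $r_f$, equivalently $r_{\phi\circ f}=\phi_*r_f$ for nondecreasing $\phi$, so $r_f=\delta_{\mathcal{R}(f)}$ yields $\mathcal{R}(\phi\circ f)=\phi(\mathcal{R}(f))$; (piii)$\Rightarrow$(piv) is the product computation restricted to $f\ge0$, where a product of nonnegative nondecreasing functions is again nonnegative nondecreasing; (piv)$\Rightarrow$(pv) is $g=h=f$; and (pv)$\Rightarrow$(pi) is word-for-word the dyadic-power argument above, needing only that $\mathcal{R}$ is monotone together with the descriptions of $\mu_{\mathcal{R}}$ on compact, open and closed sets. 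The step I expect to be the main obstacle is (v)$\Rightarrow$(i): the hypothesis controls $\rho$ only on squares, and the device of passing to the dyadic powers $f^{2^k}$ and pairing it with monotonicity is exactly what converts this into the two-valuedness of $\mu_\rho$. A secondary nuisance is the locally compact bookkeeping --- the normalization $\phi(0)=0$, identifying $\|r_f\|$ with $\mu(X)$, and upgrading $\rho(f)\in\overline{f(X)}$ to $\rho(f)\in f(X)$.
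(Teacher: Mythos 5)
Your cycle (i)$\Rightarrow$(ii)$\Rightarrow$(iii)$\Rightarrow$(iv)$\Rightarrow$(v)$\Rightarrow$(i) is, in substance, exactly the route of the proof the paper invokes (\cite[Th.~3.1]{Butler:RepeatedQint}, adapted as indicated): the two–valued, right-continuous distribution function $R_1$ makes $m_f$ a point mass, the representation of $\rho$ on $B(f)$ (resp.\ $A^+(f)$) by integration against $m_f$ (resp.\ $r_f$) gives (iii), and your dyadic-power argument combined with the inf-description of $\mu_\rho(K)$ is a clean way to close (v)$\Rightarrow$(i). Two points need attention. First, the identification $\rho(\phi\circ f)=\int\phi\,dm_f$ does not follow ``from Definition~\ref{cqlf}'' plus positivity alone (that only yields \emph{some} representing measure on $\overline{f(X)}$); it is precisely the image-measure/change-of-variables input the paper points to (\cite[Th.~6.7, Th.~6.10, L.~7.8]{Butler:ReprDTM}), which you mention only parenthetically for (B) --- make it the stated justification in both parts.

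Second, and more seriously: in part (B) your reduction to $f\ge 0$ is not a convenience justified by ``a p-conic functional is determined by its values on $C_0^+(X)$''; it is forced, and your (piii)$\Rightarrow$(piv) genuinely covers only that case (you need $\phi,\psi\ge 0$ so that $\phi\psi$ is again nondecreasing). For sign-changing $f$ the implication (pi)$\Rightarrow$(pv) actually fails: for the simple deficient topological measure $\nu$ of Example~\ref{basicExDTM} one has $\int_X g\,d\nu=\min_D g$ for every $g\in C_0(X)$, so if $f$ changes sign on $D$ then $\int_X f^2\,d\nu=\min_D f^2=0\neq(\min_D f)^2=(\int_X f\,d\nu)^2$. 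Thus (pii)--(pv) in part (B) must be read with $f\in C_0^+(X)$ (consistent with the parenthetical in Remark~\ref{RemBRT} that p-conic functionals are considered on $C_0^+(X)$, and with how the theorem is applied, e.g.\ in Lemma~\ref{XdiezCom}, where one may test with nonnegative functions). Your determination-on-positives remark does not bridge this, since the conditions themselves quantify over possibly sign-changing elements of $A^+(f)$; state the restricted reading explicitly and your argument for (B) is then complete. With that caveat, the proposal is correct and matches the paper's approach.
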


Part (A) (replacing $C_c(X)$ with $C_0(X)$) is proved in \cite[Th. 3.1]{Butler:RepeatedQint}.
Moreover, replacing  quasi-linear functionals with p-conic quasi-linear functionals,
we see that the proof of part (B)  
repeats almost verbatim the proof of \cite[Th. 3.1]{Butler:RepeatedQint} (use \cite[Th. 6.7]{Butler:ReprDTM} instead of formula 
$m_f(W) = \mu(f^{-1}(W))$  
in the proof of (i) $ \Longrightarrow $ (ii), and \cite[L. 7.8]{Butler:ReprDTM} in 
(ii) $ \Longrightarrow $ (iii) in \cite[Th. 3.1]{Butler:RepeatedQint}). 

\begin{lemma} \label{XdiezCom}
Let $X$ be LC. The spaces  $X^{\flat}$ and $X^\sharp$ are compact Hausdorff, and so are $C^{\flat}$ and $C^\sharp$ for any compact $C$.
If $U$ is a nonempty open set in $X$, then  $U^{\flat}$ is a nonempty open set in $X^{\flat}$, and  $U^\sharp$ is a nonempty open  set in $X^\sharp$. 
\end{lemma}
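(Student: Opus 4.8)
The plan is to realize each of the four spaces as a closed subset of a known compact Hausdorff space and then verify the openness statements directly from the definitions. First I would show that $X^\flat$ is compact Hausdorff. By Remark \ref{McompT2}, the uniformly bounded in variation family $\{\mu \in \mathbf{DTM}(X): \mu(X) \le 1\}$ is compact Hausdorff in the weak topology; since every simple deficient topological measure $\mu$ has $\|\mu\| = \mu(X) \le 1$, we have $X^\flat \subseteq \{\mu: \mu(X) \le 1\}$. So it suffices to show $X^\flat$ is \emph{closed} in this compact space. For this I would use the characterization of weak convergence via open and compact sets from \eqref{WkNbd}: if $\mu_\alpha \Rightarrow \mu$ with each $\mu_\alpha$ simple, I want to conclude $\mu$ is simple, i.e. $\mu(A) \in \{0,1\}$ for every open (equivalently every compact) set. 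The subtlety is that $\mu(U) > \nu(U) - \epsilon$ and $\mu(C) < \nu(C)+\epsilon$ only control things from one side, so I would argue as follows: given compact $C$, pick by \ref{TM3}/Lemma \ref{easyLeLC} an open $U \supseteq C$ with $\overline U$ compact and $\mu(U) < \mu(C) + \epsilon$; then $\nu_\alpha(U)$ is eventually within $\epsilon$ of $\mu(U)$ and $\nu_\alpha(C)$ eventually within $\epsilon$ of $\mu(C)$, but $\nu_\alpha(U), \nu_\alpha(C) \in \{0,1\}$ with $\nu_\alpha(C) \le \nu_\alpha(U)$, and combining these forces $\mu(C)$ to lie within $O(\epsilon)$ of $\{0,1\}$; letting $\epsilon \to 0$ gives $\mu(C) \in \{0,1\}$. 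So $\mu \in X^\flat$, hence $X^\flat$ is closed, hence compact; it is Hausdorff as a subspace of a Hausdorff space (Remark \ref{McompT2}).

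For $X^\sharp$, the same argument applies verbatim, since $X^\sharp = X^\flat \cap \mathbf{TM}(X)$ and $\mathbf{TM}(X)$ is itself closed in $\mathbf{DTM}(X)$ (a weak limit of topological measures is a topological measure — this follows from the weak$^*$-closedness of quasi-linear functionals among p-conic quasi-linear functionals, or directly from Remark \ref{McompT2}); intersecting two closed sets inside a compact Hausdorff space again gives a compact Hausdorff space. For $C^\flat$ and $C^\sharp$ with $C$ compact, note $C^\flat = \{\mu \in X^\flat: \mu(C) = 1\}$; since $C$ is compact, the map $\mu \mapsto \mu(C)$ is upper semicontinuous in the weak topology (from \eqref{WkNbd}, $\{\mu: \mu(C) < t\}$ is open), and on the \emph{discrete}-valued family $X^\flat$ the condition $\mu(C) = 1$ is equivalent to $\mu(C) > 1/2$... no — rather, $\{\mu \in X^\flat : \mu(C)=1\} = \{\mu \in X^\flat: \mu(C) \ge 1\}$, and by superadditivity (Remark \ref{tausm}) together with $\mu(X) \le 1$ this equals $\{\mu: \mu(C) > 1 - \epsilon\}$ for small $\epsilon$ — hmm, that is open, not closed. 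Let me instead use: $\mu(C) = 1 \iff \mu(X \setminus C)$... $X \setminus C$ is open; by Remark \ref{dtmisptm}-type reasoning $\mu(C) = 1$ forces $\mu$ supported off the open complement only in the simple case. Cleanest: $C^\flat = X^\flat \setminus \{\mu : \mu(C) < 1\}$, and $\{\mu: \mu(C) < 1\}$ is open by \eqref{WkNbd}; since $X^\flat$ is compact, $C^\flat$ is a closed subset of a compact Hausdorff space, hence compact Hausdorff. Same for $C^\sharp$.

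Finally, for the openness assertions: let $U$ be a nonempty open set in $X$. By \ref{TM2} there is a compact $K \subseteq U$ with... actually I first need that $U^\flat \ne \emptyset$, i.e. there exists a simple deficient topological measure with $\mu(U) = 1$: pick any $x \in U$ and take the point mass $\delta_x$, which is simple and has $\delta_x(U) = 1$, and likewise $\delta_x$ is a topological measure so $U^\sharp \ne \emptyset$. For openness, $U^\flat = \{\mu \in X^\flat: \mu(U) = 1\} = \{\mu \in X^\flat: \mu(U) > 1/2\}$ (values are in $\{0,1\}$), and by \eqref{WkNbd} the set $\{\mu \in \mathbf{DTM}(X): \mu(U) > \nu(U) - \epsilon\}$ is a basic weak neighborhood; taking $\nu$ with $\nu(U)=1$ and $\epsilon = 1/2$ shows $\{\mu: \mu(U) > 1/2\}$ is open in $\mathbf{DTM}(X)$, so its trace $U^\flat$ is open in $X^\flat$. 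The identical argument gives $U^\sharp$ open in $X^\sharp$.

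The main obstacle I anticipate is the closedness of $X^\flat$: making the two-sided pinching argument rigorous requires carefully choosing the open set $U \supseteq C$ so that both the outer-regularity estimate $\mu(U) < \mu(C) + \epsilon$ and the weak-convergence estimates on $U$ and $C$ can be run simultaneously, and then exploiting that $\nu_\alpha$ takes only the values $0,1$ together with monotonicity $\nu_\alpha(C) \le \nu_\alpha(U)$ to trap $\mu(C)$. Everything else is bookkeeping with \eqref{WkNbd} and Remark \ref{McompT2}.
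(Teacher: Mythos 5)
The crux of the lemma is the closedness of $X^{\flat}$ (and $X^\sharp$) inside the compact set $\{\mu: \mu(X)\le 1\}$ of Remark \ref{McompT2}, and this is exactly where your argument has a genuine gap. You assert that if $\nu_\alpha \Longrightarrow \mu$ with $\nu_\alpha$ simple, then $\nu_\alpha(U)$ is eventually within $\epsilon$ of $\mu(U)$ and $\nu_\alpha(C)$ eventually within $\epsilon$ of $\mu(C)$. But the weak topology, via the neighborhoods (\ref{WkNbd}), yields only the one-sided estimates $\nu_\alpha(U) > \mu(U)-\epsilon$ for open $U$ and $\nu_\alpha(C) < \mu(C)+\epsilon$ for compact $C$; two-sided set convergence is strictly stronger and holds only for ``continuity sets'' with $\mu(V)=\mu(\overline V)$ (\cite[Th. 2.1]{Butler:WkConv}). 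With only the one-sided bounds your pinching cannot trap $\mu(C)$: if $\mu(C)=c\in(0,1)$ and $U\supseteq C$ is open with $\mu(U)<c+\epsilon$, the available constraints force eventually $\nu_\alpha(C)=0$ and $\nu_\alpha(U)=1$, which is perfectly consistent (think of $\mu=\tfrac12(\delta_x+\delta_y)$, $C=\{x\}$, and $\nu_\alpha$ a point mass at a point of $U\setminus\{x\}$), so no contradiction appears and the step ``letting $\epsilon\to 0$ gives $\mu(C)\in\{0,1\}$'' does not follow; detecting non-simplicity of the limit requires global information, not one compact set and one neighborhood. The paper closes this step with a functional rather than set-theoretic argument: by Theorem \ref{rhosimple}, simplicity is equivalent to $\int_X f^2\,d\mu=\bigl(\int_X f\,d\mu\bigr)^2$ for all $f\in C_0(X)$, and this identity passes to weak limits at once because $\mu\mapsto\int f\,d\mu$ and $\mu\mapsto\int f^2\,d\mu$ are precisely the maps the weak topology makes continuous. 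Replacing your pinching by this observation repairs the proof, and it applies verbatim to $X^\sharp$, so you also do not need the parenthetical (and unproved) claim that $\mathbf{TM}(X)$ is weakly closed in $\mathbf{DTM}(X)$ -- it suffices to work inside $\{\mu\in\mathbf{TM}(X):\mu(X)\le 1\}$, compact by Remark \ref{McompT2}.

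The remaining parts of your proposal are correct. Your ``cleanest'' argument that $C^{\flat}=X^{\flat}\setminus\{\mu:\mu(C)<1\}$ is closed, because $\{\mu:\mu(C)<1\}$ contains the basic neighborhood $W(\nu, C, 1-\nu(C))$ of each of its points $\nu$, is sound and in fact somewhat more direct than the paper's proof, which argues by contradiction using Lemma \ref{easyLeLC} and the continuity-set convergence result of \cite{Butler:WkConv}. The nonemptiness and openness of $U^{\flat}$ and $U^\sharp$ via point masses and the neighborhoods of Remark \ref{rmwkbase} coincide with the paper's argument.
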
 

\begin{proof}
Let $ \mu_t \Longrightarrow \mu$, where $ \mu_t \in  X^{\flat}$. For any $f \in C_0(X)$ by Theorem \ref{rhosimple} we have:
$ \int_X f^2 \, d\mu = \lim  \int_X f^2 \, d\mu_t =   \lim ( \int_X f \, d\mu_t )^2 = (\int_X f \, d\mu)^2$, which shows that $ \mu$ is simple. Thus, 
$(X^{\flat})$ is closed. 
By Remark \ref{McompT2},  $X^{\flat}$ and $X^\sharp$  are compact Hausdorff. 

Let $C$ be compact.  Suppose $ \mu_t \in C^\flat,  \mu_t \Longrightarrow \mu$. We claim that $ \mu \in C^\flat$. 
Suppose to the contrary that $\mu(C) = 0$.  Then  $ \mu(U) = 0$ for some open $U \supset C$. 
By Lemma \ref{easyLeLC} there is an open set $V  \supset C$ with $ \mu(V) = \mu(\overline V) = 0$. 
But $\mu_t(V) = 1$ for all $t$, and by \cite[Th. 2.1(3)]{Butler:WkConv}  $\mu(V) =1$. The contradiction shows that  $ \mu \in C^\flat$. 
So $C^{\flat}$ (and similarly $C^\sharp$) is closed, hence, compact. 

If $U$ is a nonempty open set in $X$, then $U^{\flat} = \{ \mu \in X^{\flat}: \mu(U) = 1 \} = \{ \mu \in X^{\flat}: \mu(U) > \delta_p (U) - \epsilon\} $, 
where $p \in U, 0 < \epsilon< 1$.  By Remark \ref{rmwkbase} $U^{\flat}$ is a nonempty open set in $X^{\flat}$. 
Similarly, $U^\sharp$ is a nonempty open  set in $X^\sharp$.
\end{proof}

\begin{remark} \label{basicNbd*}
If $\nu$ is a simple deficient topological measure, and $ \nu (U_i) = 1, \nu (C_j) = 0$, where $U_1, \ldots, U_n$ are (bounded) open and 
$C_1, \ldots, C_m$ are compact, then by Remark \ref{rmwkbase} 
the basic neighborhood $W$ of $\nu$ in $X^{\flat}$
for any $\epsilon < 1$ has the form:
 $$W =W(U_1, \ldots, U_n, C_1, \ldots, C_m) = \{ \mu \in X^{\flat}: \mu(U_i)= 1, \, \mu(C_j) = 0 \},$$  $i=1, \ldots, n,  j =1, \ldots, m$.
If $\nu$ is a simple topological measure,  then $\mu(C) = 0$ (where $C$ is compact) iff $\mu(X \setminus C) = 1$, 
so a basic neighborhood in $X^\sharp$ has the form:
\begin{align} \label{XstTopo}
  W = W\{ U_1, \ldots, U_n \} = \{ \mu \in X^\sharp: \mu(U_i)= 1, \,  i=1, \ldots, n \} = U_1^\sharp \cap \ldots \cap U_n^\sharp.
\end{align} 
\end{remark}

Let $\tau_s$ be the  topology on $\mathbf{DTM}(X)$ with basic open sets of the form (\ref{WkNbd}), where $U_i$ are bounded open solid sets, and 
$C_j$ are compact solid sets.  

\begin{lemma} \label{homeom} 
Suppose $X$ is a LC connected, locally connected space, and family  $\mathcal{M}  \subseteq \mathbf{TM}(X)$ is 
closed and uniformly bounded in variation. 
Then the identity map $i: \mathcal{M} \longrightarrow (\mathcal{M}, \tau_s)$ is a homeomorphism.
In particular, basic neighborhoods of $X^\sharp$  can be given by (\ref{XstTopo}), where sets $U_i$  are bounded open solid.
\end{lemma}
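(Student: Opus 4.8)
The plan is to reduce everything to a compactness argument and then to the structure theory of solid-set functions. First observe that, by Remark~\ref{rmwkbase}, the sets~(\ref{WkNbd}) with $U_i$ merely bounded open and $C_j$ compact already form a base for the weak topology; since the defining base of $\tau_s$ consists of such sets with the extra requirement that $U_i$ be bounded open solid and $C_j$ compact solid, the topology $\tau_s|_{\mathcal M}$ is coarser than the weak topology on $\mathcal M$, so $i$ is continuous. By Remark~\ref{McompT2} every family $\{\mu:\|\mu\|\le M\}$ is compact Hausdorff in the weak topology, and $\mathcal M$, being weakly closed and uniformly bounded in variation, is a weakly compact Hausdorff space. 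A continuous bijection from a compact space onto a Hausdorff space is a homeomorphism, so it suffices to prove that $(\mathcal M,\tau_s)$ is Hausdorff; since this need only be checked inside each $\{\mu\in\mathbf{TM}(X):\|\mu\|\le M\}$, we may assume the masses are uniformly bounded by a fixed $M$.

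For the Hausdorff property I would argue as follows. Because $X$ is LC, connected and locally connected, the restriction of a topological measure to $\mathscr{A}_s^*(X)$ is a solid-set function, and a solid-set function extends to a \emph{unique} topological measure (\cite[Def.~6.1, Th.~10.7]{Butler:TMLCconstr}); hence the restriction map is injective on $\mathbf{TM}(X)$, so distinct $\mu,\nu\in\mathcal M$ differ on some $A\in\mathscr{A}_s^*(X)$. Moreover, by the outer regularity \ref{regulo} of solid-set functions, if $\mu$ and $\nu$ agreed on all bounded open solid sets they would agree on all compact solid sets, hence on $\mathscr{A}_s^*(X)$, hence everywhere; so we may assume there is a bounded open solid $U$ with, say, $\mu(U)<c<\nu(U)$. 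Then $\{\rho\in\mathcal M:\rho(U)>c\}$ is a basic $\tau_s$-neighbourhood of $\nu$ (lower bound on the value of an open solid set), and it is disjoint from $\{\rho\in\mathcal M:\rho(U)<c\}$, which contains $\mu$. Thus everything comes down to showing that $\{\rho\in\mathcal M:\rho(U)<c\}$ is $\tau_s$-open, i.e.\ that $\rho\mapsto\rho(U)$ is upper semicontinuous on $\mathcal M$ with respect to $\tau_s$ for every bounded open solid $U$. To obtain this one uses the explicit reconstruction of a topological measure from its solid-set function on a connected, locally connected LC space (\cite[Sect.~11, Sect.~12, L.~15.2]{Butler:TMLCconstr}): $\rho(U)$ is computed from the values of $\rho$ on bounded open solid supersets of the relevant pieces and on the compact solid "holes", using finite additivity~\ref{TM1}, additivity on disjoint sets and $\tau$-smoothness on open sets (Remark~\ref{tausm}), and \ref{regul}, \ref{regulo}; the uniform bound $\|\rho\|\le M$ on $\mathcal M$ controls the tails of the (countable) sums of hole values, and the outcome exhibits $\{\rho\in\mathcal M:\rho(U)<c\}$ as a union of finite intersections of basic $\tau_s$-sets.

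The main obstacle is precisely this upper-semicontinuity step. The basic $\tau_s$-sets only furnish \emph{lower} bounds on the values on bounded open solid sets and \emph{upper} bounds on the values on compact solid sets, so bounding $\rho(U)$ from above for an open set $U$ is not a formal manipulation: it has to be extracted from the way a topological measure is rebuilt from its solid-set function, and this is exactly where connectedness and local connectedness of $X$, and the machinery of \cite{Butler:TMLCconstr}, are genuinely used. Once this is established, $(\mathcal M,\tau_s)$ is Hausdorff and $i$ is a homeomorphism. The final assertion then follows by taking $\mathcal M=X^\sharp$, which by Lemma~\ref{XdiezCom} and Remark~\ref{McompT2} is weakly closed and uniformly bounded in variation with constant $1$, and recalling from Remark~\ref{basicNbd*} that the basic weak neighbourhoods of $X^\sharp$ have the form~(\ref{XstTopo}); the homeomorphism allows the sets $U_i$ there to be taken bounded open solid.
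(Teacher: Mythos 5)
Your skeleton is the same as the paper's: $i$ is continuous since every basic $\tau_s$-set is among the weak basic sets of Remark \ref{rmwkbase}, $\mathcal M$ is weakly compact by Remark \ref{McompT2}, and a continuous bijection from a compact space onto a Hausdorff space is a homeomorphism, so everything hinges on $(\mathcal M,\tau_s)$ being Hausdorff, which the paper gets from the uniqueness of the solid-set-function extension \cite[Th. 10.7]{Butler:TMLCconstr}. The problem is your execution of the separation step: you reduce Hausdorffness to the claim that $\{\rho\in\mathcal M:\rho(U)<c\}$ is $\tau_s$-open for a bounded open solid $U$, i.e.\ that $\rho\mapsto\rho(U)$ is $\tau_s$-upper semicontinuous, and you leave this ``main obstacle'' as a sketch. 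In fact that claim is false, so no amount of machinery from \cite{Butler:TMLCconstr} can deliver it. Since $\tau_s$ is coarser than the weak topology (your own first observation), such a set would have to be weakly open; but on open sets topological measures are only lower semicontinuous under weak convergence. Concretely, let $X=\mathbb{R}^2$, let $U$ be the open unit disk, $x_n\in U$ with $x_n\to x\in\partial U$, and $\mathcal M=\{\delta_y: y\in\overline U\}$, a closed, uniformly bounded in variation family; then $\delta_{x_n}\Longrightarrow\delta_x$ while $\delta_{x_n}(U)=1$ and $\delta_x(U)=0$, so $\{\rho\in\mathcal M:\rho(U)<c\}$ is not weakly open, hence not $\tau_s$-open. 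So your proposed route cannot close the gap.

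The correct separation never tries to bound $\rho(U)$ from above on the open set itself; it uses a nested pair consisting of a bounded open solid set inside a compact solid set, so that monotonicity makes the two one-sided conditions incompatible. Given $\mu(U)<c<\nu(U)$ with $U$ bounded open solid, inner regularity \ref{TM2} gives a compact $K\subseteq U$ with $\nu(K)>c$, and Lemma \ref{easyLeLC} together with the solid-hull lemmas of \cite[L. 2.5, L. 3.7]{Butler:TMLCconstr} (cf.\ Remark \ref{easyKprpr}) produce a bounded open solid $U'$ and a compact solid $C$ with $K\subseteq U'\subseteq C\subseteq U$. Then $\nu(U')\ge\nu(K)>c$ and $\mu(C)\le\mu(U)<c$, so the basic $\tau_s$-sets $\{\rho:\rho(U')>c\}$ and $\{\rho:\rho(C)<c\}$ contain $\nu$ and $\mu$ respectively and are disjoint because $U'\subseteq C$ forces $\rho(U')\le\rho(C)$; a symmetric argument via \ref{regulo} handles the case where $\mu$ and $\nu$ differ only on a compact solid set. (The paper's proof is terse on exactly this point, but it does not rest on your upper-semicontinuity claim.) With this replacement the rest of your argument, including the passage to $X^\sharp$ and the neighborhoods (\ref{XstTopo}), is correct and agrees with the paper.
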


\begin{proof}
Clearly, 
$i: \mathcal{M} \longrightarrow (\mathcal{M}, \tau_s)$ is continuous. By Remark \ref{McompT2} $  \mathcal{M}$ is compact.  
By \cite[Th. 10.7]{Butler:TMLCconstr} a topological measure is uniquely determined by its values on 
compact solid sets and open bounded solid sets. Thus, if $\mu \neq \nu, \, \mu, \nu \in \mathcal{M}$ 
then there is a compact solid or a bounded open solid  set $A$ 
such that $ \nu(A)  \neq \mu(A)$, and so $(\mathcal{M}, \tau_s)$ is Hausdorff. The statement now follows.
\end{proof}

\begin{example} \label{staIT}
Let $X$ be LC. For $A \in \mathscr{O}(X) \cup \mathscr{K}(X)$ let $\Lambda(A) = A^{\flat}$ (respectively,  $\Lambda(A) = A^\sharp$).
From Lemma \ref{XdiezCom} and elementary properties of simple (deficient) topological measures it is easy to see that
 $\Lambda$ is a d-image transformation from $X$ to $X^{\flat}$ (resp., an image transformation from $X$ to $X^\sharp$).
For instance, if $A, B, A \sqcup B \in \mathscr{O}(X) \cup \mathscr{K}(X)$ and $ \mu$ is simple,
$\mu(A \sqcup B) = 1$  implies that $ \mu(A) =1, \mu(B) =0$ or  $ \mu(A) =0, \mu(B) =1$, so $(A \sqcup B)^\sharp = A^\sharp \sqcup B^\sharp$.
\end{example}

\noindent
We call $\Lambda $ the basic d-image transformation (respectively, the basic image transformation).

\begin{theorem} 
Let $q$ be a (d-) image transformation  from $X$ to $Y$;  $X$, $Y$ are LC. 
For  a (deficient) topological measure $ \nu$ on $Y$ define the set function on $X$ by $(q^*\nu)(A) = \nu(q(A))$ 
for $ A \in \mathscr{O}(X) \cup \mathscr{K}(X)$
and $(q^*\nu)(F) = \inf\{ (q^*\nu) (U) : F \subseteq U, U \in \mathscr{O}(X)\}  $ for $ F \in \mathscr{C}(X)$. 
Then $q^*\nu$ is  a (deficient) topological measure on $X$.
\end{theorem}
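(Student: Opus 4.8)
The plan is to verify the three defining conditions \ref{TM1}, \ref{TM2}, \ref{TM3} for $q^*\nu$ one at a time, each time pulling the corresponding property of $\nu$ back along $q$ using Definition \ref{IT} and Lemma \ref{ITsvva}. Before anything else I would settle a bookkeeping point: a compact $K$ belongs to both $\mathscr{K}(X)$ and $\mathscr{C}(X)$, so the two clauses in the definition of $q^*\nu$ must agree, i.e.\ one needs $\nu(q(K)) = \inf\{\nu(q(U)) : K \subseteq U \in \mathscr{O}(X)\}$. The inequality ``$\le$'' is just monotonicity of $\nu$ together with Lemma \ref{ITsvva}\ref{ITsv1}. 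For ``$\ge$'' I would apply \ref{TM3} for $\nu$ to the compact (hence closed) set $q(K) \subseteq Y$: given $\epsilon > 0$ pick an open $W \supseteq q(K)$ in $Y$ with $\nu(W) < \nu(q(K)) + \epsilon$ (the case $\nu(q(K)) = \infty$ being immediate from monotonicity), and then invoke Lemma \ref{ITsvva}\ref{ITsv5} to get an open $U \supseteq K$ in $X$ with $q(U) \subseteq W$, so $\nu(q(U)) \le \nu(W) < \nu(q(K)) + \epsilon$. After this, $q^*\nu$ is an unambiguous $[0,\infty]$-valued set function on $\mathscr{O}(X) \cup \mathscr{C}(X)$.

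Property \ref{TM3} for $q^*\nu$ is then nothing but its definition on closed sets. For \ref{TM2}, fix an open $U \subseteq X$; as $C$ runs over the compact subsets of $U$, the sets $q(C)$ form an increasing net of compact subsets of $q(U)$, and by Lemma \ref{ITsvva}\ref{ITsv2} this net is cofinal among \emph{all} compact subsets of $q(U)$. Hence, using \ref{TM2} for $\nu$,
\[
(q^*\nu)(U) = \nu(q(U)) = \sup\{\nu(L): L \in \mathscr{K}(Y),\ L \subseteq q(U)\} = \sup\{(q^*\nu)(C): C \in \mathscr{K}(X),\ C \subseteq U\},
\]
which is \ref{TM2} for $q^*\nu$. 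For \ref{TM1}, take disjoint $A, B$ with $A, B, A \sqcup B \in \mathscr{O}(X) \cup \mathscr{K}(X)$ (in the d-image-transformation case only $A, B \in \mathscr{K}(X)$ is assumed, and then $A \sqcup B \in \mathscr{K}(X)$ automatically). By \ref{IT2}, $q(A \sqcup B) = q(A) \sqcup q(B)$, and by \ref{IT1} each of $q(A), q(B), q(A \sqcup B)$ is again open or compact according to the type of $A, B, A \sqcup B$; in particular $q(A), q(B), q(A) \sqcup q(B)$ satisfy the hypotheses of \ref{TM1} for $\nu$ (all three compact in the deficient case). Therefore $(q^*\nu)(A \sqcup B) = \nu(q(A)) + \nu(q(B)) = (q^*\nu)(A) + (q^*\nu)(B)$. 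This yields a topological measure when $q$ is an image transformation and $\nu \in TM(Y)$, and a deficient topological measure when $q$ is a d-image transformation and $\nu \in DTM(Y)$.

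The only step with genuine content is the well-definedness reduction, where one must convert an open neighbourhood of $q(K)$ in $Y$ into an open neighbourhood of $K$ in $X$ whose $q$-image is still small; this is precisely what Lemma \ref{ITsvva}\ref{ITsv5} supplies (and it in turn rests on \ref{IT4} and local compactness via Lemma \ref{easyLeLC}). The verifications of \ref{TM1} and \ref{TM2} are routine once \ref{IT1}, \ref{IT2}, and the cofinality observation coming from Lemma \ref{ITsvva}\ref{ITsv2} are in hand, so I would keep those brief.
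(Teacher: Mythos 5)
Your proposal is correct and follows essentially the same route as the paper's proof: \ref{TM1} via \ref{IT2}, inner regularity on open sets via Lemma \ref{ITsvva}\ref{ITsv2}, and the consistency of the two definitions on compact sets via outer regularity of $\nu$ together with Lemma \ref{ITsvva}\ref{ITsv5}. Your cofinality phrasing of the \ref{TM2} step is just a uniform repackaging of the paper's $\epsilon$/unbounded-case argument, so there is no substantive difference.
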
 

\noindent
We call $q^* : TM(Y) \rightarrow TM(X)$ (resp.,   $q^* : DTM(Y) \rightarrow DTM(X)$) the adjoint map for $q$.
 
\begin{proof}
The first property of Definition \ref{TMLC} follows from Definition \ref{IT}\ref{IT2}. For $U  \in \mathscr{O}(X)$, $ q(U)$ is open in $Y$, and we pick a compact 
$ K \subseteq q(U)$ such that $ \nu(q(U)) - \nu(K) < \epsilon$, if (a) $ \nu(q(U)) < \infty$ and $ \epsilon >0$; or such that $\nu(K) > c$, if  (b) $ \nu(q(U))  = \infty, c>0$.
By Lemma \ref{ITsvva}\ref{ITsv2} find $C \in \mathscr{K}(X)$ with $K \subseteq q(C) \subseteq q(U)$. 
In case (a) $ \nu(q(U)) - \nu(q(C)) \le \nu(q(U)) - \nu(K) < \epsilon$, and in case (b) $ \nu(q(C)) \ge \nu(K) >c$.  The second property 
of  Definition \ref{TMLC} follows. 

We shall show that  $q^*\nu $ is consistently defined on $ \mathscr{K}(X)$. Let $K \in \mathscr{K}(X)$.
If  $ \nu(q(K)) < \infty$,  pick an open set $ W \supseteq q(K)$ such that $\nu(W) - \nu(q(K)) < \epsilon$ for $ \epsilon >0$.  
By Lemma \ref{ITsvva}\ref{ITsv5} pick an open set $U \supseteq K$ 
such that $q(K) \subseteq q(U) \subseteq W$. Then $ \nu(q(U)) - \nu(q(K)) \le \nu(W) - \nu(q(K)) < \epsilon$, so  
$(q^*\nu) (K) = \inf \{(q^*\nu)(U): K \subseteq U, U \in \mathscr{O}(X)\}  $. The last equality is also easily seen if $ \nu(q(K)) = \infty$.
\end{proof}

\begin{remark}
It is easy to verify that $(p \circ q)^* = q^* \circ p^*$ for 
(d-) image transformations $p$ and $q$.
 We see that (d-) image transformations on $X$ are morphisms in a category with objects $\mathscr{O}(X) \cup \mathscr{K}(X)$, and adjoints of  
(d-) image transformations on $X$ are morphisms in a category with objects (deficient) topological measures on $X$.
\end{remark}

\begin{remark} \label{invfunDTM}
If $X,Y$ are LC, $u :Y \rightarrow X$ is a continuous proper map, and $ q = u^{-1}$ as in Example \ref{invfunIT}, 
we see that a (deficient) topological measure $ \nu$ on $Y$ 
induces a (deficient) topological measure $\mu =q^*\nu =  \nu \circ u^{-1}: A \mapsto \nu(u^{-1}(A))$ on $X$.
\end{remark}

\begin{theorem} \label{adjCont}
Suppose $X,Y$ are LC and $q$ is a d-image transformation (respectively, an image transformation) from $X$ to $Y$.
Then the adjoint map $q^* :DTM(Y) \rightarrow DTM(X)$ (resp.,  $q^* : TM(Y) \rightarrow TM(X)$) preserves finite linear combinations and  
is continuous.
In particular,  $q^* :Y^{\flat} \rightarrow X^{\flat}$ (resp.,  $q^* :Y^\sharp \rightarrow X^\sharp$)  is continuous.
If $y \in q(\{ x\})$ then $q^*(\delta_y) = \delta_x$.
\end{theorem}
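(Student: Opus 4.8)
The plan is to verify the three assertions in order: first that $q^*$ preserves finite linear combinations, then that $q^*$ is continuous, and finally the point-mass formula. The key observation for the first assertion is that, by definition, $(q^*\nu)(A) = \nu(q(A))$ for every $A \in \mathscr{O}(X) \cup \mathscr{K}(X)$, and that $\nu \mapsto \nu(B)$ is itself linear in $\nu$ for each fixed set $B$. So if $\nu = \sum_{i=1}^n c_i \nu_i$ (a finite nonnegative combination staying inside $DTM(Y)$ or $TM(Y)$), then for every open $U$ we get $(q^*\nu)(U) = \nu(q(U)) = \sum_i c_i \nu_i(q(U)) = \sum_i c_i (q^*\nu_i)(U)$, and since a (deficient) topological measure is determined by its values on open sets (Remark following Definition \ref{TMLC}), this forces $q^*\nu = \sum_i c_i\, q^*\nu_i$. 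One should note here the domain issue: $q^*$ is defined on $DTM(Y)$ resp. $TM(Y)$ rather than the finite-variation subspaces, so "preserves finite linear combinations" means whenever the combination lands back in the appropriate class; this is automatic since the class is closed under nonnegative combinations and the identity is checked set-by-set.

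For continuity, I would argue directly with the weak-topology neighborhood basis of the form $W(\nu, U_1,\dots,U_n, C_1,\dots,C_m,\epsilon)$ from (\ref{WkNbd}) in Remark \ref{rmwkbase}, since that description (in terms of open and compact sets) meshes cleanly with the definition of $q^*$. Fix $\mu_0 = q^*\nu_0 \in DTM(X)$ and a basic neighborhood $W(\mu_0, U_1,\dots,U_n, C_1,\dots,C_m,\epsilon)$ of it in $DTM(X)$. The difficulty is that $q^*\nu$ is only described directly by $\nu\circ q$ on open and compact sets, so I want to pull the test sets $U_i, C_j$ in $X$ back to test sets in $Y$. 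For the open sets this is immediate: $q(U_i)$ is open in $Y$ and $(q^*\nu)(U_i) = \nu(q(U_i))$. For the compact sets $C_j$ one cannot simply use $q(C_j)$ as a compact test set in $Y$, because we need strict control of $(q^*\nu)(C_j) = \inf\{(q^*\nu)(U): C_j \subseteq U\}$ from above; so first pick, using \ref{TM3} and Lemma \ref{ITsvva}, an open $U_j' \supseteq C_j$ in $X$ with $(q^*\nu_0)(U_j') < (q^*\nu_0)(C_j) + \epsilon/2$, then replace the compact constraint "$\mu(C_j) < \mu_0(C_j)+\epsilon$" by the open constraint "$\nu(q(U_j')) < \nu_0(q(U_j')) + \epsilon/2$" together with the monotonicity $(q^*\nu)(C_j) \le (q^*\nu)(U_j')$. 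Then the neighborhood $W(\nu_0, q(U_1),\dots,q(U_n), q(U_1'),\dots, q(U_m'), \epsilon/2)$ — actually expressed with all-open constraints via Remark \ref{rmwkbase}, which permits open sets on both sides since $(q^*\nu)(U_i) > (q^*\nu_0)(U_i) - \epsilon \iff \nu(q(U_i)) > \nu_0(q(U_i)) - \epsilon$ — maps into the prescribed neighborhood of $\mu_0$. The restriction to $Y^\flat \to X^\flat$ and $Y^\sharp \to X^\sharp$ then follows because $q^*$ sends simple (deficient) topological measures to simple ones (each $q(A)$ has $\nu$-value $0$ or $1$ when $\nu$ is simple, and simplicity is preserved on closed sets by the infimum formula), and these are subspaces with the relative weak topology by Lemma \ref{XdiezCom}.

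For the last claim, suppose $y \in q(\{x\})$. I want to show $(q^*\delta_y)(A) = \delta_x(A)$ for all $A \in \mathscr{O}(X)\cup\mathscr{K}(X)$, which by the determination-on-open-and-compact-sets principle suffices. By definition $(q^*\delta_y)(A) = \delta_y(q(A))$, which is $1$ if $y \in q(A)$ and $0$ otherwise. If $x \in A$ then $\{x\} \subseteq A$, so by monotonicity (Lemma \ref{ITsvva}\ref{ITsv1}) $q(\{x\}) \subseteq q(A)$, hence $y \in q(A)$ and $(q^*\delta_y)(A) = 1 = \delta_x(A)$. If $x \notin A$ then: when $A$ is compact, $\{x\} \cap A = \emptyset$ gives $q(\{x\}) \cap q(A) = \emptyset$ by \ref{IT2} (the argument in the proof of Lemma \ref{ITsvva}\ref{ITsv1dop}), so $y \notin q(A)$; when $A$ is open, $q(\{x\})$ is disjoint from $q(K)$ for every compact $K \subseteq A$ (again by disjointness of $\{x\}$ from $K$ and \ref{IT2}), and since $q(A) = \bigcup\{q(K): K \subseteq A, K \in \mathscr{K}(X)\}$ by \ref{IT3}, we get $y \notin q(A)$. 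Either way $(q^*\delta_y)(A) = 0 = \delta_x(A)$, and for $F$ closed the infimum formula gives the value $\delta_x(F)$ as well. Hence $q^*(\delta_y) = \delta_x$. I expect the main obstacle to be the bookkeeping in the continuity proof — specifically making sure the compact test sets in $X$ are correctly converted to open test sets in $Y$ without losing the strict inequalities — but this is routine once one commits to working with the $(\ref{WkNbd})$-style basis on both ends.
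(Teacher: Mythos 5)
Your treatment of the point-mass claim and of preservation of (nonnegative) finite linear combinations is correct, and your overall plan for continuity --- transferring a (\ref{WkNbd})-type neighborhood of $\mu_0=q^*\nu_0$ in $\mathbf{DTM}(X)$ to a neighborhood of $\nu_0$ in $\mathbf{DTM}(Y)$ by pushing the test sets forward under $q$ --- is exactly the paper's approach. However, your handling of the compact test sets contains a genuine error. The premise behind your detour is a misreading of the definition of $q^*$: for $C_j\in\mathscr{K}(X)$ the adjoint is defined directly by $(q^*\nu)(C_j)=\nu(q(C_j))$; the infimum formula is the definition only on closed sets, and the theorem introducing $q^*$ proves that the two expressions agree on compacts. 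So $q(C_j)$ is a perfectly legitimate compact test set in $Y$, and the constraint $\nu(q(C_j))<\nu_0(q(C_j))+\epsilon$ is all you need --- which is precisely what the paper uses.

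More seriously, your replacement step fails as written: you impose the upper bound $\nu(q(U_j'))<\nu_0(q(U_j'))+\epsilon/2$ on the \emph{open} set $q(U_j')$ and claim that Remark \ref{rmwkbase} ``permits open sets on both sides.'' It does not: in (\ref{WkNbd}) open sets carry only lower constraints and compact sets only upper constraints, and for good reason --- the map $\nu\mapsto\nu(W)$ for open $W$ is only lower semicontinuous in the weak topology (by (\ref{muUprosche}) it is a supremum of continuous maps), not upper semicontinuous. For instance, $\delta_{1/n}\Longrightarrow\delta_0$ on $\mathbb{R}$ with $W=(0,1)$ shows that $\{\nu:\nu(W)<1/2\}$ is not a weak neighborhood of $\delta_0$. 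Hence the set of $\nu$ you produce, while it is mapped by $q^*$ into the prescribed neighborhood of $\mu_0$, need not be a neighborhood of $\nu_0$, and the continuity argument collapses at that point. The fix is small: either use the compact sets $q(C_j)$ directly (the paper's route), or, if you prefer to enlarge, choose $U_j'\supseteq C_j$ with compact closure via Lemma \ref{easyLeLC} and outer regularity, and then use the \emph{compact} test sets $q(\overline{U_j'})$ in $Y$, since $\mu(C_j)\le\mu(\overline{U_j'})=\nu(q(\overline{U_j'}))$ for $\mu=q^*\nu$.
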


\begin{proof}
Suppose  $\nu_{\alpha} \Longrightarrow \nu_0$ in  $ DTM(Y)$. We shall show that  $\mu_{\alpha} \Longrightarrow \mu_0$ in  $ DTM(X)$, 
where $ \mu_{\alpha} = q^*(\nu_{\alpha}), \mu_0 = q^*(\nu_0)$.
Let $W =W(\mu_0, U_i, C_j, \epsilon)$, $i=1, \ldots n, j= 1, \ldots m$ be a basic neighborhood of $ \mu_0$ of the form (\ref{WkNbd}). 
For a  basic neighborhood $ N= N(\nu_0, q(U_i), q(C_j), \epsilon), i=1, \ldots n, j= 1, \ldots m$ there is $\alpha'$ such that 
$\nu_{\alpha} \in N$ for any $ \alpha >\alpha'$. Then $\mu_{\alpha} = q^*(\nu_{\alpha}) \in W$ for such $\alpha$, 
so  $\mu_{\alpha} \Longrightarrow \mu_0$, and $q^*$ is continuous.   
Clearly, the adjoint map $q^*$ maps $Y^{\flat}$ into $X^{\flat}$, so 
 $q^* :Y^{\flat} \rightarrow X^{\flat}$ (respectively,  $q^* :Y^\sharp \rightarrow X^\sharp$)  is continuous.
 
Suppose  $y \in q(\{ x\})$. So $\mu =q^*(\delta_y)$ is a simple (deficient) topological measure. Since
$\mu(\{x\}) = q^*(\delta_y) (\{x\}) = \delta_y(q(\{x\})) = 1$, by Remark \ref{dtmisptm} $\mu = \delta_x$.   
\end{proof}

\noindent
We denote by $i_Y: Y \rightarrow  \mathbf{DTM}(Y) $ a continuous map $y \longmapsto \delta_y$.

\begin{definition}
Let $X, Y$ be LC spaces. 
We say that $w: Y \rightarrow X^{\flat}$ (respectively, $w: Y \rightarrow X^\sharp$) is k-proper if $w^{-1}( K^\flat)$ (resp.,  $w^{-1}( K^\sharp)$) 
is compact for any $K \in \mathscr{K}(X)$.
\end{definition}

\begin{remark} \label{easyKprpr}
Suppose $Y$ is compact.  Since $X^{\flat}$ is Hausdorff, any continuous $w: Y \rightarrow X^{\flat}$  (or  $w: Y \rightarrow X^\sharp$) is proper, 
hence, k-proper. Suppose $Y$ is LC, connected, locally connected. Then for a compact $K$ there is a compact solid set $C$ containing $K$ 
(see \cite[L. 2.5, L. 3.7]{Butler:TMLCconstr}), so $w^{-1}(K) \subseteq w^{-1}(C)$. Thus, to show that $w$ is k-proper, it is enough to check that 
$w^{-1}( C^\flat)$ (respectively,  $w^{-1}( C^\sharp)$)  is compact for any compact solid set $C$. 
\end{remark}

\begin{theorem} \label{ITw}
Let $X$  and $Y$ be LC spaces. \\
(I). If $q$ is a d-image transformation (respectively, an image transformation) from $X$ to $Y$ 
then there exists a continuous k-proper function $w: Y \longrightarrow X^{\flat}$ (resp.,   $w: Y \longrightarrow X^\sharp$) 
such that $q = w^{-1} \circ \Lambda$; namely, $w = q^* \circ i_Y$.
Here $\Lambda$ is the basic d-image transformation (resp., the basic image transformation).   \\
(II). If $w: Y \longrightarrow X^{\flat}$ (resp., $w: Y \longrightarrow X^\sharp$) is continuous k-proper,  
then $q = w^{-1} \circ \Lambda$ is a d-image transformation
(resp., an image transformation).  We have $w = q^* \circ i_Y$.

Thus, there is a 1-1 correspondence between d-image transformations  (respectively, image transformations)  from $X$ to $Y$  and 
continuous k-proper functions $w: Y \longrightarrow X^{\flat}$ (resp.,   $w: Y \longrightarrow X^\sharp$).
\end{theorem}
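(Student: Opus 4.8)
The plan is to prove Theorem \ref{ITw} in two halves, establishing the two directions of the correspondence, and then to note that the compositions $q \mapsto w \mapsto q$ and $w \mapsto q \mapsto w$ are inverse to each other, which is essentially built into the constructions once both halves are done.

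For part (I), start with a (d-) image transformation $q$ from $X$ to $Y$. Define $w = q^* \circ i_Y$, so that $w(y) = q^*(\delta_y)$, which is a simple (deficient) topological measure on $X$ (the point mass $\delta_y$ pushed through the adjoint lands in $X^\flat$, resp.\ $X^\sharp$, by Theorem \ref{adjCont}). Continuity of $w$ follows since $i_Y$ is continuous and $q^*$ is continuous by Theorem \ref{adjCont}. For k-properness, fix $K \in \mathscr{K}(X)$; I claim $w^{-1}(K^\flat) = \{ y \in Y : \delta_y(q(K)) = 1\} = \{y : y \in q(K)\} = q(K)$, which is compact since $q(K) \in \mathscr{K}(Y)$ by \ref{IT1}. (In the topological-measure case the same computation gives $w^{-1}(K^\sharp) = q(K)$.) Then I must verify $q = w^{-1} \circ \Lambda$, i.e.\ $q(A) = w^{-1}(\Lambda(A)) = w^{-1}(A^\flat) = \{ y : q^*(\delta_y)(A) = 1 \}$ for every $A \in \mathscr{O}(X) \cup \mathscr{K}(X)$. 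Unwinding, $q^*(\delta_y)(A) = \delta_y(q(A))$ when $A$ is open or compact, which is $1$ iff $y \in q(A)$; so $w^{-1}(A^\flat) = q(A)$ exactly. (A small point: for open $A$ one should make sure $q^*(\delta_y)$ evaluated on the open set $A$ is literally $\delta_y(q(A))$ and not the closed-set infimum formula; this is immediate from the definition of $q^*$ in the earlier theorem.) That finishes (I).

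For part (II), start with a continuous k-proper $w : Y \to X^\flat$ (resp.\ $X^\sharp$) and set $q = w^{-1} \circ \Lambda$, so $q(A) = w^{-1}(A^\flat)$ for $A \in \mathscr{O}(X) \cup \mathscr{K}(X)$. I need to check the four axioms \ref{IT1}--\ref{IT2} of Definition \ref{IT}. For \ref{IT1}: if $U$ is open then $U^\flat$ is open in $X^\flat$ (Lemma \ref{XdiezCom}), so $q(U) = w^{-1}(U^\flat)$ is open in $Y$ by continuity of $w$; if $K$ is compact then $K^\flat$ is compact (Lemma \ref{XdiezCom} again) — but we need $w^{-1}(K^\flat)$ compact, which is precisely k-properness. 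For \ref{IT2} (finite additivity of $q$ on disjoint unions): this reduces to the fact, recorded in Example \ref{staIT}, that $\Lambda(A \sqcup B) = A^\flat \sqcup B^\flat$ (resp.\ $\sharp$) — i.e.\ for a simple (deficient) topological measure $\mu$, $\mu(A \sqcup B) = 1$ forces exactly one of $\mu(A), \mu(B)$ to be $1$ — and then $w^{-1}$ preserves disjoint unions since it is an honest preimage map. For \ref{IT3}: $q(U) = \bigcup\{q(K) : K \subseteq U, K \in \mathscr{K}(X)\}$ translates via $w^{-1}$ into $U^\flat = \bigcup\{K^\flat : K \subseteq U, K \in \mathscr{K}(X)\}$, which says: a simple $\mu$ with $\mu(U) = 1$ has $\mu(K) = 1$ for some compact $K \subseteq U$ — true by \ref{TM2} (inner regularity) and the fact $\mu$ only takes values $0,1$. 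For \ref{IT4}: dually, $K^\flat = \bigcap\{U^\flat : K \subseteq U, U \in \mathscr{O}(X)\}$ follows from \ref{TM3} (outer regularity) and simplicity. Finally $w = q^* \circ i_Y$ is forced by the same unwinding as in part (I): $q^*(\delta_y)(A) = \delta_y(q(A)) = \delta_y(w^{-1}(A^\flat)) = 1$ iff $w(y) \in A^\flat$ iff $w(y)(A) = 1$, so $q^*(\delta_y) = w(y)$ since a simple (deficient) topological measure is determined by the sets on which it equals $1$.

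Putting (I) and (II) together gives the bijection: starting from $q$, part (I) produces $w$ with $q = w^{-1}\circ\Lambda$, and starting from that $w$, part (II) recovers the same $q$; conversely starting from $w$, part (II) produces $q$ with $w = q^* \circ i_Y$, and part (I) applied to $q$ recovers the same $w$. The main obstacle I anticipate is not any single deep step but rather keeping the bookkeeping straight between the $\flat$ (deficient / d-image transformation) and $\sharp$ (topological measure / image transformation) cases simultaneously, and being careful that the adjoint $q^*$ evaluated on \emph{open} sets really is given by $\nu \circ q$ directly (so that the identity $w^{-1}(A^\flat) = q(A)$ holds on the nose for open $A$ and not merely up to the outer-regular extension); everything else is a routine translation of the image-transformation axioms into statements about simple (deficient) topological measures via Lemma \ref{XdiezCom} and the regularity axioms \ref{TM2}, \ref{TM3}.
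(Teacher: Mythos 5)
Your proposal is correct and follows essentially the same route as the paper: part (I) via $w=q^*\circ i_Y$ with continuity from Theorem \ref{adjCont}, k-properness from $w^{-1}(K^{\flat})=q(K)$, and the same pointwise unwinding giving $q=w^{-1}\circ\Lambda$; part (II) by verifying the axioms of Definition \ref{IT} through Lemma \ref{XdiezCom}, Example \ref{staIT}, and inner/outer regularity of simple (deficient) topological measures, then recovering $w=q^*\circ i_Y$ by the same computation. Your explicit treatment of the reverse inclusion in \ref{IT4} via \ref{TM3} and of the value of $q^*\delta_y$ on open sets only makes explicit what the paper leaves implicit.
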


We may write $w_y$ for the simple (deficient) topological measure $w(y)$.

\begin{proof}
The proof is basically identical for d-image transformations and image transformations.
We shall show the first part for d-image transformaitons, and the second part for image transformations. \\
(I). By Theorem \ref{adjCont}  $q^*$ is continuous, and so is $w = q^* \circ i_Y: Y \rightarrow X^{\flat}$. 
For $A \in \mathscr{O}(X) \cup \mathscr{K}(X)$,  
$y \in q(A) \Longleftrightarrow \delta_y (q(A)) =1 \Longleftrightarrow q^* \delta_y \in A^{\flat}  \Longleftrightarrow w(y) \in A^{\flat}  
\Longleftrightarrow y \in w^{-1} (A^{\flat})$. Hence, $q = w^{-1} \circ \Lambda$. 
Also, $ w^{-1} (K^{\flat}) = q(K)$ is compact for $K \in \mathscr{K}(X)$. Thus, $w$ is k-proper.   \\
(II). Clearly, $q(K)$ is compact.
To verify Definition \ref{IT}\ref{IT3}, 
take $y \in q(U) = w^{-1}(U^\sharp)$. Then $w_y(U) = 1$, so there is a compact $K \subseteq U$ with $w_y (K) =1$, i.e. $ y \in  w^{-1}(K^\sharp)$. 
Then $q(U)  =  w^{-1}(U^\sharp) \subseteq \bigcup_{K \subseteq U} w^{-1}(K^\sharp) = \bigcup_{K \subseteq U} q(K) \subseteq q(U)$, which gives  part \ref{IT3}. 
If $K \in \mathscr{K}(X)$ and $y \in q(K),$ then $w_y(K) =1$, and so for any open $U$ containing $K$,  $w_y(U) = 1$, i. e. $ y \in w^{-1}(U^\sharp) = q(U)$.  
\ref{IT4} follows. For Definition \ref{IT}\ref{IT2} see Example \ref{staIT}. 
For $A \in \mathscr{O}(X) \cup \mathscr{K}(X)$  and $y \in Y$ we have 
$(q^* \delta_y)(A) = \delta_y(q(A)) = \delta_y(w^{-1}(A^\sharp)) =1$ iff $w(y) \in A^\sharp \Longleftrightarrow w(y)(A) =1$. 
This gives $ q^* \circ i_Y = w$.
\end{proof}

\begin{example} \label{consQst}
Let $X$ be LC, $Y$ be compact, and  $\mu$ be a simple (deficient) topological measure on $X$. 
For $A \in \mathscr{O}(X) \cup \mathscr{K}(X)$ let $q(A) = \emptyset$ if $ \mu(A) = 0$, and let $q(A) = Y$ if $ \mu(A) = 1$.
It is easy to see that $q$ is a (d-) image transformation from $X$ to $Y$. If $ \nu$ is a normalized (deficient) topological measure on $Y$ 
then for any open set $V$ in $X$,
$(q^*\nu)(V) = \nu(q(V)) = \nu(Y) = 1$ if $ \mu(V) =1$, and $(q^*\nu)(V) = 0$ if $ \mu(V) =0$. In other words, $q^*\nu = \mu$.
Thus, on  normalized (deficient) topological measures the adjoint map $q^*$ is a constant map.
\end{example} 

\begin{theorem} \label{ITfromUf}
Suppose $q$ is a (d-) image transformation from $X$ to $Y$, where $X$, $Y$ are LC. TFAE:
\begin{enumerate}[label=(\roman*),ref=(\roman*)]
\item
$q = u^{-1}$ for some proper continuous function $u :Y  \longrightarrow X$.
\item \label{yqx}
$Y = \bigcup_{x \in X} q(\{x \})$.
\item
$q^*(P_e(Y)) \subseteq P_e(X)$.
\end{enumerate}
\end{theorem}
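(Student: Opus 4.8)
The plan is to establish the cycle (i) $\Rightarrow$ (ii) $\Rightarrow$ (iii) $\Rightarrow$ (i), using the correspondence $q \leftrightarrow w = q^* \circ i_Y$ from Theorem \ref{ITw} as the main tool, since condition (ii) translates directly into a statement about where the k-proper function $w$ takes its values.

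First, (i) $\Rightarrow$ (ii) is almost immediate: if $q = u^{-1}$ for a proper continuous $u : Y \to X$, then for each $y \in Y$ we have $y \in u^{-1}(\{u(y)\}) = q(\{u(y)\})$, so $Y = \bigcup_{x \in X} q(\{x\})$. Next, for (ii) $\Rightarrow$ (iii), let $w = q^* \circ i_Y$ as in Theorem \ref{ITw}, so $w_y = q^*(\delta_y)$ is a simple (deficient) topological measure for each $y$. Condition (ii) says that every $y$ lies in $q(\{x\})$ for some $x \in X$; by Theorem \ref{adjCont}, $y \in q(\{x\})$ forces $q^*(\delta_y) = \delta_x$, so $w_y \in P_e(X)$ for every $y$. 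Since $q^*(\delta_y) = w(y)$ and the $\delta_y$ are exactly the elements of $P_e(Y)$, this says precisely $q^*(P_e(Y)) \subseteq P_e(X)$, which is (iii).

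The substantive implication is (iii) $\Rightarrow$ (i), and this is where the main work lies. Assuming $q^*(P_e(Y)) \subseteq P_e(X)$, define $u : Y \to X$ by declaring $u(y)$ to be the unique point of $X$ with $q^*(\delta_y) = \delta_{u(y)}$; uniqueness of the point mass makes $u$ well defined as a set map. I would then verify three things. \emph{Continuity of $u$:} $w = q^* \circ i_Y : Y \to X^{\flat}$ (resp. $X^{\sharp}$) is continuous by Theorem \ref{adjCont}, and $y \mapsto \delta_{u(y)}$ is exactly $w$; composing with the homeomorphism of $X$ onto the subspace $P_e(X) \subseteq X^{\flat}$ of point masses (the map $x \mapsto \delta_x$ is a continuous injection, and one checks it is a homeomorphism onto its image using the basic open sets $U^{\flat}$ from Lemma \ref{XdiezCom} — indeed $x \in U \iff \delta_x \in U^{\flat}$) yields continuity of $u$. \emph{Properness of $u$:} for compact $K \subseteq X$, $u^{-1}(K) = \{y : \delta_{u(y)}(K) = 1\} = w^{-1}(K^{\flat}) = q(K)$, which is compact since $q$ is a (d-) image transformation; alternatively this is immediate from k-properness of $w$. \emph{Identification $q = u^{-1}$:} for $A \in \mathscr{O}(X) \cup \mathscr{K}(X)$, Theorem \ref{ITw} gives $q(A) = w^{-1}(A^{\flat}) = \{y : q^*(\delta_y)(A) = 1\} = \{y : \delta_{u(y)}(A) = 1\} = \{y : u(y) \in A\} = u^{-1}(A)$.

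The main obstacle I anticipate is the continuity argument for $u$ in the locally compact (noncompact) setting: one must check that $x \mapsto \delta_x$ is a homeomorphism of $X$ onto $P_e(X) \subseteq X^{\flat}$ equipped with the weak topology, not merely a continuous bijection — the point being that a net $\delta_{x_\alpha} \Rightarrow \delta_x$ really does force $x_\alpha \to x$ in $X$. This follows from Remark \ref{rmwkbase}: if $x \in U$ with $U$ open, then $\{\mu : \mu(U) > \delta_x(U) - \epsilon\}$ is a weak neighborhood of $\delta_x$, and $\delta_{x_\alpha}$ eventually lies in it, which for $\epsilon < 1$ forces $x_\alpha \in U$ eventually. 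Everything else is bookkeeping with the established correspondence between (d-) image transformations and k-proper maps into $X^{\flat}$ (resp. $X^{\sharp}$).
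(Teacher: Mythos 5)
Your proposal is correct and follows essentially the same route as the paper: the cycle (i)$\Rightarrow$(ii)$\Rightarrow$(iii)$\Rightarrow$(i), with $u$ defined by $q^*(\delta_y)=\delta_{u(y)}$, continuity and properness read off from the k-proper map $w=q^*\circ i_Y$ of Theorem \ref{ITw}, and $q=u^{-1}$ obtained from $q(A)=w^{-1}(A^{\flat})$. Your explicit check that $\delta_{x_\alpha}\Rightarrow\delta_x$ forces $x_\alpha\to x$ is a welcome detail that the paper's proof only asserts.
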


\begin{proof}
(i) $\Longrightarrow$ (ii). For $y \in Y$ let $x =u(y)$. Then $y \in u^{-1}(x) =q(\{x \})$.
(ii) $\Longrightarrow$ (iii).  
Let $y \in Y$. Suppose $y \in q(\{x\})$. By Theorem \ref{adjCont} $ q^*(\delta_y) = \delta_x$.
 (iii) $\Longrightarrow$ (i).  
 For $y \in Y$,  $ q^*(\delta_y) = \delta_x$, and we let $u(y) = x$. That is, $u :Y  \longrightarrow X$ is $u =  i_X^{-1} \circ w$, where 
 $w =q^* \circ i_Y$ is a continuous k-proper function from $Y$ to $X^\sharp$ (or to $X^{\flat}$) from Theorem \ref{ITw}. 
If $y_\alpha \rightarrow y$, then $\delta_{x_\alpha} = q^*(\delta_{y_\alpha })$ converges to $ q^*\delta_y = \delta_x$, 
so $x_\alpha \rightarrow x$,  showing that $u$ is continuous.  
 For a compact $K$ in $X$, $u^{-1}(K) = w^{-1} \circ i_X (K) \subseteq  w^{-1}(K^\sharp)$ (respectively,
 $u^{-1}(K) \subseteq  w^{-1}(K^{\flat})$), a compact set, so $u$ is proper.   By (iii) $w(Y) \subseteq P_e(X)$, so 
 $q(A) = w^{-1}(A^\sharp) = w^{-1}(A^\sharp \cap P_e(X))  (\mbox{resp.}, w^{-1}(A^\flat \cap P_e(X)) )
 =  w^{-1}(i_X(A))   = u^{-1}(A)$ for $A \in \mathscr{K}(X) \cup \mathscr{O}(X)$. Thus, $q= u^{-1}$.
 \end{proof}
  
\noindent
The next theorem provides a tool for constructing image transformations. 
 
\begin{theorem}  \label{ITsolid}
Let $X$ be a LC connected locally connected space, $Y$ be a LC space. 
Suppose $q: \mathscr{A}_{s}^{*}(X) \longrightarrow  \mathscr{O}(Y) \cup \mathscr{K}(Y)$ such that:
\begin{enumerate}[label=(\roman*),ref=(\roman*)]
\item \label{ITsol1}
$ \bigsqcup_{i=1}^n q(C_i) \subseteq q(C)$ whenever $\bigsqcup_{i=1}^n C_i \subseteq C,  \ \  C, C_i \in \mathscr{K}_{s}(X)$; 
\item  \label{ITsol2}
$q(U) \in \mathscr{O}(Y)$ and  $q(U) = \bigcup \{ q(K): \ K \subseteq U , \ K \in \mathscr{K}_{s}(X) \}$ for $U \in \mathscr{O}_{s}^{*}(X)$; 
\item \label{ITsol3}
$q(K) \in \mathscr{K}(Y)$ and  $q(K) = \bigcap \{ q(U) : \  K \subseteq U, \ U \in \mathscr{O}_{s}^{*}(X) \}$ for $ K  \in \mathscr{K}_{s}(X)$; 
\item  \label{Qsolidparti}
$ q(A) = \bigsqcup_{i=1}^n q (A_i)$ whenever $A = \bigsqcup_{i=1}^n A_i, \ \ A , A_i  \in \mathscr{A}_{s}^{*}(X)$.
\end{enumerate}
Then $q$ extends uniquely to an image transformation from $X$ to $Y$. 
\end{theorem}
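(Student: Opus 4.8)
The plan is to reduce the statement to Theorem \ref{ITw}, which identifies image transformations from $X$ to $Y$ with continuous $k$-proper functions $w : Y \to X^\sharp$ via $q = w^{-1} \circ \Lambda$. So the real task is to build, out of the set map $q$ given only on solid sets, a continuous $k$-proper $w : Y \to X^\sharp$ whose associated image transformation restricts to $q$ on $\mathscr{A}_s^*(X)$. The candidate is defined pointwise: for $y \in Y$ let $\mu_y : \mathscr{A}_s^*(X) \to \{0,1\}$ be $\mu_y(A) = 1$ if $y \in q(A)$ and $\mu_y(A) = 0$ otherwise, and let $w(y) = w_y$ be the simple topological measure extending $\mu_y$.

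First I would check that $\mu_y$ is a solid-set function, so that it indeed extends uniquely to a topological measure (\cite[Th. 10.7]{Butler:TMLCconstr}, and the construction discussed after Definition \ref{DeSSFLC}). Hypotheses \ref{ITsol1}--\ref{Qsolidparti} translate directly into \ref{superadd}--\ref{solidparti}: since $\bigsqcup_{i=1}^n q(C_i) \subseteq q(C)$, the $q(C_i)$ are pairwise disjoint, so $y$ lies in at most one of them, whence $\sum_i \mu_y(C_i) \le 1$; and if that sum is $1$ then $y \in q(C_i) \subseteq q(C)$ for the relevant $i$, giving \ref{superadd}. Conditions \ref{ITsol2}, \ref{ITsol3}, \ref{Qsolidparti} yield \ref{regul}, \ref{regulo}, \ref{solidparti} in the same immediate fashion (for \ref{solidparti} again using disjointness of the $q(A_i)$). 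One must also check that the extension stays $\{0,1\}$-valued, so that $w_y \in X^\sharp$: here the connectedness and local connectedness of $X$ enter, since then any compact subset of $X$ lies in a compact solid set (\cite[L. 2.5, L. 3.7]{Butler:TMLCconstr}; cf. Remark \ref{easyKprpr}); hence the union of any finite disjoint family of compact solid sets sits inside a compact solid $C$, and \ref{superadd} forces $\sum_i \mu_y(A_i) \le \mu_y(C) \le 1$, so at most one term is $1$ and the quantities entering the extension formulas stay in $\{0,1\}$. The extension restricts to $\mu_y$ on $\mathscr{A}_s^*(X)$, i.e. $w_y(A) = 1$ iff $y \in q(A)$ for solid $A$.

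Next I would verify that $w : Y \to X^\sharp$ is continuous and $k$-proper. By Lemma \ref{homeom} the sets $U_1^\sharp \cap \dots \cap U_n^\sharp$, $U_i$ bounded open solid, form a base of $X^\sharp$, and $w^{-1}(U_1^\sharp \cap \dots \cap U_n^\sharp) = \bigcap_{i=1}^n \{ y : w_y(U_i) = 1 \} = \bigcap_{i=1}^n q(U_i)$, which is open by \ref{ITsol2}; so $w$ is continuous. By Remark \ref{easyKprpr}, $k$-properness follows once $w^{-1}(C^\sharp) = \{ y : w_y(C) = 1 \} = q(C)$ is compact for compact solid $C$, which is \ref{ITsol3}. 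Then Theorem \ref{ITw}(II) gives that $\widetilde q := w^{-1} \circ \Lambda$ is an image transformation from $X$ to $Y$, and for $A \in \mathscr{A}_s^*(X)$ one has $\widetilde q(A) = w^{-1}(A^\sharp) = \{ y : w_y(A) = 1 \} = q(A)$, so $\widetilde q$ extends $q$. For uniqueness: if $Q$ is any image transformation from $X$ to $Y$ extending $q$, then by Theorem \ref{ITw} $Q = \omega^{-1} \circ \Lambda$ with $\omega = Q^* \circ i_Y$, and $\omega_y(A) = \delta_y(Q(A))$ equals $1$ iff $y \in q(A)$, i.e. $\omega_y$ and $w_y$ agree on $\mathscr{A}_s^*(X)$; since a topological measure is determined by its values on solid sets (\cite[Th. 10.7]{Butler:TMLCconstr}), $\omega_y = w_y$ for all $y$, so $\omega = w$ and $Q = \widetilde q$.

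The step I expect to be the main obstacle is the middle one — verifying that $\mu_y$ is genuinely a solid-set function and, in particular, that its extension remains simple. The axiom translations are mechanical, but the ``stays $\{0,1\}$-valued'' point really uses the hypotheses on $X$ and the shape of Aarnes's extension, and one has to be sure the cited construction theorem returns an extension that restricts back to $\mu_y$ on $\mathscr{A}_s^*(X)$.
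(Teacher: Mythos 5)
Your proposal is correct and follows essentially the same route as the paper: define $w_y(A)=\delta_y(q(A))$ on solid sets, check via the hypotheses that it is a $\{0,1\}$-valued solid-set function extending to a simple topological measure, verify continuity of $w$ through Lemma \ref{homeom} and k-properness through Remark \ref{easyKprpr}, and conclude by the correspondence in Theorem \ref{ITw}. Your extra care about the extension remaining simple and your explicit uniqueness argument are details the paper leaves implicit, but they do not change the method.
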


\begin{proof}
Let $y \in Y$. Define a set function $w_y$ on  $\mathscr{A}_{s}^{*}(X)$ by $w_y(A) = \delta_y(q(A))$. 
By Definition \ref{DeSSFLC} it is easy to check that $w_y$ is a solid-set function on $X$, hence it extends to a unique simple topological 
measure on $X$, which we also call $w_y$. Thus, we obtain a map $w:Y \longrightarrow X^\sharp$, where $w(y) = w_y$.
If $ A \in \mathscr{A}_{s}^{*}(X)$ then 
\begin{align} \label{qaso}
w^{-1}(A^\sharp) = \{ y: w_y(A) = 1\} = \{ y: y \in q(A) \} = q(A).
\end{align}
By Lemma  \ref{homeom} we may consider subbasic open sets in $X$ of the form $U^\sharp$ for $U \in \mathscr{O}_s^*(X)$. Then
$w^{-1}(U^\sharp) = q(U)$ is an open set, and  $w$ is continuous.   
By Remark \ref{easyKprpr} $w$ is k-proper. 
By Theorem \ref{ITw}, $w$ gives a unique image transformation from $X$ to $Y$, which by (\ref{qaso})
coincides with $q$ on  $\mathscr{A}_{s}^{*}(X)$.
\end{proof} 

\begin{remark} \label{easysolpar}
If $X$ is compact,  in Theorem \ref{ITsolid} it is enough to consider one of the equivalent conditions \ref{ITsol2} and \ref{ITsol3}.  
If $X$ is a noncompact LC connected locally connected space whose one-point compactification has genus 0, then
Theorem \ref{ITsolid}\ref{Qsolidparti} holds automatically by \cite[L. 15.2]{Butler:TMLCconstr}.  
If $X$ is a q-space, then for Theorem \ref{ITsolid}\ref{Qsolidparti} we only need to show that 
$q(X) = q(A) \sqcup q(X \setminus A)$ for partitions  $X = A \sqcup (X \setminus A)$, where $A$ is a closed solid or an open solid set.
(First, replacing $\lambda$ by $q$, 
a minor modification of the argument of \cite[L. 12.3, part 4.]{Butler:TMLCconstr} shows that 
it is enough to consider so-called irreducible partitions of $X$, but by \cite[Rem. 11.6 (a4)]{Butler:TMLCconstr} any such partition has the form 
$X = A \sqcup (X \setminus A)$. See \cite[Sect.11]{Butler:TMLCconstr} for more detail.) 
\end{remark}

\begin{example} \label{3pts2vIT}
Let $X = Y$ be the unit square in $\mathbb{R}^2$. Let $E = \{x, z\}$, where $x, z\in X$. As in \cite[p. 46]{AarnesGrubb}, we consider the following map $q$.
For an open or closed solid set $A$ define $q(A) = 0$ if $| A \cap E| = 0$; $q(A) =A $ if $| A \cap E| = 1$; $q(A) = X$ if $| A \cap E| = 2$. 
It is easy to see that $q$ satisfies Theorem \ref{ITsolid}, hence, extends to an image transformation from $X$ to $X$.
Calculating $q(A)$ for a nonsolid set $A$ mimics finding a topological measure on nonsolid sets and uses topological facts, 
see \cite{Aarnes:ConstructionPaper},  \cite[Rem. 6.3]{Butler:TMLCconstr}.  
For instance, if $C$ a closed connected set, then connected components $V_{\alpha}$ of  $X \setminus C$ are open solid sets (\cite[L. 3.2]{Aarnes:ConstructionPaper}), 
and from Lemma \ref{ITsvva} we see that $q(X \setminus C) = \bigcup q(V_{\alpha}).$  
If $x, z \in C$ then $q(X \setminus C) = \emptyset$, and $q(C) = X$.
If $x, z \notin C$, and 
$x, z$ are in the same component of $X \setminus C$, then $q(X \setminus C) =X$, and $q(C) = \emptyset$.
If $x, z \notin C$, and 
$x, z$ are not in the same component of $X \setminus C$, then $q(X \setminus C)$ is the union of two components that contain $x$ and $z$, hence, 
$ q(C)$ is the union of $C$ with those components of $X \setminus C$ which contain neither $x$ nor $z$. 

Let $w: X \longrightarrow X^\sharp$ be the corresponding continuous k-proper function. For $y \in Y$, the simple topological measure $w_y$ 
acts on solid (open or closed) sets as follows: 
$w_y(A) = 1$ iff $y \in w^{-1}(A^\sharp) $ iff $y \in q(A)$. This means that $w_y(A) = 1$ iff $| A \cap E| = 2$; or $| A \cap E| = 1$ and $y \in A$. 
Thus, $w_y$ is the topological measure from Example \ref{nvssf} for the set $P = \{ x,z,y\}$. 
If $ \mu $ is a finite topological measure on $X$, the adjoint $q^*$  gives a new topological measure $q^*\mu$, which acts on $A \in \mathscr{A}_{s}^{*}(X)$ 
as follows: $(q^*\mu)(A) = 0$ if $| A \cap E| = 0$;  $(q^*\mu)(A) = \mu(A)$ if $| A \cap E| = 1$;  $(q^*\mu)(A) = \mu(X)$ if $| A \cap E| = 2$.
\end{example}

\begin{example} \label{ITAarnesTM}
Let $X = Y$ be the unit disk in $\mathbb{R}^2$, and let $B$ be a closed subset of $X$.  
For  open solid or closed solid set $A$ define 
$q(A) = 0$ if $ A \cap B = \emptyset$;  $q(A) = X$ if $B \subseteq A$; and $q(A) =A $ otherwise. 
(With $B$ consisting of two points we obtain Example \ref{3pts2vIT}.) 
Then $q$ extends to an image transformation on $X$.
(This and similar examples appear in several papers, including 
\cite{OrjanAlf:HomSimpleTM}, \cite{AarnesGrubb}, \cite{Aarnes:ITfirst}.)  
If $ \mu \in B^\sharp$ then $q^*(\mu) = \mu$, and $q^*$ is a retraction of $X^\sharp$ onto $B^\sharp$ (see \cite[L. 28. Th. 29]{OrjanAlf:HomSimpleTM}). 
Using such image transformations one can show that if $ A \in \mathscr{O}(X) \cup \mathscr{C}(X)$ is connected then $A^\sharp$ is also connected for any q-space $X$ 
(see \cite[Cor. 32]{OrjanAlf:HomSimpleTM}).

Let $B$ be the boundary of $X$. 
For a closed connected  set $C$ which intersects, but does not contain $B$,  reasoning as in Example \ref{3pts2vIT} 
shows that $q(C)$ is the union of $C$ with connected components of $ X \setminus A$ which do not intersect $B$.   
Let $w: X \longrightarrow X^\sharp$ be the continuous function corresponding to $q$. 
Let $y \in Y$. Since $w_y(A) = 1$  iff $y \in q(A)$, we see that 
$w_y$ is the Aarnes circle topological measure from Example \ref{Aatm} with $p = y$. 
Any closed set $K$ such that does not intersect the boundary $B$ and has diameter $ diam(K) < 1/2$ is contained in a disk $D$  with $diam (D) < 1$. 
Then $w_y(D) = 0$ for $y \in D$, and $q(D) = \emptyset$. 
This shows that $q$ (and any topological measure of the form $q^*\mu$) 
annihilates any closed set that does not intersect the boundary $B$ and has diameter less than $1/2$. 
\end{example} 

\begin{example}  \label{3pts2vW}
Let $X = Y = \mathbb{R}^2$. For $(x, y) \in \mathbb{R}^2$ 
let $\mu_{x, y} $ be the simple topological measure on $X$ constructed as in Example \ref{nvssf} using points
$(x,0), (x,y), (y,0)$. (A similar example when $X$ is compact is \cite[Ex. 3.7(c)]{Aarnes:ITfirst}.)
Using  Lemma  \ref{homeom} and Remark \ref{easyKprpr} it is not hard to check that  
$w: (x,y) \rightarrow \mu_{x, y}$ of $Y$ into $X^\sharp$ is continuous and k-proper.  
By Theorem \ref{ITw} $q = w^{-1} \circ \Lambda$ is an image transformation. Note that $q(A) = \emptyset$ for any bounded open solid set or
compact solid set $A$ which does not intersect the axes. 
\end{example}

\begin{example} \label{ITqfunction}
Let $X$ be a square in $\mathbb{R}^2$. Let $m$ be the normalized Lebesgue measure on $X$. For $ \epsilon < 1/2$ 
define the map $q_{\epsilon}$ on closed solid sets by
$q_{\epsilon}(C) = 0$ if $m(C) < \epsilon$;  $q_{\epsilon}(C) = C$ if $\epsilon \le m(C) < 1-\epsilon$; $q_{\epsilon}(C) = X$ if $m(C) \ge 1- \epsilon$. 
As shown in \cite[L. 51]{AarnesJohansenRustad}, 
$q_{\epsilon}$ extends to an image transformation on $X$. Note that $q_{\epsilon}^* m$ 
is the topological measure $ h \circ m$, constructed using a q-function $h$, 
where $h(t) = 0$ for $ t \in (0, \epsilon)$;  $h(t) = t$ for $ t \in [\epsilon, 1 - \epsilon)$; and 
$h(t) = 1$ for $ t \in [1-\epsilon, 1]$. (See \cite[Th. 3.5, Def. 3.1]{QfunctionsEtm} for details). 
This example works for any q-space $X$ and any normalized topological measure on $X$ 
whose split spectrum does not include $\epsilon $, see \cite[L. 51, Def. 6, Def. 11]{AarnesJohansenRustad} and \cite[Sect. 3]{QfunctionsEtm}.
\end{example} 

\begin{example} \label{HaarNot1} 
Let $ X = Y = \mathbb{R}^2$, and let $ \mu_{p, \epsilon}$ be a topological measure on $X$ from Example \ref{Aatm}. 
Let $w_\epsilon:Y \longrightarrow X^\sharp$  be given by $w_\epsilon(p) = \mu_{p, \epsilon}$. 
By Lemma  \ref{homeom} and Remark \ref{easyKprpr} we see that
$w_\epsilon$ is a continuous $k$-proper map, hence, $q_\epsilon = w_\epsilon^{-1} \circ \Lambda$ is an image transformation on $\mathbb{R}^2$, and 
$q_\epsilon(A) = \{p: \mu_{p, \epsilon}(A) = 1\}$. For $\epsilon = 0$ we have $q_0(A) = A$.  
Let $ \epsilon >0$. 
For  a closed square $A$,  $q_\epsilon(A) $ is four corners of $A$ if $ diam(A) = \epsilon$, and $q_\epsilon(A) = A$ if  $ diam(A) \ge 2 \epsilon$. 
If $A$ is any set with  $ diam(A) < \epsilon$ then $q_\epsilon(A) =\emptyset$.  
Image transformations that "kill" sets of certain size intuitively reflect the fact that when we deal with images infinite resolution 
is not realistic.

Let $s_t:X \longrightarrow X$ be the translation $s_t(x) = x+ t$. We see that
$q_{\epsilon} \circ s_t^{-1} =  s_t^{-1} \circ  q_{\epsilon} $. 
Using the Lebesque measure $m$ on $X$, we obtain a family $ \{ q_\epsilon^* m = m \circ q_\epsilon, \  \epsilon >0 \}$
of invariant topological measures on $X$. Thus, uniqueness of Haar measure on LC topological group 
does not hold in the more general context of topological measures, compared to measures.
This example generalizes \cite[Ex. 3.7(d)]{Aarnes:ITfirst}  and \cite[Ex. D]{AarnesTaraldsen}. 
The non-uniqueness of Haar measures is also in \cite[p. 2072]{Grubb:IrrPart}. 
\end{example}

\begin{example}
Let $G$ be a compact group, and $\mu$ be a simple deficient topological measure on $G$. With $q(A) = \{ g \in G : \mu(g^{-1}A) = 1\}$, as in 
\cite[Ex.1]{AarnesGrubb} we see that $q$ is a d-image transformation from $G$ to $G$, and if $\lambda$ is a left Haar measure on $G$ 
then $ \nu = q^*\lambda$ is not a Haar measure on $G$.
\end{example}

\begin{remark}
There is a generalization of the distribution of the sample median which is 
equivariant with respect to a large collection of transformations. This generalization
is given by the adjoint of an image transformation.  See   \cite{AlfMultidimMedian}, \cite{AlfImTrans}, \cite{AlfMedian}.
\end{remark} 

\section{Quasi-linear maps} \label{SectQLM}

\begin{definition} \label{qhDef}
Suppose $X$ is LC and $E$ is a vector space over $\mathbb{R}$ equipped with a norm or an extended norm. 
A map $\theta: C_0(X) \longrightarrow E $ is called signed quasi-linear if  it is
linear on each subalgebra $B(f)$, $f \in C_0(X)$. 
If $E$ is ordered we say that $ \theta$ is positive if $ \theta(f) \ge 0$ for each $ f \ge 0$.
A signed quasi-linear map which is positive is called a quasi-linear map or a positive quasi-linear map;
it is called a quasi-homomorphism if it is also multiplicative on each subalgebra $B(f)$, $f \in C_0(X)$.
For an ordered $E$, a map $\theta$ is called conic quasi-linear
if (i) $0 \le g \le f$ implies $\theta(g) \le \theta(f)$;  (ii) $f\, g=0, \, f, g  \ge 0$ implies $ \theta(f+ g) = \theta(f) + \theta(g)$; 
(iii) $\theta(a g + bh) = a \theta(g) + b \theta(h)$ for $g,h \in A^+(f), \ a,b \ge 0$ (for each $f \in C_0(X)$). 
A conic quasi-linear map is called a conic quasi-homomorphism if it is also multiplicative on each cone $A^+(f)$.
When $E= C_0(Y), Y$ is LC, 
a quasi-linear (respectively, a conic quasi-linear) map $\theta$ is called strong if $f\, g=0, \, f, g  \in C_0^+(X)$ implies $ \theta(f) \theta(g) = 0$, and 
 $\theta (B(f) ) \subseteq B( \theta(f))$ (resp.,  $\theta (A^+(f) ) \subseteq A^+( \theta(f))$ for each $f \in C_0(X)$).
 \end{definition} 

As $E$ we may consider $ \mathbb{R}, C_0(Y), C_b (Y), C(Y)$  for a LC space $Y$.
When $E = \mathbb{R}$, a signed quasi-linear (conic quasi-linear, quasi-linear) map is a signed quasi-linear functional 
(p-conic quasi-linear functional, quasi-linear functional).
(Conic) quasi-linear maps produce (p-conic) quasi-linear functionals in a couple of ways, see Lemma \ref{ThetElPro}\ref{1inducedQLF} and
Lemma \ref{QLMcompo} below.

\begin{definition} \label{normthet}
For a (conic) quasi-linear map  $\theta$ 
we define $ \| \theta \| = \sup  \{   \| \theta(f) \| : \  0 \le f \le 1\}.    $   
If $\| \theta \| < \infty$ we say that $ \theta$ is bounded. 
\end{definition}

\begin{remark} \label{Qenorm}
For conic quasi-linear maps,  $ \| \theta \| $ satisfies the following properties: $ \| \alpha \theta\|  = \alpha  \| \theta \|  $ for $ \alpha > 0$, 
$ \| \theta \|  = 0$ if $ \theta = 0$, and $ \| \theta+ \eta \|  \le  \| \theta \|  +  \| \eta \|$. 
For quasi-linear maps, $ \| \theta \| = \sup  \{  \| \theta(f) \| : \  0 \le f \le 1\} =  \sup  \{ \| \theta(f) \|: \  \| f \| \le 1 \} $ is an extended norm; see the next lemma. 
\end{remark}

\begin{lemma} \label{ThetElPro}
Suppose $\theta: C_0(X) \longrightarrow C(Y)$ is a (conic) quasi-linear map, $X, Y $ are LC.
\begin{enumerate}[leftmargin=0.25in, label=(\alph*),ref=(\alph*)]
\item  
$\theta(0) = 0$.
\item \label{1inducedQLF}
For  $y \in Y$ the map $\rho: C_0(X) \longrightarrow \mathbb{R}$ given by $\rho(f) = \theta(f)(y)$ is a real-valued (p-conic) quasi-linear functional.
If $ \theta$ is a (conic) quasi-homomorphism, then $\rho$ is simple, and $ \| \theta \| \le 1$.
\item \label{qlmADD}
Suppose  $\theta$ is a quasi-linear map. If $ f g = 0, \, f, g \in C_0(X)$ then  
$\theta(f+ g) = \theta(f) + \theta(g)$. In particular, $\theta(f)  = \theta(f^+) - \theta(f^-).$
\item \label{thnorm2de}
 $\| \theta(f) \| \le \max \{ \| \theta(f^+)\|,  \| \theta(f^-) \| \}$  and 
$ \| \theta \| = \sup  \{  \| \theta(f) \| : \  0 \le f \le 1\} =  \sup  \{ \| \theta(f) \|: \  \| f \| \le 1 \}   $ for a quasi-linear map $ \theta$.
\item \label{qlmMON}
A quasi-linear map $\theta$  is monotone on $C_c(X)$, i.e. $ f \le g, f, g \in C_c(X)  \Longrightarrow  \theta(f) \le \theta(g)$.
\item \label{qlmMONb}
A bounded quasi-linear map $\theta$  is monotone, i.e. $ f \le g, f, g \in C_0(X)  \Longrightarrow  \theta(f) \le \theta(g)$.
\end{enumerate}
\end{lemma}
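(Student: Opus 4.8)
\emph{Proof proposal.} For $y\in Y$ write $\rho_y$ for the functional $f\mapsto\theta(f)(y)$ on $C_0(X)$. Since an element of $C(Y)$ is $\ge 0$ (respectively, equals a given element) exactly when this holds after evaluation at each point of $Y$, the plan is to prove (a) and (b) directly and then obtain (c)--(f) with the help of known facts about (p-conic) quasi-linear \emph{functionals} applied to each $\rho_y$. For (a): with $f=0$ the constant function $0$ lies in $B(0)$ (in $A^{+}(0)$ in the conic case), so $\theta(2\cdot 0)=2\,\theta(0)$ by linearity on $B(0)$ (by positive homogeneity on $A^{+}(0)$), forcing $\theta(0)=0$. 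For (b), fix $y$ and evaluate the defining conditions of Definition~\ref{qhDef} at $y$: in the conic case (i), (ii), (iii) become (p2), (p1), (p3) of Definition~\ref{cqlf} for $\rho_y$, and $|\rho_y(0)|=0$; in the quasi-linear case positivity and linearity on each $B(f)$ pass to $\rho_y$. If moreover $\theta$ is a (conic) quasi-homomorphism, multiplicativity on each $B(f)$ (each $A^{+}(f)$) also passes to $\rho_y$, so by the equivalence of simplicity with multiplicativity on each singly generated subalgebra (resp.\ cone) in Theorem~\ref{rhosimple} the functional $\rho_y$ is simple; then Theorem~\ref{rhosimple} gives $\rho_y(f)\in f(X)$, hence $|\theta(f)(y)|\le\|f\|$ for every $y$, and therefore $\|\theta\|=\sup_{0\le f\le1}\|\theta(f)\|\le 1$.

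For (c) I will first record that $f^{+}=\phi\circ f$ and $f^{-}=\psi\circ f$ with $\phi(t)=\max\{t,0\}$ and $\psi(t)=\max\{-t,0\}$ (both vanishing at $0$), so $f^{+},f^{-}\in B(f)$, and linearity on $B(f)$ applied to $f=f^{+}-f^{-}$ yields $\theta(f)=\theta(f^{+})-\theta(f^{-})$; this is the ``in particular'' clause. If $fg=0$ then $Coz\,f$ and $Coz\,g$ are disjoint, so $(f+g)^{+}=f^{+}+g^{+}$ and $(f+g)^{-}=f^{-}+g^{-}$, and it remains to see $\theta(f^{+}+g^{+})=\theta(f^{+})+\theta(g^{+})$ and the analogue for the negative parts. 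Evaluating at $y$, these are the assertions of orthogonal additivity of the (p-conic) quasi-linear functional $\rho_y$ on nonnegative functions with disjoint cozero sets, which holds by Remark~\ref{RemBRT}\ref{RHOsvva}(e) (equivalently, via the representation of $\rho_y$ by a topological measure, see \cite{Butler:QLFLC}). Recombining and using $\theta(h)=\theta(h^{+})-\theta(h^{-})$ for $h=f,g,f+g$ gives $\theta(f+g)=\theta(f)+\theta(g)$. I expect this to be the main obstacle: orthogonal additivity is not formal from linearity on singly generated subalgebras, and invoking it correctly for a $\rho_y$ that need not be bounded is the delicate point.

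Part (d) then follows quickly. As $\theta$ is positive, $\theta(f^{+}),\theta(f^{-})\ge 0$, so from $\theta(f)=\theta(f^{+})-\theta(f^{-})$ and $|a-b|\le\max\{a,b\}$ for $a,b\ge 0$ we get $|\theta(f)(y)|\le\max\{\theta(f^{+})(y),\theta(f^{-})(y)\}$ for all $y$, hence $\|\theta(f)\|\le\max\{\|\theta(f^{+})\|,\|\theta(f^{-})\|\}$. For the norm identity, every $f$ with $0\le f\le 1$ satisfies $\|f\|\le 1$, giving ``$\le$'', while if $\|f\|\le 1$ then $0\le f^{\pm}\le 1$, so $\|\theta(f)\|\le\max\{\|\theta(f^{+})\|,\|\theta(f^{-})\|\}\le\sup_{0\le g\le1}\|\theta(g)\|$, giving ``$\ge$''.

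Finally, for (e) I will again use (b): $\rho_y$ is a quasi-linear functional, and quasi-linear functionals on $C_0(X)$ are monotone on $C_c(X)$ (see \cite{Butler:QLFLC}), so $f\le g$ with $f,g\in C_c(X)$ gives $\theta(f)(y)\le\theta(g)(y)$ at every $y$, i.e.\ $\theta(f)\le\theta(g)$. For (f), boundedness of $\theta$ together with (d) gives $\|\rho_y\|\le\|\theta\|<\infty$ for each $y$, so $\rho_y$ is a bounded quasi-linear functional, hence (Remark~\ref{RemBRT}) a quasi-integral with respect to a finite topological measure, and thus monotone on all of $C_0(X)$ by Remark~\ref{RemBRT}\ref{RHOsvva}(c); evaluating at every $y$ yields $\theta(f)\le\theta(g)$ whenever $f\le g$ in $C_0(X)$.
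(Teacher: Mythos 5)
Your proposal is correct and follows essentially the same route as the paper: evaluate at each $y\in Y$, observe that $\rho_y(f)=\theta(f)(y)$ is a (p-conic) quasi-linear functional, and then import the known facts about such functionals (Theorem \ref{rhosimple} for simplicity and the norm bound in (b), monotonicity results for (e) and (f)), with (a) and (d) handled by short direct arguments. The only point worth flagging is (c): the orthogonal additivity you need for $\rho_y$ on nonnegative functions is justified in your write-up via Remark \ref{RemBRT}\ref{RHOsvva}(e), i.e.\ via the representation by a finite topological measure, which presupposes boundedness of $\rho_y$ --- and, as you yourself note, $\rho_y$ need not be bounded here. The paper closes exactly this point by citing \cite[L.~2.2(q2)]{Butler:QLFLC}, which gives $fg=0\Rightarrow\rho(f+g)=\rho(f)+\rho(g)$ for an arbitrary quasi-linear functional (no boundedness, no sign restriction), so your fallback reference to \cite{Butler:QLFLC} is the right one and should replace the appeal to Remark \ref{RemBRT}; with that substitution your positive/negative-part decomposition in (c) (and hence (d)) goes through, and the remaining parts are as in the paper.
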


\begin{proof}
In part \ref{1inducedQLF} for a (conic) quasi-homomorphism, by Theorem \ref{rhosimple} $ \rho$ is simple, 
and then by Remark \ref{RemBRT}\ref{RHOsvva} $ | \theta(f) (y) | \le 1$ for each $y$
and each $ 0 \le f \le 1$. 
For parts  \ref{qlmADD}, \ref{qlmMON} and  \ref{qlmMONb}, for each $y \in Y$ apply \cite[L. 2.2(q2), L. 2.2(iii), L. 4.2]{Butler:QLFLC} to
quasi-linear functional $\rho(f) = \theta(f)(y)$.
The rest is easy. 
\end{proof} 

\noindent
Now, for $f \in C_0(X)$ consider the map $\tilde f:  \mathbf{DTM}(X)  \longrightarrow \mathbb{R}$ given by $ \tilde f (\mu) = \int_X f \, d\mu$. 

\begin{remark} \label{fhatLin}
Note that $\tilde f $ is continuous on $ \mathbf{DTM}(X)$.  Also,  $ \tilde f (\mu + \nu) = \tilde f(\mu) + \tilde f(\nu)$ and 
$\tilde f (t \mu) = t \tilde f (\mu)$ for $ t \ge 0$.
\end{remark}

The next result gives an example of a quasi-linear map and a conic quasi-linear map. 

\begin{proposition} \label{Psifhat}

(I) Let $\Psi: C_0(X) \longrightarrow C( \mathbf{TM}(X) )$ be given by $\Psi(f) = \tilde f$. 
Then $\Psi$ is an order preserving injective quasi-linear map.
If   $\tilde f $ is defined on $X^\sharp$ then $\Psi: C_0(X) \longrightarrow C(X^\sharp)$ is an isometric quasi-homomorphism, and 
$\Psi ( C_0(X))$ is closed. 
(II) If $\Psi: C_0(X) \longrightarrow C( \mathbf{DTM}(X))$, $\Psi(f) = \tilde f$, 
then $\Psi$ is an order preserving injective conic quasi-linear map.
If   $\tilde f $ is defined on $X^{\flat}$ then  $\Psi: C_0(X) \longrightarrow C(X^{\flat})$ is an isometric conic quasi-homomorphism, and 
 $\Psi (C_0(X))$ is closed. 
\end{proposition}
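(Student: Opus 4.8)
The plan is to verify every structural property of $\Psi$ by fixing a (deficient) topological measure $\mu$ and transferring the already‑known properties of the quasi‑integral $\mathcal{R}_\mu$ (collected in Remark~\ref{RemBRT}\ref{RHOsvva} and, for the multiplicativity, in Theorem~\ref{rhosimple}). I would carry out part (I) in full and then observe that part (II) is obtained verbatim by replacing Theorem~\ref{rhosimple}(A) with Theorem~\ref{rhosimple}(B), the subalgebras $B(f)$ with the cones $A^+(f)$, $X^\sharp$ with $X^\flat$, and ``quasi-linear/quasi-homomorphism'' with ``conic quasi-linear/conic quasi-homomorphism''; the underlying functional $\mathcal{R}_\mu$ is a quasi-linear functional when $\mu\in\mathbf{TM}(X)$ and a p-conic quasi-linear functional when $\mu\in\mathbf{DTM}(X)$, which is exactly the distinction between the two cases.

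\textbf{Part (I), general statement.} First I would note $\Psi$ is well defined: for $f\in C_0(X)$ the function $\tilde f$ is real-valued by (\ref{estin}) and continuous on $\mathbf{TM}(X)$ by Remark~\ref{fhatLin}, so $\Psi(f)\in C(\mathbf{TM}(X))$. Order preservation and positivity are immediate from monotonicity of the quasi-integral (Remark~\ref{RemBRT}\ref{RHOsvva}): $f\le g$ (resp.\ $f\ge0$) gives $\int f\,d\mu\le\int g\,d\mu$ (resp.\ $\ge 0$) for every $\mu$, hence $\Psi(f)\le\Psi(g)$ (resp.\ $\Psi(f)\ge 0$). For quasi-linearity, fix $f$, take $u,v\in B(f)$ and $a,b\in\mathbb{R}$, evaluate at an arbitrary $\mu\in\mathbf{TM}(X)$, and use that $\mathcal{R}_\mu$ is a quasi-linear functional, hence linear on $B(f)$: this yields $\Psi(au+bv)(\mu)=a\Psi(u)(\mu)+b\Psi(v)(\mu)$, so $\Psi$ is linear on each $B(f)$. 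For injectivity I would test against point masses: $\tilde f(\delta_x)=\int f\,d\delta_x=f(x)$ (the quasi-integral against a measure is the ordinary integral), so $\Psi(f)=\Psi(g)$ forces $f=g$.

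\textbf{Part (I), the statement on $X^\sharp$.} Restricting $\Psi$ to $X^\sharp$ preserves quasi-linearity, order preservation and injectivity (the restriction map $C(\mathbf{TM}(X))\to C(X^\sharp)$ is linear and order preserving, and all $\delta_x$ lie in $X^\sharp$). The crux is the isometry. The inequality $\|\Psi(f)\|\le\|f\|$ follows from $\bigl|\int f\,d\mu\bigr|\le\|f\|\,\mu(X)\le\|f\|$ by (\ref{estin}), since a simple topological measure has $\mu(X)\in\{0,1\}$; the reverse $\|\Psi(f)\|\ge\|f\|$ follows from $\tilde f(\delta_x)=f(x)$ together with $\delta_x\in X^\sharp$ and $\|f\|=\sup_{x}|f(x)|$. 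Multiplicativity on each $B(f)$ is exactly Theorem~\ref{rhosimple}(A)\ref{mult} applied measure by measure: for $\mu\in X^\sharp$ the functional $\mathcal{R}_\mu$ is simple, so $\mathcal{R}_\mu(uv)=\mathcal{R}_\mu(u)\,\mathcal{R}_\mu(v)$ for $u,v\in B(f)$, hence $\Psi(uv)=\Psi(u)\Psi(v)$ pointwise on $X^\sharp$; combined with quasi-linearity this makes $\Psi$ a quasi-homomorphism (and, by Lemma~\ref{ThetElPro}\ref{1inducedQLF} or directly from the isometry, $\|\Psi\|\le1$ consistency is automatic). Finally $\Psi(C_0(X))$ is closed because $\Psi$ is an isometry from the Banach space $C_0(X)$ into the Banach space $C(X^\sharp)$ ($X^\sharp$ being compact Hausdorff by Lemma~\ref{XdiezCom}), so its image is complete, hence closed. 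For part (II) the only change of substance is that multiplicativity on $A^+(f)$ comes from Theorem~\ref{rhosimple}(B)\ref{mult1} and the bound $\mu(X)\le1$ holds for simple \emph{deficient} topological measures as well.

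\textbf{Expected obstacle.} There is no deep obstacle; the proof is a measure-by-measure reduction. The only points demanding a word of care are (a) the identity $\int f\,d\delta_x=f(x)$, i.e.\ that the quasi-integral collapses to the ordinary integral against a measure (needed both for injectivity and for the lower isometry bound), and (b) recording that simple (deficient) topological measures satisfy $\mu(X)\le1$ so that (\ref{estin}) gives the upper isometry bound; everything else is a direct citation of Remark~\ref{RemBRT} and Theorem~\ref{rhosimple}.
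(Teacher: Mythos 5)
Your proposal is correct and follows essentially the same route as the paper's proof: a measure-by-measure reduction to the known properties of quasi-integrals (Remark \ref{RemBRT}, Theorem \ref{rhosimple}), point masses for injectivity and the lower isometry bound, the estimate (\ref{estin}) for the upper bound, and completeness of $C_0(X)$ for closedness of the image. No gaps; the handling of part (II) by substituting p-conic functionals and cones matches the paper's treatment.
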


\begin{proof}
(I).
If $ f \le g, \ f,g \in C_0(X)$ then by Remark \ref{RemBRT}\ref{RHOsvva} $\int f \, d\mu \le \int g \, d \mu $,
i.e. $ \tilde f (\mu) \le \tilde g (\mu) $  for any $ \mu$, i.e. $ \tilde f \le \tilde g$,
and $ \Psi$ is order-preserving. 
If $f(x) \neq g(x)$ then $\tilde{f} (\delta_x)  \ne \tilde{g} (\delta_x)$, so $\Psi$ is injective. 
Let $f,g \in B(h)$ for  $ h \in C_0(X)$. For any  $ \mu \in \mathbf{TM}(X) $
\begin{align*} 
\Psi(f + g) (\mu) &= \int_X (f+g) \, d\mu = \int_X f \, d\mu + \int_X g \, d\mu 
= \tilde f (\mu) + \tilde g (\mu)  \\ 
&= \Psi(f) (\mu) + \Psi(g)(\mu),
\end{align*}
so $ \Psi(f + g)  = \Psi(f) +  \Psi(g)$. Then $\Psi$ is linear on each subalgebra. 
If $ \mu $ is simple,  by Theorem \ref{rhosimple} 
$$ \Psi(fg) (\mu) = \int_X fg \, d\mu =( \int_X f\, d\mu)(\int_X g \, d\mu) = \Psi(f)( \mu) \Psi(g) (\mu),$$
i.e. $ \Psi$ is also multiplicative on each subalgebra. So $\Psi$ is a quasi-homomorphism. 

For a simple $ \mu$, by Remark \ref{RemBRT}\ref{RHOsvva} $| \int f \, d\mu | \le \| f \|$, 
so $ \| \tilde{f} \| = \sup \{| \int f \, d\mu | :   \mu \mbox {   is simple  } \} \le \| f\|$. Let $x$ be the point of maximum of $|f|$; then 
 $ \| \tilde{f}  \|  \ge | \tilde{f}( \delta_x)  | = |f(x)| = \| f \|$. Thus,  $\Psi$ is isometric. 
 Since $C_0(X)$ is complete,  $\Psi ( C_0(X))$ is closed.  
 
(II). Is proved similarly, noting that integration with respect to a finite deficient topological measure is a p-conic quasi-linear functional.  
 \end{proof}
 
\begin{remark}
Integration with respect to a (deficient)  topological measure $\mu$ is not linear in general,
i.e. $\int_X (f+g) \, d\mu \neq \int_X f\, d\mu + \int_X g\, d\mu$ for some $f,g \in C_0(X)$. Then $\Psi$ is not linear: 
$\Psi(f+g) (\mu) \neq \Psi(f) (\mu) + \Psi(g) (\mu)$, so $\Psi(f+g) \neq \Psi(f) + \Psi(g)$.  In \cite{Aarnes:Pure} $\Psi$ is called a nonlinear Gelfand transform.
\end{remark}

\begin{definition}
A (deficient) topological measure is called representable if it is in the closed convex hull of simple (deficient) topological measures. 
A quasi-integral, corresponding to a representable (deficient) topological measure is called representable.
\end{definition}

\noindent
As we shall see in Section \ref{ITqhCorr}, $\Psi$ plays a special role for representable quasi-linear functionals. 

\begin{theorem} \label{normbddOP}
Suppose $X$ is a LC space,  $E$ is a normed space, and $ \theta: C_0(X) \longrightarrow E$  
is a (nonlinear) operator
such that $\theta (af) = a \theta(f)$ for any $a \ge 0$, and 
$0 \le f \le g$ implies $ \| \theta(f ) \| \le \| \theta(g) \|$. Then 
$\| \theta \| = \sup  \{  \| \theta(f) \| : \  0 \le f \le 1\} < \infty$.  
\end{theorem}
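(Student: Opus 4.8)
The plan is to argue by contradiction via a gliding-hump (condensation of singularities) construction; the classical uniform boundedness argument is unavailable here because $\theta$ need not be linear or even additive. Observe first that $\|\theta\|$ is, by Definition \ref{normthet}, exactly the quantity $\sup\{\|\theta(f)\| : 0 \le f \le 1\}$, so the entire content of the statement is the assertion that this supremum is finite. (It is at least defined, since $f=0$ gives $\theta(0)=\theta(0\cdot 0)=0$ by positive homogeneity, so $0$ lies in the set.)

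So I would assume the supremum is infinite and pick, for each $n \in \mathbb{N}$, a function $f_n \in C_0(X)$ with $0 \le f_n \le 1$ and $\|\theta(f_n)\| \ge 4^n$. The key device is to form the single function $g = \sum_{n=1}^{\infty} 2^{-n} f_n$: the series converges uniformly by the Weierstrass $M$-test (with $M_n = 2^{-n}$), so $g$ is again an element of $C_0(X)$, being a uniform limit of elements of $C_0(X)$; moreover $0 \le g \le \sum_{n=1}^\infty 2^{-n} = 1$.

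The point of the construction is that for every fixed $n$ all summands are nonnegative, whence $g \ge 2^{-n} f_n \ge 0$. Positive homogeneity gives $\theta(2^{-n} f_n) = 2^{-n}\theta(f_n)$, and the norm-monotonicity hypothesis then yields
\[
\|\theta(g)\| \ \ge\ \|\theta(2^{-n} f_n)\| \ =\ 2^{-n}\,\|\theta(f_n)\| \ \ge\ 2^{-n} 4^n \ =\ 2^n
\]
for all $n$, which is absurd since $\theta(g)$ is a genuine element of the normed space $E$ and hence has finite norm. This contradiction establishes $\|\theta\| = \sup\{\|\theta(f)\| : 0 \le f \le 1\} < \infty$.

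I do not anticipate a real obstacle: only the two stated hypotheses (positive homogeneity and monotonicity of the norm under $0 \le f \le g$) are used, and no quasi-linearity, additivity, or orthogonal additivity of $\theta$ enters. The one point requiring a line of care is the verification that $g \in C_0(X)$ with $0 \le g \le 1$, together with the elementary observation that $g$ dominates each rescaled bump $2^{-n} f_n$; everything else is immediate from the hypotheses.
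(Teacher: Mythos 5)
Your proof is correct, and it is essentially the argument the paper has in mind: the paper simply cites \cite[L. 2.3]{Alf:ReprTh}, whose proof is exactly this gliding-hump construction (choose $f_n$ with $\|\theta(f_n)\|\ge 4^n$, form $g=\sum 2^{-n}f_n\in C_0(X)$, and use positive homogeneity plus the norm-monotonicity hypothesis to get $\|\theta(g)\|\ge 2^n$ for all $n$, a contradiction). Your write-up correctly uses only the two stated hypotheses, so nothing further is needed.
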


\begin{proof}
The statement can be obtained by an easy adaptation of an argument in \cite[L. 2.3]{Alf:ReprTh}.
\end{proof}

\begin{corollary} \label{qlfbd}
Suppose $X, Y$ are LC. A conic quasi-linear map $ \theta: C_0(X) \longrightarrow C_b(Y)$ is bounded;
for a compact $X$ a quasi-linear map $ \theta: C(X) \longrightarrow C_b(Y)$ is monotone and  bounded. 
A  quasi-homomorphism $ \theta: C_0(X) \longrightarrow C_b(Y)$ is bounded and monotone.
\end{corollary}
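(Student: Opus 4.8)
The plan is to obtain all three statements as direct consequences of Theorem~\ref{normbddOP} and Lemma~\ref{ThetElPro}, with essentially no fresh computation. In every case I would verify two things: that $\theta$ is positive-homogeneous, i.e. $\theta(af) = a\theta(f)$ for $a \ge 0$, and that $0 \le f \le g$ implies $\|\theta(f)\| \le \|\theta(g)\|$; once these hold, Theorem~\ref{normbddOP} yields $\|\theta\| = \sup\{\|\theta(f)\| : 0 \le f \le 1\} < \infty$, which is boundedness in the sense of Definition~\ref{normthet}. The only structural feature of the codomain I would exploit is that $C_b(Y)$ carries the supremum norm, so a pointwise order relation $0 \le \theta(f) \le \theta(g)$ between nonnegative continuous functions immediately upgrades to $\|\theta(f)\| \le \|\theta(g)\|$.

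For a conic quasi-linear map $\theta : C_0(X) \to C_b(Y)$ I would argue as follows. Both $f$ and $af$ (for $a \ge 0$) lie in the cone $A^+(f)$ (take $\phi(t) = at$, which is nondecreasing with $\phi(0) = 0$; the case $a=1$ gives $f \in A^+(f)$), so condition (iii) in the definition of a conic quasi-linear map, applied with $g = h = f$ and second coefficient $0$, gives $\theta(af) = a\theta(f)$. By Lemma~\ref{ThetElPro}(a) we have $\theta(0) = 0$, so condition (i) with $g = 0$ gives $\theta(f) \ge 0$ for $f \ge 0$; hence $0 \le f \le g$ forces $0 \le \theta(f) \le \theta(g)$ pointwise on $Y$, and therefore $\|\theta(f)\| \le \|\theta(g)\|$. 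Theorem~\ref{normbddOP} with $E = C_b(Y)$ then gives boundedness.

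For the compact case, $X$ compact gives $C(X) = C_0(X) = C_c(X)$. A quasi-linear map is linear on each $B(f)$, so $\theta(af) = a\theta(f)$, and it is positive by definition; by Lemma~\ref{ThetElPro}(e) it is monotone on $C_c(X) = C(X)$, which is the asserted monotonicity. Then $0 \le f \le g$ gives $0 = \theta(0) \le \theta(f) \le \theta(g)$ pointwise, hence $\|\theta(f)\| \le \|\theta(g)\|$, and Theorem~\ref{normbddOP} yields boundedness. Finally, for a quasi-homomorphism $\theta : C_0(X) \to C_b(Y)$, Lemma~\ref{ThetElPro}(b) shows that for each $y \in Y$ the functional $f \mapsto \theta(f)(y)$ is a simple quasi-linear functional and that $\|\theta\| \le 1$; in particular $\theta$ is bounded, and being a bounded quasi-linear map it is monotone by Lemma~\ref{ThetElPro}(f).

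There is no genuine obstacle: the corollary is a bookkeeping exercise assembling Theorem~\ref{normbddOP} with the relevant parts of Lemma~\ref{ThetElPro}. The one point I would state with care is the verification of the hypotheses of Theorem~\ref{normbddOP} in each of the three settings --- positive-homogeneity and norm-monotonicity on the nonnegative cone --- and, inside that, the observation that the pointwise order on $C_b(Y)$ transfers to the supremum norm, which is exactly what lets the monotonicity assertions of Lemma~\ref{ThetElPro} feed into Theorem~\ref{normbddOP}.
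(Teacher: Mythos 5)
Your proposal is correct and follows essentially the same route as the paper: verify the hypotheses of Theorem~\ref{normbddOP} (positive homogeneity from linearity on $B(f)$, resp.\ condition (iii) on $A^+(f)$, and norm-monotonicity from pointwise monotonicity, which for compact $X$ comes from monotonicity of quasi-linear functionals as in Lemma~\ref{ThetElPro}\ref{qlmMON}), and handle the quasi-homomorphism case via Lemma~\ref{ThetElPro}\ref{1inducedQLF} and \ref{qlmMONb}. The paper's proof is just a terser version of this same argument, so no further comment is needed.
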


\begin{proof}
When $X$ is compact, any quasi-linear functional is monotone (see  \cite[L. 4.2]{Butler:QLFLC} or \cite[L. 4.1]{Aarnes:TheFirstPaper}),  
and for a quasi-linear map  $ \theta: C_0(X) \longrightarrow C_b(Y)$  considering  quasi-linear functionals $ \rho(f) = \theta(f)(y)$ we have:
$f \le g \Longrightarrow \theta(f)(y) \le \theta (g) (y)$, so $ \| \theta(f) \| \le \| \theta(g) \|$.
Apply Theorem \ref{normbddOP} and  Lemma \ref{ThetElPro}.
\end{proof}

$f_{\alpha}  \searrow f$ means that a decreasing net of functions converges pointwise to $f$;  $f_{\alpha}  \nearrow f$ is defined similarly. 

\begin{theorem} \label{INTsmoo}
Let $ \mu$ be a finite deficient topological measure on a LC space $X$. 
\begin{enumerate}[leftmargin=0.25in, label=(\Roman*),ref=(\Roman*)]
\item
Suppose $f_{\alpha}  \nearrow f,  \ f_{\alpha}, f \in C_0(X)$. Then $ \int_X f_{\alpha} \, d\mu \nearrow  \int_X f \, d\mu$. 
\item 
For a finite topological measure also  $f_{\alpha}  \searrow f  \ f_{\alpha}, f \in C_0(X)$ implies $ \int_X f_{\alpha} \, d\mu \searrow  \int_X f \, d\mu$.
\end{enumerate} 
 \end{theorem}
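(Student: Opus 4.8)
The plan is to reduce both parts to a monotone‑convergence statement for ordinary integrals of \emph{monotone} real functions on a fixed compact interval, exploiting the layer–cake representation of the quasi‑integral in Remark~\ref{RemBRT}\ref{prt1}. For Part (I): by monotonicity of $\mathcal R_\mu$ (Remark~\ref{RemBRT}\ref{RHOsvva}) the net $\int_X f_\alpha\,d\mu$ is nondecreasing and bounded above by $\int_X f\,d\mu$, so it converges to $L:=\sup_\alpha\int_X f_\alpha\,d\mu\le\int_X f\,d\mu$, and I only need $L=\int_X f\,d\mu$. Replacing the index set by a cofinal tail $\{\alpha\ge\alpha_0\}$ (which affects neither $L$ nor the relation $f_\alpha\nearrow f$) I may assume $f_{\alpha_0}\le f_\alpha\le f$ for all $\alpha$, so every $f_\alpha$ and $f$ takes values in the fixed compact interval $[a,b]$, $a=\inf_X f_{\alpha_0}$, $b=\|f\|$. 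By (\ref{rfform}), $\int_X f_\alpha\,d\mu=\int_a^b R_{1,\mu,f_\alpha}(t)\,dt+a\mu(X)$ and similarly for $f$, so it suffices to show $\int_a^b R_{1,\mu,f_\alpha}(t)\,dt\to\int_a^b R_{1,\mu,f}(t)\,dt$. Fix $t$: since $f_\alpha(x)\to f(x)$ and $f_\alpha\le f$, the open sets $\{f_\alpha>t\}$ form an increasing net with union $\{f>t\}$, so $\tau$-smoothness of $\mu$ on open sets (Remark~\ref{tausm}) gives $R_{1,\mu,f_\alpha}(t)=\mu(\{f_\alpha>t\})\uparrow\mu(\{f>t\})=R_{1,\mu,f}(t)$. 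Each $R_{1,\mu,f_\alpha}$ and $R_{1,\mu,f}$ is a nonincreasing function $[a,b]\to[0,\mu(X)]$, and for any such $h$ the right–endpoint sum over the equipartition $t_k=a+\tfrac{k(b-a)}{n}$ satisfies $0\le\int_a^b h(t)\,dt-\tfrac{b-a}{n}\sum_{k=1}^n h(t_k)\le\tfrac{(b-a)\mu(X)}{n}$, a bound \emph{uniform in $h$}. Hence, given $\varepsilon>0$, fixing $n$ with $\tfrac{(b-a)\mu(X)}{n}<\varepsilon/3$ and then using $\tfrac{b-a}{n}\sum_k R_{1,\mu,f_\alpha}(t_k)\to\tfrac{b-a}{n}\sum_k R_{1,\mu,f}(t_k)$ (a finite sum of scalar limits) gives $|\int_a^b R_{1,\mu,f_\alpha}-\int_a^b R_{1,\mu,f}|<\varepsilon$ eventually.

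For Part (II), $\mu$ is now a finite topological measure, so I would use $\int_X f_\alpha\,d\mu=\int_X f_\alpha^+\,d\mu-\int_X f_\alpha^-\,d\mu$ from (\ref{intsplit}). As $s\mapsto s^{\pm}$ is continuous and monotone and $f_\alpha\searrow f$, we get $f_\alpha^-\nearrow f^-$ and $f_\alpha^+\searrow f^+$, all in $C_0(X)$; Part (I) already gives $\int_X f_\alpha^-\,d\mu\to\int_X f^-\,d\mu$. For the plus term, passing to a tail so that $f_\alpha^+,f^+\in[0,b]$, $b=\|f_{\alpha_0}^+\|$, use (\ref{rfformp}): $\int_X f_\alpha^+\,d\mu=\int_0^b R_{2,\mu,f_\alpha^+}(t)\,dt$, likewise for $f^+$. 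For $t>0$, $\{f_\alpha^+\ge t\}=\{f_\alpha\ge t\}$, which is \emph{compact} because $f_\alpha\in C_0(X)$ and $t>0$; these form a decreasing net of compact sets with intersection $\{f\ge t\}=\{f^+\ge t\}$ (using $f_\alpha(x)\to f(x)$ and $f_\alpha\ge f$), so $\tau$-smoothness on compact sets gives $R_{2,\mu,f_\alpha^+}(t)\downarrow R_{2,\mu,f^+}(t)$ for every $t>0$. The same uniform Riemann–sum estimate as in Part (I), now with partition points $t_k=\tfrac{kb}{n}\in(0,b]$ (so the point $t=0$, where convergence is not claimed, is avoided), yields $\int_0^b R_{2,\mu,f_\alpha^+}\to\int_0^b R_{2,\mu,f^+}$; combining with the minus term gives $\int_X f_\alpha\,d\mu\to\int_X f\,d\mu$, and the convergence is from above since $\mathcal R_\mu$ is monotone.

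The step I expect to be the main obstacle is the interchange $\lim_\alpha\int_a^b=\int_a^b\lim_\alpha$: one cannot invoke the monotone or dominated convergence theorem verbatim, because the index set is an arbitrary directed set and monotone convergence of integrals genuinely fails for uncountable nets. What saves the argument is that the integrands $R_{1,\mu,f_\alpha}$ (resp.\ $R_{2,\mu,f_\alpha^+}$) are \emph{monotone} functions with the common bound $\mu(X)$, so that their Riemann–sum approximation error is controlled uniformly in $\alpha$ and the limit is reduced to finitely many scalar limits $R_{1,\mu,f_\alpha}(t_k)\to R_{1,\mu,f}(t_k)$ coming from $\tau$-smoothness. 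A secondary point requiring care is that the level sets be of exactly the type each version of $\tau$-smoothness in Remark~\ref{tausm} demands — open sets $\{f_\alpha>t\}$ for Part (I) and \emph{compact} sets $\{f_\alpha\ge t\}$ for Part (II), the compactness coming from $f_\alpha\in C_0(X)$ together with $t>0$, which is precisely why the $R_2$–representation on $[0,b]$ (rather than the $R_1$–representation) must be used for the plus term.
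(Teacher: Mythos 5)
Your argument is correct and follows essentially the same route as the paper: the layer--cake formulas (\ref{rfform}), (\ref{rfformp}), $\tau$-smoothness of $\mu$ on open sets for the level sets $\{f_\alpha>t\}$ in (I), the reduction of (II) to $f\ge 0$ via (\ref{intsplit}) together with part (I), and $\tau$-smoothness on compact sets for $\{f_\alpha\ge t\}$, $t>0$. The only difference is the final interchange of the net limit with $\int_a^b$: the paper cites \cite[L. 7.2.6]{Bogachev} (using lower semicontinuity of $R_1$) and \cite[L. 3.2]{GrubbLaBerge:Smooth}, while you replace these citations with a self-contained uniform Riemann-sum estimate exploiting the monotonicity in $t$ of $R_1$ and $R_2$, which is a perfectly valid (and correctly executed) way to handle the uncountable-net issue.
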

 
\begin{proof}
(I).
If $f_{\alpha}  \nearrow f,  \ f_{\alpha}, f \in C_0(X)$ then sets $f_{\alpha}^{-1}((t, \infty)) \nearrow f^{-1}((t, \infty))$  for each $t >0$.
$ \mu$ is $\sigma$-smooth on open sets, so $R_{1,\mu, f_{\alpha}} (t) \nearrow R_{1,\mu, f} (t).$  
By \cite[L. 6.3]{Butler:ReprDTM} each $R_1(t)$ is real-valued, right-continuous, nonincreasing, so it is lower semicontinuous. 
By \cite[L.7.2.6]{Bogachev} $\int_a^b R_{1,\mu, f_{\alpha}} (t) \, dt \nearrow \int_a^b R_{1,\mu, f} (t) \, dt $ (where 
$[a,b]$ contains $f(X)$), so by formula (\ref{rfform}) $ \int_X f_{\alpha} \, d\mu \nearrow  \int_X f \, d\mu$.  \\
(II).
Suppose $f_{\alpha}  \searrow f$, where  $f_{\alpha}, f \in C_0(X)$.  
By formula (\ref{intsplit}) and the first part we may assume that  $f\ge 0$. 
Since $\mu$ is $\sigma$-smooth on compact sets, we see that 
for $R_{2, \mu, f_{\alpha}} (t) \searrow  R_{2, \mu, f} (t)$ for every $t >0$. 
By \cite[L. 3.2]{GrubbLaBerge:Smooth} and formula (\ref{rfformp})  $ \int_X f_{\alpha} \, d\mu \rightarrow  \int_X f \, d\mu$.    
\end{proof}

\begin{Theorem} \label{wEq}
Let $X, Y$ be LC.
\begin{enumerate}[leftmargin=0.25in, label=(\roman*),ref=(\roman*)]
\item \label{wEqi}
Suppose $ \theta: C_0(X) \longrightarrow C(Y)$ is a quasi-linear map
such that $ \sup  \{ |\theta(f)(y)| :  \| f \| \le 1 \} = N_y < \infty$ for each $y$. 
Then there exists a continuous map $w: Y \longrightarrow \mathbf{TM}(X)$) such that
$\theta(f) (y) = \int_X f \, dw_y\, $  for every $f \in C_0(X)$, and $w_y(X) \le N_y$. Here $w_y = w(y)$.
If $f_{\alpha}  \nearrow f,  \ f_{\alpha}, f \in C_0(X)$ then $ \theta(f_{\alpha}) \nearrow \theta(f) $;  
if $f_{\alpha}  \searrow f,  \ f_{\alpha}, f \in C_0(X)$ then $ \theta(f_{\alpha}) \searrow \theta(f) $.

\item \label{wEqii}
If $ \theta: C_0(X) \longrightarrow C(Y)$ is a conic quasi-linear (respectively, a bounded quasi-linear) map,
then there exists a continuous map  $w: Y \longrightarrow  \mathcal M $ such that 
$\theta(f) (y) = \int_X f \, dw_y\, $  for every $f \in C_0^+(X)$ (resp., $f \in C_0(X)$).
Here $ \mathcal M = \{ \mu: \mu(X) \le \| \theta\| \} $  is a uniformly bounded in variation family of deficient topological (resp., topological) measures on $X$.
For a conic quasi-linear map, if $f_{\alpha}  \nearrow f,  \ f_{\alpha}, f \in C_0^+(X)$,
then $ \theta(f_{\alpha}) \nearrow \theta(f) $. 

\item  \label{wEqiii}
If $ \theta$ is a conic quasi-homomorphism (resp.,  quasi-homomorphism) 
then  $w: Y \longrightarrow X^\flat$ (resp.,  $w: Y \longrightarrow X^{\sharp}$), 
and if $ \theta: C_0(X) \longrightarrow C_0(Y)$, then $w$ is a continuous k-proper map. 
\end{enumerate}
\begin{enumerate}[leftmargin=0.25in, label=(\Roman*),ref=(\Roman*)]

\item \label{wEqI}
To each continuous map $w: Y \longrightarrow \mathbf{TM}(X)$ (resp.,  $w: Y  \longrightarrow  \mathbf{DTM}(X)$)  
corresponds an order-preserving quasi-linear (resp., conic quasi-linear) map $ \theta: C_0(X) \longrightarrow C(Y)$ such that
$\theta(f) (y) = \int_X f \, dw_y$, and $ \sup  \{ |\theta(f)(y)| :  \| f \| \le 1 \} \le w_y$ for each $y$. 

\item \label{wEqII}
If  $w: Y \longrightarrow  \mathcal M$, where $w$ is a continuous proper funciton and   $ \mathcal M = \{ \mu : \mu(X) \le N\} $ 
is a uniformly bounded in variation family of (deficient) topological measures, then  $ \| \theta \| \le N$, $ \theta(f) \in C_b(Y)$,  and  
$ \theta (C_0^+(X)) \subseteq C_0^+(Y)$. Hence, $ \theta (C_0(X)) \subseteq C_0(Y)$ for a quasi-linear map $ \theta$. 
If $ f, g \in C_c^+(X)$ then $ \| \theta(f) - \theta(g) \| \le N \| f -g\|$; and if $X$ is compact, then 
$ \| \theta(f) - \theta(g) \| \le N \| f -g\|$ for any $f,g \in C(X)$.
 A quasi-linear $ \theta$ is a Lipschitz map: for  $f, g \in C_0(X)$,
$\| \theta(f) - \theta(g) \| \le 2 N \| f -g\|$. 

\item \label{wEqIII}
If $w: Y \longrightarrow X^\sharp$ (resp.,  $w: Y \longrightarrow X^{\flat}$)  is a continuous k-proper funciton, then 
$\theta$ is a quasi-homomorphism (resp., a conic quasi-homomorphism), $ \| \theta \| \le 1$, $ \theta(C_0^+(X)) \subseteq C_0^+(Y)$, 
and $\theta(C_c^+(X)) \subseteq C_c^+(Y)$. 
Hence, $ \theta (C_0(X)) \subseteq C_0(Y)$ and  $ \theta (C_c(X)) \subseteq C_c(Y)$ for a quasi-linear map $ \theta$. \\
\end{enumerate}
In particular, there is a 1-1 correspondence between quasi-linear maps $ \theta: C_0(X) \longrightarrow C(Y)$ from part  \ref{wEqi} and  
continuous functions $w: Y \longrightarrow \mathbf{TM}(X)$.
There is a 1-1 correspondence between bounded quasi-linear maps $ \theta: C_0(X) \longrightarrow C_b(Y)$ 
(resp., conic quasi-linear maps $ \theta: C_0^+(X) \longrightarrow C_b^+(Y)$) and continuous
functions $w: Y \longrightarrow  \mathcal M$, where $\mathcal M$ is a uniformly bounded in variation family of topological (resp., deficient topological)  measures.
There is a 1-1 correspondence between quasi-homomorphisms $ \theta: C_0(X) \longrightarrow C_0(Y)$   
(resp.,  conic quasi-homomorphisms  $ \theta: C_0^+(X) \longrightarrow C_0^+(Y)$)  and 
continuous k-proper functions $w: Y \longrightarrow X^\sharp $ (resp.,   $w: Y \longrightarrow  X^{\flat}$).
In these correspondences $ \theta$ has properties stated in \ref{wEqi},  \ref{wEqii}, \ref{wEqII} and \ref{wEqIII}.
\end{Theorem}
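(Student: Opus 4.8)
The plan is to prove the theorem fiberwise, using the dictionary of Remark~\ref{RemBRT} between bounded (p-conic) quasi-linear functionals and finite (deficient) topological measures. For a map $\theta$ and $y\in Y$ put $\rho_y(f)=\theta(f)(y)$; for a map $w$ into $\mathbf{DTM}(X)$ put $w_y=w(y)$ and $\rho_y^{w}(f)=\int_X f\,dw_y$. The ``forward'' assertions \ref{wEqi}--\ref{wEqiii} are obtained by feeding each $\rho_y$ into Remark~\ref{RemBRT} to produce $w_y$, and the ``reverse'' assertions \ref{wEqI}--\ref{wEqIII} by reading $\rho_y^{w}$ back through Remark~\ref{RemBRT}; the two passages are mutually inverse by the round-trip identities of Remark~\ref{RemBRT} together with (\ref{EqByInt}), which gives the stated $1$--$1$ correspondences. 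Two observations make the continuity bookkeeping automatic: by Definition~\ref{defwk}, $w$ is continuous into the weak topology iff $y\mapsto\mathcal R_{w_y}(f)=\theta(f)(y)$ is continuous for each $f$; and $\theta(f)=\tilde f\circ w$ is continuous by Remark~\ref{fhatLin}.

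For part \ref{wEqi}, Lemma~\ref{ThetElPro}\ref{1inducedQLF} makes each $\rho_y$ a bounded quasi-linear functional with $\|\rho_y\|=N_y$; Remark~\ref{RemBRT} then yields a finite topological measure $w_y$ with $w_y(X)\le N_y$ and $\theta(f)(y)=\int_X f\,dw_y$ for all $f\in C_0(X)$, continuity of $w$ follows as above, and the monotone-convergence clauses are Theorem~\ref{INTsmoo} applied pointwise in $y$. For part \ref{wEqii}, a conic quasi-linear map into $C_b(Y)$ and a bounded quasi-linear map are bounded (Corollary~\ref{qlfbd}; for a single functional, $\|\rho_y\|\le\|\theta\|<\infty$ by Theorem~\ref{normbddOP} with $E=\mathbb R$, p-conic quasi-linear functionals being positive-homogeneous and monotone), so each $w_y$ lies in the uniformly bounded family $\mathcal M=\{\mu:\mu(X)\le\|\theta\|\}$, which is compact by Remark~\ref{McompT2}, and the $\nearrow$ clause is Theorem~\ref{INTsmoo}(I). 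For part \ref{wEqiii}, when $\theta$ is a (conic) quasi-homomorphism, $\rho_y$ is multiplicative on each $B(f)$ (resp.\ $A^+(f)$), hence simple by Theorem~\ref{rhosimple}, so $w_y\in X^\sharp$ (resp.\ $X^{\flat}$); for $k$-properness when $\theta$ maps into $C_0(Y)$, given compact $K$ pick $g\in C_c^+(X)$ with $1_K\le g\le1$, so that Remark~\ref{RemBRT}\ref{mrDTM} forces $\int_X g\,dw_y\ge1$ whenever $w_y(K)=1$, whence $w^{-1}(K^\sharp)\subseteq\{y:\theta(g)(y)\ge1\}$; the latter is compact since $\theta(g)\in C_0(Y)$, and $w^{-1}(K^\sharp)$ is closed ($w$ continuous, $K^\sharp$ closed by Lemma~\ref{XdiezCom}), hence compact.

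For the reverse passages, given continuous $w$ define $\theta(f)(y)=\int_X f\,dw_y$; then $\theta(f)=\tilde f\circ w\in C(Y)$, and fiberwise $\rho_y^{w}$ is a quasi-linear functional when $w_y$ is a topological measure, a p-conic one when $w_y\in\mathbf{DTM}(X)$, and simple when $w_y\in X^\sharp$ or $X^{\flat}$ (Remark~\ref{RemBRT}, Theorem~\ref{rhosimple}); so $\theta$ has the stated algebraic type, is order-preserving, and $\sup\{|\theta(f)(y)|:\|f\|\le1\}\le w_y(X)$ by (\ref{estin}), proving part \ref{wEqI}. When $w$ lands in $\mathcal M=\{\mu:\mu(X)\le N\}$, the bound $\|\theta\|\le N$ comes from (\ref{estin}) and Lemma~\ref{ThetElPro}\ref{thnorm2de}, and the Lipschitz/continuity estimates from (\ref{intsplit}) and its $C_c^+$ and compact-space refinements in Remark~\ref{RemBRT}\ref{RHOsvva}. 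If moreover $w$ is proper, then $\theta(C_0^+(X))\subseteq C_0^+(Y)$: given $f\in C_0^+(X)$ and $\epsilon>0$, set $\delta=\epsilon/(2N)$ and $g=(f-\delta)^+\in C_c^+(X)$, choose a bounded open $U\supseteq supp\, g$; by the estimate behind (\ref{muUprosche}), $\int_X g\,dw_y\le\|g\|\,w_y(\overline U)$, so $\int_X f\,dw_y\le\|f\|\,w_y(\overline U)+\delta N$ (adding the constant $\delta$ inside the integral is legitimate on $C_b(X)$ by (\ref{rfform})), whence $\{y:\theta(f)(y)\ge\epsilon\}\subseteq w^{-1}\big(\{\mu\in\mathcal M:\mu(\overline U)\ge\epsilon/(2\|f\|)\}\big)$; the set in braces is a closed subset of the compact $\mathcal M$ (the condition $\mu(C)\ge s$ is weakly closed for compact $C$, each point of its complement having a basic neighborhood of the form (\ref{WkNbd}) inside the complement), so its $w$-preimage is compact, and $\theta(f)$ vanishes at infinity; this is part \ref{wEqII}. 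For part \ref{wEqIII}, $w$ into $X^\sharp$ (resp.\ $X^{\flat}$) $k$-proper makes each $w_y$ simple, so $\theta$ is a quasi-homomorphism (resp.\ conic quasi-homomorphism) with $\|\theta\|\le1$ by Lemma~\ref{ThetElPro}\ref{1inducedQLF}; and for $f\in C_c^+(X)$ with $K=supp\, f$, the bound $\int_X f\,dw_y\le\|f\|\,w_y(K)$ (again via (\ref{muUprosche}) and outer regularity) with simplicity of $w_y$ gives $\theta(f)(y)\neq0\Rightarrow y\in w^{-1}(K^\sharp)$, a compact set, so $\theta(C_c^+(X))\subseteq C_c^+(Y)$. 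The $C_0$- and $C_c$-versions for signed $f$ follow from $\theta(f)=\theta(f^+)-\theta(f^-)$ (Lemma~\ref{ThetElPro}\ref{qlmADD}).

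Finally, for the $1$--$1$ correspondences: starting from $w$, the deficient topological measure induced by $\rho_y^{w}$ via Remark~\ref{RemBRT}\ref{mrDTM} is $w_y$ itself by the round-trip identity of Remark~\ref{RemBRT}; starting from $\theta$, re-integration against $w_y$ recovers $\rho_y=\theta(\cdot)(y)$ on $C_0(X)$ (resp.\ $C_0^+(X)$), since a bounded (p-conic) quasi-linear functional has the integral form of Remark~\ref{RemBRT}\ref{prt1}, and a quasi-linear map is determined by its restriction to $C_0^+(X)$ by Lemma~\ref{ThetElPro}\ref{qlmADD}; this yields the three bijections, and $\theta$ carries the properties recorded in \ref{wEqi}, \ref{wEqii}, \ref{wEqII}, \ref{wEqIII}. \emph{The main obstacle} is the one genuinely new estimate beyond the fiberwise bookkeeping: converting ``$\theta(f)$ is small outside a compact subset of $Y$'' into ``$w_y$ is concentrated near $supp\, f$'' and then into compactness in the weak topology using properness of $w$ --- that is, the image-containment statements $\theta(C_0^+(X))\subseteq C_0^+(Y)$, $\theta(C_c^+(X))\subseteq C_c^+(Y)$ and their $k$-proper analogues in \ref{wEqII} and \ref{wEqIII}.
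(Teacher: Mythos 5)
Your proposal is correct and follows essentially the same fiberwise strategy as the paper: you pass between $\theta$ and $w$ through the functionals $\rho_y(f)=\theta(f)(y)$ and the dictionary of Remark \ref{RemBRT}, get continuity of $w$ from Definition \ref{defwk} and of $\theta(f)=\tilde f\circ w$ from Remark \ref{fhatLin}, simplicity of $w_y$ from Theorem \ref{rhosimple}, and the monotone limits from Theorem \ref{INTsmoo} — exactly the paper's skeleton. The one place where you genuinely deviate is the vanishing-at-infinity estimate in part \ref{wEqII}: the paper argues through the distribution function $R_{2,w_y,f}$ and the quantity $t_y=\sup\{t:\,R_{2,w_y,f}(t)\ge c\}$ to force $w_y\bigl(f^{-1}([c/N,\infty))\bigr)\ge c$, whereas you cut $f$ at level $\delta=\epsilon/(2N)$ and use monotonicity together with the translation identity $\int(g+\delta)\,d\mu=\int g\,d\mu+\delta\,\mu(X)$, which does follow from (\ref{rfform}); both routes trap $\{y:\theta(f)(y)\ge\epsilon\}$ in a set of the form $w^{-1}(\{\mu:\mu(C)\ge s\})$ with $C$ compact, so your version is a legitimate and slightly more elementary substitute.

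One point needs repair: part \ref{wEqIII} also asserts $\theta(C_0^+(X))\subseteq C_0^+(Y)$, and your write-up only establishes $\theta(C_c^+(X))\subseteq C_c^+(Y)$. This cannot be delegated to part \ref{wEqII}, because a k-proper $w:Y\longrightarrow X^\sharp$ need not be proper (the target $X^\sharp$ is compact, so properness would force $Y$ compact). The paper handles it using the fact that $R_{2,w_y,f}$ is a $\{0,1\}$-valued step function; alternatively, your own cutoff argument closes the gap at once: for simple $w_y$, $\theta(f)(y)\ge\epsilon$ gives $w_y(\overline U)>0$, hence $w_y(\overline U)=1$ and $y\in w^{-1}\bigl((\overline U)^\sharp\bigr)$ (resp.\ $(\overline U)^{\flat}$), compact by k-properness. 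A minor citation slip: the $C_c^+$ Lipschitz estimate with constant $N$ is not literally among the facts reproduced in Remark \ref{RemBRT} (the paper cites an external lemma for it), though it too follows from the same monotonicity-plus-translation identity you invoke.
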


\begin{proof}
Let $y \in Y$. Consider the map  $\rho : C_0(X) \longrightarrow \mathbb{R}$ given by $\rho(f) = \theta(f)(y)$.  \\
\ref{wEqi}. By assumption  $\rho$ is bounded, so by Remark \ref{RemBRT}
$  \theta(f)(y) = \int_X f \, dw_y$ 
for some finite topological measure $w_y$ on $X$. 
Define    $w: Y \longrightarrow \mathbf{TM}(X)  $ by $w(y) = w_y$. 
Suppose a net $y_{\alpha} \longrightarrow y$. 
For each $f \in C_0^+(X)$ 
by continuity of $\theta(f)$, $\theta(f) (y_{\alpha}) \longrightarrow \theta(f) (y)$, i.e. 
$\int_X f \, dw_{y_{\alpha}} \longrightarrow \int_X f\, dw_y$, so $ w(y_{\alpha}) \Longrightarrow w(y)$, and $w$ is continuous. 
By Remark \ref{RemBRT}\ref{mrDTM}, $w_y(X) = \sup \{ \rho(f): f \in C_c(X), 0 \le f \le 1 \} \le N_y$. 
For the last two statements apply Theorem \ref{INTsmoo} to $\rho(f)$, and note that an increasing/decreasing net $(f_{\alpha})$ will give 
an increasing/decreasing net $(\theta(f_{\alpha}))$ by monotonicity of $ \rho$ in Remark \ref{RemBRT}\ref{RHOsvva}. \\
\ref{wEqii}. Follows from part \ref{wEqi}; note that by Corollary \ref{qlfbd} a conic quasi-linear map is bounded. \\
\ref{wEqiii}.  Since $\theta$ is multipilicative on the cone (resp., on the algebra) generated by $f$,  we have:
$\rho(f^2) = \theta (f^2)(y) = (\theta(f)(y))^2 = (\rho(f))^2$. By Theorem \ref{rhosimple}
$\rho$ is a simple  p-conic quasi-linear (resp., simple quasi-linear) functional.  
Thus, in $  \theta(f)(y) = \int_X f \, dw_y$,  $w_y$ is a simple deficient topological measure (resp., a simple topological measure).

Suppose $K \in \mathscr{K}(X)$. If $ y \in w^{-1}(K^\sharp)$ (or $ y \in w^{-1}(K^{\flat})$) then $w(y) (K) = 1$, so
for a function $g \in C_c^+(X)$ such that $g=1$ on $K$, by Remark \ref{RemBRT}\ref{mrDTM}  we have
$ \theta(g) (y) = \int_X  g\, dw_y \ge w_y( K) = 1$. Then $ y \in (\theta(g))^{-1}( [1, \infty))$, a compact set. Thus, $w$ is k-proper.  \\
\ref{wEqI}.  Suppose $w: Y \longrightarrow \mathbf{TM}(X)$ (respectively,  $w: Y  \longrightarrow   \mathbf{DTM}(X)$)  is continuous. 
For  $f \in C_0(X)$ define a function  $\theta(f)$ on $Y$ by $\theta(f) (y) = \int_X f \, dw_y.$
Then $\theta(f)$ is continuous, since $y_{\alpha} \longrightarrow y$ implies $w_{y_{\alpha}} \Longrightarrow   w_y$, so 
$\int_X f \, dw_{y_{\alpha}} \longrightarrow \int_X f\, dw_y$, i.e. $\theta(f) (y_{\alpha}) \longrightarrow \theta(f)(y)$. 
If $ f \le g$ then for each $y$ by Remark \ref{RemBRT}\ref{RHOsvva}  $\, \theta(f)(y) \le \theta(g)(y)$, and $ \theta$ is order-preserving.
Suppose $g, h \in B(f)$ (resp., $g, h \in A^+(f))$, $ f \in C_0(X)$. 
Since integration with respect to a finite topological (resp., finite deficient topological) measure is a quasi-linear functional 
(resp., a p-conic quasi-linear functional),  we see that for each $y$,
$\theta(g+ h)(y) = \theta(g)(y) + \theta(h) (y).$
Thus, $\theta(g + h) = \theta(g) + \theta(h)$. 
Then $\theta$ preserves linear combinations on $B(f)$ (resp., nonnegative linear combinations on $A^+(f)$) for $ f \in C_0(X)$.
If $w_y$ is a deficient topological measure, then property (ii) in Definition \ref{qhDef} holds for $\theta$ because 
it holds for each $ \theta(f)(y)$  by \cite[L. 7.7]{Butler:ReprDTM}. Thus, $ \theta$ is a (conic) quasi-linear map.
By Remark \ref{RemBRT}\ref{RHOsvva}, $ \sup  \{ |\theta(f)(y)| :  \| f \| \le 1 \} \le w_y$. \\
\ref{wEqII}. 
Suppose  $w: Y \longrightarrow  \mathcal M $ is proper and  $ \mathcal M = \{ \mu : \mu(X) \le N\} $. 
For any $ 0 \le f \le 1$ by formula (\ref{estin}),  $0 \le \theta(f)(y) \le w_y(X) \le N$ for each $y$. Thus, $ \| \theta \| \le N$.  
Let $ f \in C_0^{+}(X)$, $ 0 \le f \le 1$. $\theta(f) \ge 0$, and  to show that $ \theta(f) \in C_0^+(Y)$ we consider
$ \theta(f) (y) = \int_0^1 R_{2, w_y, f} (t) dt \ge 2c$ where $0 < 2c \le w_y(X)$ (see Remark \ref{RemBRT}).
The function $ R_{2, w_y, f} (t)$ is nonincreasing, and we let $ t_y = \sup\{t:    R_{2, w_y, f} (t) \ge c \}$. Note that 
$ t_y >0$ (otherwise $\theta(f)(y) \le c$), and $t_y \le 1$ (since $ R_{2, w_y, f} (t) = 0 $ for $t >1$).
Then  $ 2c \le \int_0^{t_y}  R_{2, w_y, f} (t) dt  + \int_{t_y}^1 R_{2, w_y, f} (t) dt  \le w_y(X) t_y + c(1 - t_y) < w_y(X) t_y + c$.
Then $w_y(X) t_y > c$, so $ t_y > \frac{c}{w_y(X)} \ge \frac cN$,
and $ R_{2, w_y, f} (\frac cN)  \ge c$, i.e. $ w_y(K) \ge c$ for $K  = f^{-1}([\frac cN, \infty))$.  
Thus, $w(y) \in D = \{ \mu \in  \mathcal M: \mu(K) \ge c \}$. The set $D$ is closed (see \cite[T. 2.1]{Butler:WkConv}), hence, compact. 
Then $ y \in w^{-1}(D)$, a compact set because $w$ is proper. 
We see that $ \theta (C_0^+(X)) \subseteq C_0^+(Y)$. 
For a quasi-linear map, $ \theta(f) = \theta(f^+) - \theta(f^-)$, so $ \theta (C_0(X)) \subseteq C_0(Y)$. 
For any $y$ and $f, g \in C_c^+(X)$  by \cite[L. 7.12(z7)]{Butler:ReprDTM} we have $|\theta(f)(y) - \theta(g)(y)| \le N \| f - g \|$, so
$\| \theta(f) - \theta(g) \| \le N \| f - g \|$. The remaining assertions follows similarly from  Remark \ref{RemBRT}\ref{RHOsvva}. \\
\ref{wEqIII}.   
We shall show that $\theta(f) \in C_0^+(Y)$ for $ f \in C_0(X), 0 \le f \le 1$.
Suppose $\theta(f)(y)  = \int_0^1 R_{2, w_y, f}(t) \, dt \ge 2c$, where $0 < 2c \le 1$. 
The function $R_{2, w_y, f}(t) $ is a step function that assumes only values $1$ and $0$ and has single discontinuity (at a point $b_y \ge 2c$).
Then $R_{2, w_y, f}(c) = 1$, i.e. $w_y(f^{-1}([c, \infty)))= 1$, i.e. $y \in w^{-1}((f^{-1}([c, \infty)))^\flat)$, 
a compact set, since $w$ is k-proper. Thus, $\theta(f) \in C_0^+(Y)$.

Let $f \in  C_c^+(X)$. 
If $y \notin  w^{-1}((supp f)^\flat) $ then $w_y \notin (supp f)^\flat$, i.e. $w_y (supp f) = 0$. 
Then $w_y(Coz f) = 0$, which by \cite[Th. 49]{Butler:Integration} 
means that $ \int f \, dw_y =0$, i.e. $\theta(f)(y) = 0$. Thus, $supp \,  \theta(f) \subseteq w^{-1}((supp f)^\flat)$, so $\theta(f)$ is compactly supported, since 
$w$ is k-proper. 

Suppose $g, h \in B(f)$ (resp.,  $g, h \in A^+(f)$), and  $y \in Y$.
Since $w_y$ is simple, by Theorem \ref{rhosimple}
$ \theta(gh)(y)  =( \theta(g) (y)) ( \theta(h)(y)), $
 so $ \theta(gh) = \theta(g) \theta(h).$ 
Thus, $ \theta$ is a (conic) quasi-homomorphism.
\end{proof}

\begin{remark} \label{ptBdwy}  
When $X$ and $Y$ are compact, 
the correspondence between bounded quasi-linear maps $\theta$ and continuous functions 
$w: Y \longrightarrow  \mathcal M$ was stated (without proof) in \cite[Prop. 9]{Grubb:Signed}, and the correspondence between quasi-homomorphisms and
functions $w$ first appeared in \cite{Aarnes:ITfirst}.
\end{remark} 
 
Although $\theta$ is not a linear operator, we still have one of the most important results for linear operators hold for $\theta$ as well.

\begin{theorem} \label{bddiscont}
Let $X, Y$ be LC spaces.
A continuous conic quasi-linear map $ \theta: C_0(X) \longrightarrow E$ is bounded.   
A quasi-linear map $ \theta: C_0(X) \longrightarrow C(Y)$ is continuous iff $ \theta$ is bounded. 
Any quasi-homomorphism $ \theta: C_0(X) \longrightarrow C(Y)$ is bounded, continuous, and monotone.
\end{theorem}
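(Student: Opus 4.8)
The plan is to dispatch the three assertions in turn, each by reducing to results already in hand. For the first one, the crucial point is that a conic quasi-linear map is positive-homogeneous: since $0=0\circ f\in A^+(f)$ and $f=\mathrm{id}\circ f\in A^+(f)$, condition (iii) in the definition of a conic quasi-linear map first gives $\theta(0)=0$ (take $a=b=0$) and then $\theta(af)=a\theta(f)$ for every $a\ge 0$ (take $g=f$, $h=0$, $b=0$). Now I would invoke continuity of $\theta$ at $0$: with $\varepsilon=1$ there is $\delta>0$ such that $\|f\|<\delta$ implies $\|\theta(f)\|<1$; since $\theta(\tfrac{\delta}{2}f)=\tfrac{\delta}{2}\theta(f)$, for any $f$ with $\|f\|\le 1$ we get $\tfrac{\delta}{2}\|\theta(f)\|=\|\theta(\tfrac{\delta}{2}f)\|<1$, hence $\|\theta\|\le 2/\delta<\infty$. (When $E$ is ordered with monotone norm one could instead quote Theorem \ref{normbddOP} and omit the continuity hypothesis, but the homogeneity argument covers an arbitrary normed $E$.)

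For the second assertion I would prove the two implications separately. Suppose $\theta$ is bounded and set $N=\|\theta\|$. Fix $y\in Y$; by Lemma \ref{ThetElPro}\ref{1inducedQLF} the functional $\rho_y(f)=\theta(f)(y)$ is a quasi-linear functional, and since $|\rho_y(f)|\le\|\theta(f)\|\le N\|f\|$ by Lemma \ref{ThetElPro}\ref{thnorm2de} it is bounded with $\|\rho_y\|\le N$; by Remark \ref{RemBRT} it is the quasi-integral against a finite topological measure $\mu_y$ with $\mu_y(X)\le\|\rho_y\|\le N$ (the bound on $\mu_y(X)$ comes from the inner-regularity formula in Remark \ref{RemBRT}\ref{mrDTM}). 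Then formula (\ref{intsplit}) yields $|\theta(f)(y)-\theta(g)(y)|\le 2\|f-g\|\,\mu_y(X)\le 2N\|f-g\|$; taking the supremum over $y$ gives the Lipschitz estimate $\|\theta(f)-\theta(g)\|\le 2N\|f-g\|$, so $\theta$ is continuous. Conversely, if $\theta$ is continuous, linearity of $\theta$ on each subalgebra $B(f)$ together with $af\in B(f)$ gives $\theta(af)=a\theta(f)$ for all real $a$, and then the same continuity-at-$0$ argument as in the first paragraph produces $\|\theta\|<\infty$.

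The third assertion then follows at once: if $\theta$ is a quasi-homomorphism, Lemma \ref{ThetElPro}\ref{1inducedQLF} gives $\|\theta\|\le 1$, so $\theta$ is bounded; by the second assertion $\theta$ is continuous, and by Lemma \ref{ThetElPro}\ref{qlmMONb} a bounded quasi-linear map is monotone.

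I expect the only genuinely delicate point to be bookkeeping around the extended uniform norm on $C(Y)$: one must check that the pointwise inequalities $|\theta(f)(y)|\le\|\theta(f)\|$ and the resulting Lipschitz bound are compatible with the extended-norm metric, and that the identification $\rho_y\leftrightarrow\mu_y$ with $\mu_y(X)\le\|\theta\|$ is legitimate. Everything else is a routine assembly of Lemma \ref{ThetElPro}, Remark \ref{RemBRT}, and Theorem \ref{normbddOP}.
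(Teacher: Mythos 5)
Your proposal is correct and follows essentially the same route as the paper: the homogeneity-plus-continuity-at-zero argument is exactly the ``standard argument'' the paper invokes for boundedness, your pointwise derivation of $\|\theta(f)-\theta(g)\|\le 2\|\theta\|\,\|f-g\|$ via Remark \ref{RemBRT} and formula (\ref{intsplit}) is just an unwinding of the citation to Theorem \ref{wEq}, and your treatment of quasi-homomorphisms via Lemma \ref{ThetElPro} reproduces the content of Corollary \ref{qlfbd}.
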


\begin{proof}
Let  $ \theta: C_0(X) \longrightarrow E$ be a (conic) quasi-linear map. 
By the standard argument, if $\theta$ is continuous, then $\theta$ is bounded. 
Now let $ \theta: C_0(X) \longrightarrow C(Y)$ be a bounded quasi-linear map.
Let $f, g \in C_0(X)$. 
By Theorem \ref{wEq}  $\| \theta(f) - \theta(g) \| \le 2 \| f - g \| \| \theta \|, $ so $ \theta $ is continuous.  
For the last statement use Corollary \ref{qlfbd}.  
\end{proof}

\noindent
As  the next theorem shows, it is enough to define bounded quasi-linear maps on $C_c(X)$ instead of $C_0(X)$. 

\begin{theorem}  \label{extfromCcOp}
Suppose $X,Y$ are LC,  $E$ is $\mathbb{R}$, $C_b(Y)$ or $C_0(Y)$.
A bounded quasi-linear map $ \theta': C_c(X) \longrightarrow E$ 
extends uniquely to a bounded and continuous quasi-linear map $ \theta: C_0(X) \longrightarrow E$ with the same norm.
A bounded quasi-linear map  $ \theta: C_0(X) \longrightarrow E$
is the unique extension of a bounded  quasi-linear map $ \theta': C_c(X) \longrightarrow E$ ($\theta'$ is the restriction of $\theta$  to $C_c(X)$).
\end{theorem}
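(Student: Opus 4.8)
The plan is to carry a bounded quasi-linear functional (or map) on $C_c(X)$ over to $C_0(X)$ through the finite (deficient) topological measure it induces, handling $E=C_b(Y),C_0(Y)$ pointwise and assembling via Theorem \ref{wEq}. Since $C_c(X)=C_0(X)$ when $X$ is compact, I assume $X$ noncompact. Two facts will be used repeatedly. First, for $f\in C_0(X)$ the set $B(f)\cap C_c(X)$ is uniformly dense in $B(f)$: approximate $\phi\in C(\overline{f(X)})$ with $\phi(0)=0$ uniformly by functions vanishing on a neighbourhood of $0$, whose compositions with $f$ are then supported in a set $\{|f|\ge\delta\}$, which is compact. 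Second, for a finite topological measure $\mu$ the functional $g\mapsto\int_X g\,d\mu$ is $2\mu(X)$-Lipschitz on $C_0(X)$ by (\ref{intsplit}), and consequently on a uniformly bounded in variation family of topological measures the weak topology is already generated by integration against $C_c^+(X)$ (use the same estimate together with the density of $C_c^+(X)$ in $C_0^+(X)$).

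\emph{Case $E=\mathbb{R}$.} Given a bounded quasi-linear functional $\theta'$ on $C_c(X)$, its restriction to $C_c^+(X)$ is a bounded d-functional (positive homogeneity from linearity on each $B(f)$ since $af\in B(f)$; monotonicity and orthogonal additivity as in Lemma \ref{ThetElPro} and \cite{Butler:QLFLC}), so by Remark \ref{RemBRT} (in particular part \ref{mrDTM}) and the representation of quasi-linear functionals (\cite[Th. 3.9]{Butler:QLFLC}) it induces a finite topological measure $\mu:=\mu_{\theta'}$ with $\theta'(f)=\int_X f\,d\mu$ for $f\in C_c^+(X)$; $\mu$ is a genuine topological measure because $\theta'$ is linear on singly generated subalgebras. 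Set $\theta(f):=\int_X f\,d\mu$ for $f\in C_0(X)$. Then $\theta$ is $2\mu(X)$-Lipschitz, hence continuous; it is linear on each $B(f)$ because it agrees with $\theta'$ on the uniformly dense subset $B(f)\cap C_c(X)$; it is positive; so $\theta$ is a bounded quasi-linear functional on $C_0(X)$. On $C_c(X)$ we get $\theta(f)=\theta(f^+)-\theta(f^-)=\theta'(f^+)-\theta'(f^-)=\theta'(f)$, since quasi-linear functionals split through $f^{\pm}$. Finally $\|\theta'\|\le\|\theta\|\le\mu(X)$ by (\ref{estin}), while $\mu(X)=\sup\{\theta'(g):g\in C_c(X),\,0\le g\le1\}\le\|\theta'\|$ by the formula for $\mu_{\theta'}$ on the open set $X$ in \ref{mrDTM}; hence $\|\theta\|=\|\theta'\|$.

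\emph{Cases $E=C_b(Y),C_0(Y)$.} For each $y\in Y$, $\rho_y(f):=\theta'(f)(y)$ is a bounded quasi-linear functional on $C_c(X)$ with $\|\rho_y\|\le N:=\|\theta'\|$, so by the previous case it comes from a finite topological measure $w_y$ with $w_y(X)\le N$; put $\theta(f)(y):=\int_X f\,dw_y$. For $g\in C_c(X)$ the map $y\mapsto\int_X g\,dw_y=\theta'(g)(y)$ is continuous, and $\{w_y\}$ is uniformly bounded in variation, so by the second fact above $w\colon Y\to\mathbf{TM}(X)$ is continuous; hence Theorem \ref{wEq}\ref{wEqI} shows $\theta$ is an order preserving quasi-linear map $C_0(X)\to C(Y)$ with $|\theta(f)(y)|\le\|f\|\,w_y(X)\le N\|f\|$. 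Thus $\theta(f)\in C_b(Y)$, $\|\theta\|\le N$, and pointwise (\ref{intsplit}) gives the Lipschitz bound $\|\theta(f)-\theta(g)\|\le 2N\|f-g\|$. On $C_c^+(X)$, $\theta(f)(y)=\int_X f\,dw_y=\rho_y(f)=\theta'(f)(y)$, so splitting through $f^{\pm}$ yields $\theta|_{C_c(X)}=\theta'$; in particular $\|\theta\|=N$. When $E=C_0(Y)$: for $f\in C_0(X)$ choose $f_\alpha\in C_c(X)$ with $f_\alpha\to f$ uniformly; by the Lipschitz bound $\theta(f)=\lim_\alpha\theta'(f_\alpha)$ in $C_b(Y)$, and each $\theta'(f_\alpha)\in C_0(Y)$, which is closed, so $\theta(f)\in C_0(Y)$.

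\emph{Uniqueness and the converse statement.} If two bounded quasi-linear maps $C_0(X)\to E$ agree on $C_c(X)$, then for $f\in C_0(X)$ and $f_\alpha\in C_c(X)$ with $f_\alpha\to f$ the Lipschitz bound forces them to agree at $f$; so the extension constructed above is unique, and, all our bounded quasi-linear maps being continuous (by the functional case and Theorem \ref{bddiscont}), it is the unique bounded quasi-linear extension. Conversely, if $\theta\colon C_0(X)\to E$ is a bounded quasi-linear map, then $\theta':=\theta|_{C_c(X)}$ is positive, satisfies $\|\theta'\|\le\|\theta\|<\infty$, and is linear on each $B(f)$, $f\in C_c(X)$ — which is contained in $C_c(X)$, since $\phi\circ f$ with $\phi(0)=0$ is supported in $supp\,f$ — so $\theta'$ is a bounded quasi-linear map of which $\theta$ is the (necessarily unique) extension. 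The main obstacle I expect is not a single hard step but bookkeeping: making the correspondence of Remark \ref{RemBRT}\ref{mrDTM} round-trip cleanly when the domain is $C_c^+(X)$ rather than $C_0^+(X)$, and justifying that weak convergence on uniformly bounded in variation families is detected by integration against $C_c^+(X)$, which is exactly what delivers the continuity of $w$ in the operator cases.
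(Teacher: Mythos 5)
Your proof is correct in outline but follows a genuinely different route from the paper. The paper disposes of the scalar case by citing \cite[Th. 4.8]{Butler:QLFLC} and then observes that the same direct extension argument (approximate $f\in C_0(X)$ uniformly by functions in $C_c(X)$, use the Lipschitz estimate for bounded quasi-linear functionals to define $\theta(f)$ as the limit, and verify quasi-linearity of the extension) runs verbatim with $\theta'$ in place of $\rho$ and $\theta(f)$ in place of the limit $L$, uniformly for $E=\mathbb{R}$, $C_b(Y)$, $C_0(Y)$. You instead route everything through the representation theory: build the finite topological measure $\mu_{\theta'}$ (pointwise, the measures $w_y$) from $\theta'$, define the extension by integration, obtain quasi-linearity, the Lipschitz bound and the norm identity from Remark \ref{RemBRT}, and assemble the operator-valued case from the scalar one via continuity of $y\mapsto w_y$ and Theorem \ref{wEq}. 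This makes the vector case a pointwise corollary of the scalar case and keeps the argument inside the present paper's framework, at the cost of invoking the full representation machinery where the paper only needs a density/Lipschitz argument.

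The one step you should pin down is the recovery $\theta'(f)=\int_X f\,d\mu_{\theta'}$ for $f\in C_c^+(X)$. As quoted in Remark \ref{RemBRT}\ref{mrDTM}, the statement that a bounded (p-conic) quasi-linear functional has the form of part \ref{prt1} with $\mu=\mu_{\mathcal R}$ is phrased for functionals on $C_0^+(X)$; applied to a functional known to be quasi-linear only on $C_c(X)$ it is precisely the nontrivial content of the scalar case, so quoting it in that form would be circular. What you need is the $C_c$-domain version of the representation theorem (e.g. \cite[Th. 3.9]{Alf:ReprTh} for quasi-integrals on $C_c(X)$, or the corresponding statements in \cite{Butler:QLFLC} and \cite{Butler:ReprDTM}); it is available and makes your argument go through, but cite it explicitly rather than the $C_0$ version. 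Two smaller remarks: linearity of your extension on each $B(f)$, $f\in C_0(X)$, needs no density argument at all, since integration against a finite topological measure is a quasi-linear functional on $C_0(X)$ by Remark \ref{RemBRT}\ref{RHOsvva}; and as literally written your density argument is incomplete, because two elements of $B(f)\cap C_c(X)$ must first be placed in a common subalgebra $B(f')$ with $f'\in C_c(X)$ before linearity of $\theta'$ can be applied. The remaining points (continuity of $w$ from uniform boundedness in variation plus testing against $C_c$, the $C_0(Y)$-valued case via closedness of $C_0(Y)$ in $C_b(Y)$, uniqueness from the Lipschitz bound, and the converse restriction statement) are fine.
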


\begin{proof}
For $E=\mathbb{R}$ this is \cite[Th. 4.8]{Butler:QLFLC}. For $E = C_0(Y)$ or $C_b(Y)$
one can use almost verbatim the proof of \cite[Th. 4.8]{Butler:QLFLC}, replacing $ \rho$ with $\theta'$ and $L $ with $ \theta(f)$.
\end{proof} 

\begin{lemma}  \label{thetBfSe}
Suppose $X$ and $Y$ are LC,  $ \theta: C_c(X) \longrightarrow C_0(Y)$ is a quasi-linear map, and $ f \in C_c(X)$.
Let $\phi $ be continuous on  $\overline{f(X)} \cup \overline{\theta(f)}$ ($  \phi(0) = 0$ if $X$ is noncompact). 
Then  $ \theta(\phi (f)) = \phi (\theta(f))$, and $ \theta (B(f))  \subseteq B(\theta(f))$.
\end{lemma}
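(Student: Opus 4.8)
The plan is to pass to the pointwise functionals and invoke Theorem~\ref{rhosimple}. Fix $y\in Y$ and set $\rho_y(g)=\theta(g)(y)$ for $g\in C_c(X)$. Then $\rho_y$ is positive, and, since $\theta$ is multiplicative on each subalgebra $B(g)$, so is $\rho_y$; thus $\rho_y$ is a quasi-linear functional that is multiplicative on every singly generated subalgebra. By Theorem~\ref{rhosimple}(A) (the version on $C_c(X)$ covered by the proof cited after that theorem, or else after extending $\theta$ to $C_0(X)$ via Theorem~\ref{extfromCcOp}), $\rho_y$ is simple. I would use two consequences: $\rho_y(f)\in f(X)$ (Theorem~\ref{rhosimple}(ii)) and the spectral-mapping identity $\rho_y(\phi\circ f)=\phi(\rho_y(f))$ for continuous $\phi$ on an interval containing $f(X)$, with $\phi(0)=0$ when $X$ is noncompact (Theorem~\ref{rhosimple}(iii)).

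Next I would record the inclusion $\overline{\theta(f)}\subseteq\overline{f(X)}$: since $\theta(f)(y)=\rho_y(f)\in f(X)$ for every $y$, we get $\theta(f)(Y)\subseteq f(X)$. Consequently the hypothesis ``$\phi$ continuous on $\overline{f(X)}\cup\overline{\theta(f)}$'' reduces to ``$\phi$ continuous on $\overline{f(X)}$'', and $\phi\circ\theta(f)$ is a well-defined continuous function on $Y$. Also $0\in f(X)$ whenever $X$ is noncompact, because then $f\in C_c(X)$ vanishes off its compact support. Extend $\phi$ by Tietze's extension theorem to a continuous function on an interval $[a,b]\supseteq\overline{f(X)}$, keeping $\phi(0)=0$ in the noncompact case; this changes neither $\phi\circ f$ (whose argument stays in $f(X)$) nor $\phi\circ\theta(f)$ (whose argument stays in $\theta(f)(Y)\subseteq f(X)$), and $\phi\circ f\in C_c(X)$, so $\theta(\phi\circ f)$ is defined.

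Applying Theorem~\ref{rhosimple}(iii) to $\rho_y$ with this extended $\phi$ gives, for every $y\in Y$,
\[
\theta(\phi\circ f)(y)=\rho_y(\phi\circ f)=\phi(\rho_y(f))=\phi(\theta(f)(y))=(\phi\circ\theta(f))(y),
\]
hence $\theta(\phi\circ f)=\phi\circ\theta(f)$; both sides lie in $C_0(Y)$ (the left by the hypothesis on $\theta$, so also the right). For the containment $\theta(B(f))\subseteq B(\theta(f))$, let $g\in B(f)$, say $g=\psi\circ f$ with $\psi\in C(\overline{f(X)})$ and $\psi(0)=0$ if $X$ is noncompact. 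Since $\overline{\theta(f)}\subseteq\overline{f(X)}$, $\psi$ is continuous on $\overline{f(X)}\cup\overline{\theta(f)}$, so the first part yields $\theta(g)=\psi\circ\theta(f)$; moreover $\psi|_{\overline{\theta(f)}}$ is continuous and vanishes at $0$ whenever $Y$ is noncompact (then $0\in\overline{\theta(f)}$ and $\psi\circ\theta(f)=\theta(g)\in C_0(Y)$, which forces $\psi(0)=0$). Therefore $\theta(g)\in B(\theta(f))$.

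The only real content is Theorem~\ref{rhosimple}: once the pointwise functionals $\rho_y$ are recognized as simple quasi-linear functionals, the spectral-mapping identity and the subalgebra containment follow at once. I expect the fussiest part to be the separate tracking of the normalizations $\phi(0)=0$ and $\psi(0)=0$ according to whether $X$ and, independently, $Y$ is compact, together with the elementary but essential observation $\overline{\theta(f)}\subseteq\overline{f(X)}$, which both legitimizes the composition $\phi\circ\theta(f)$ and reduces the stated continuity requirement on $\phi$ to continuity on $\overline{f(X)}$.
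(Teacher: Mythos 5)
Your proof's first move --- ``since $\theta$ is multiplicative on each subalgebra $B(g)$, so is $\rho_y$'' --- assumes a hypothesis the lemma does not state: a quasi-linear map is only required to be positive and linear on each $B(f)$ (Definition \ref{qhDef}); multiplicativity is exactly what upgrades it to a quasi-homomorphism. Under the literal hypothesis your strategy collapses at the start: $\rho_y$ is then integration against an arbitrary finite topological measure $w_y$ (Theorem \ref{wEq}), it need not be simple, $\theta(f)(y)$ need not lie in $f(X)$, and neither the inclusion $\overline{\theta(f)}\subseteq\overline{f(X)}$ nor the identity $\rho_y(\phi\circ f)=\phi(\rho_y(f))$ is available. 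Indeed the conclusion itself fails for merely quasi-linear maps: take $\theta=2\eta$ with $\eta$ a nonzero quasi-homomorphism (a quasi-linear map by Remark \ref{tTheta}) and $\phi(t)=t^2$; then $\theta(\phi(f))=2\eta(f^2)=2(\eta(f))^2$ while $\phi(\theta(f))=4(\eta(f))^2$. So some multiplicativity must enter any proof, and asserting it as given is the genuine gap in your write-up.

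That said, the multiplicativity you assume is also what the paper's own proof uses, only tacitly: it fixes $g\in C_c^+(X)$ with $g=1$ on $\mathrm{supp}\,f$, approximates $\phi$ uniformly on $\overline{f(X)}\cup\overline{\theta(f)}$ by polynomials $p$, controls $|\rho_y(\phi\circ f)-\rho_y(p\circ f)|\le\|\phi\circ f-p\circ f\|\,\rho_y(g)$ via \cite[formula (3.1)]{Butler:ReprDTM}, and then invokes $\theta(p(f))=p(\theta(f))$ --- an identity that needs $\theta(f^k)=(\theta(f))^k$, i.e.\ multiplicativity on $B(f)$, not just linearity. Read with that (evidently intended) quasi-homomorphism hypothesis, your route is a correct alternative and genuinely different: pointwise simplicity plus Theorem \ref{rhosimple} (the same mechanism the paper uses for Theorem \ref{qhEqDef}, (4)$\Rightarrow$(1)) replaces the Weierstrass-approximation argument; your observation $\overline{\theta(f)}\subseteq\overline{f(X)}$ and the bookkeeping of $\phi(0)=0$ and $\psi(0)=0$ are fine. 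The remaining loose ends are the $C_c$-versus-$C_0$ domain of Theorem \ref{rhosimple}, which you flag, and the fact that the extension via Theorem \ref{extfromCcOp} requires boundedness (automatic for quasi-homomorphisms, Lemma \ref{ThetElPro}\ref{1inducedQLF}); the paper's approximation argument works directly on $C_c(X)$ and never needs simplicity of the $\rho_y$.
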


\begin{proof} 
Choose $g \in C_c^+(X), \, g \le 1$ such that $ g=1$ on $supp f$.
For $ \epsilon >0$  let $p_n$ be a polynomial on compact $\overline{f(X)} \cup \overline{\theta(f)}$  ($p_n(0)=0$ for a noncompact $X$) 
such that 
$ \|\phi -  p_n  \|, \| \theta(g) \|  \|\phi -  p_n  \| < \epsilon$.
Clearly, $p_n \circ f, \phi \circ f$ are compactly supported. 
Let $y \in Y$. With notations $ \rho(f) = \theta (f) (y)$,  using \cite[formula (3.1)]{Butler:ReprDTM}  we have
\begin{align*}
|\theta(\phi \circ f)(y)  &-  \theta(p_n \circ f) (y)  |  = | \rho(\phi \circ f) - \rho (p_n \circ f) |  \le   \|\phi \circ f   -  p_n \circ f  \|   \rho(g)\\
&=   \| \phi \circ f - p_n \circ f  \| \theta(g)(y) \le   \| \phi \circ f - p_n \circ f  \|  \| \theta(g)\| < \epsilon.
 \end{align*}
So $\|  \theta(\phi ( f)) -  \theta(p_n( f)) \| \le \epsilon$. By linearity of $ \theta$ on subalgebras $ \theta(p_n ( f)) = p_n (\theta(f))$, so 
$\| \theta(\phi(f)) - \phi(\theta(f)) \| \le \|  \theta(\phi(f)) - \theta(p_n (f)) \| + \|  p_n (\theta(f)) -  \phi(\theta(f)) \| < 2 \epsilon$. 
Thus,  $ \theta(\phi (f)) = \phi (\theta(f))$.
\end{proof}

\begin{remark} \label{ThetMul}
Suppose $B_1, B_2$ are Banach algebras, $ \theta: B_1 \longrightarrow  B_2$ is a linear map such that $ \theta(f^2) = (\theta(f))^2$ for 
any $ f \in B_1$. Then $ \theta$ is multiplicative, i.e. $ \theta (fg) = \theta(f) \theta(g)$. This follows from equality of
$\theta( (f+g)^2)$ and $ (\theta(f+g))^2$. Likewise, if $ \theta$ is a (conic) quasi-linear map such that $ \theta(f^2) = (\theta(f))^2$ 
for any $ f \in C_0(X)$ then $ \theta$ is multiplicative on each subalgebra $B(h)$ (resp., on each cone $A^+(h)$), i.e. 
$ \theta$ is a (conic) quasi-homomorphism. The next theorem gives criteria
for a (conic) quasi-linear map to be a (conic) quasi-homomorphism.
\end{remark}

\begin{theorem} \label{qhEqDef}
Let $X$, $Y$ be LC spaces, 
Suppose $\theta: C_0(X) \longrightarrow C_0(Y)$ is a quasi-linear map (respectively, a conic quasi-linear map).
TFAE:
\begin{enumerate}[leftmargin=0.25in, label=(\arabic*),ref=(\arabic*)]
\item \label{qhontoB1}
$\theta(\phi (f)) = \phi( \theta(f))$ for any $ f \in C_0(X)$ and any $ \phi  \in C(K)$, $ \phi(0) = 0$ if $X$ is noncompact; (resp., $ \phi$ is
also nondecreasing). So $\theta (B(f)) = B(\theta(f))$  (resp., $ \theta (A^+(f)) = A^+(\theta(f))$). 
Here  $K$ is any compact containing $ \overline{f(X)}$ and  $\overline{\theta(f)}$. 
\item \label{qhontoB2}
$\theta(\phi (f)) = \phi( \theta(f))$  for any $ f \in C_0(X)$ and any absolutely monotone $ \phi  \in C(K)$, $ \phi(0) = 0$ if $X$ is noncompact.
Here  $K$ is any compact containing $ \overline{f(X)}$ and  $\overline{\theta(f)}$. 
\item \label{qhontoB3}
$ \theta(f^2) = (\theta(f))^2$ for any $ f \in C_0(X)$.
\item \label{qhontoB4}
 $ \theta$ is multiplicative on each subalgebra $B(f)$, i.e. $ \theta$ is a quasi-homomorphism; (resp., on each cone $A^+(f)$, i.e. 
 $ \theta$ is a conic quasi-homomorphism).
\item \label{qhontoB5}
$\theta(p (f)) = p( \theta(f))$ for any $ f \in C_0(X)$ and any polynomial $p$ on $K$, $ p(0) = 0$ if $X$ is noncompact; 
(resp., $p$ has nonnegative coefficients).
Here  $K$ is any compact containing $ \overline{f(X)}$ and  $\overline{\theta(f)}$. 
\end{enumerate}
\end{theorem}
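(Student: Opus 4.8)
The plan is to prove the chain $\ref{qhontoB1}\Rightarrow\ref{qhontoB2}\Rightarrow\ref{qhontoB3}\Rightarrow\ref{qhontoB4}\Rightarrow\ref{qhontoB1}$ together with $\ref{qhontoB1}\Rightarrow\ref{qhontoB5}\Rightarrow\ref{qhontoB3}$, pushing the one substantial point down to the scalar case through the functionals $\rho_y(g)=\theta(g)(y)$, which are (p-conic) quasi-linear functionals on $C_0(X)$ by Lemma~\ref{ThetElPro}\ref{1inducedQLF}, and then invoking Theorem~\ref{rhosimple}. Four of the six arrows are essentially free. For $\ref{qhontoB1}\Rightarrow\ref{qhontoB2}$ and $\ref{qhontoB1}\Rightarrow\ref{qhontoB5}$: an absolutely monotone function, and a polynomial with nonnegative coefficients, each vanishing at $0$ when $X$ is noncompact, are continuous and --- having nonnegative first derivative --- nondecreasing, hence lie among the test functions allowed in \ref{qhontoB1}. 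For $\ref{qhontoB5}\Rightarrow\ref{qhontoB3}$ take $p(t)=t^2$. For $\ref{qhontoB3}\Rightarrow\ref{qhontoB4}$: this is recorded in Remark~\ref{ThetMul} --- polarize the identity $\theta((g+h)^2)=(\theta(g+h))^2$ for $g,h\in B(f)$ (resp. $g,h\in A^+(f)$) and cancel, using linearity (resp. preservation of nonnegative linear combinations) of $\theta$ on the relevant subalgebra (resp. cone).

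For $\ref{qhontoB2}\Rightarrow\ref{qhontoB3}$ I would first note that $\phi(t)=t^2$ is absolutely monotone on $[0,\infty)$, and that for $f\ge 0$ positivity (resp. $0=\theta(0)\le\theta(f)$) gives $\theta(f)\ge 0$, so $\overline{f(X)}\cup\overline{\theta(f)}\subseteq[0,\infty)$ and \ref{qhontoB2} yields $\theta(f^2)=(\theta(f))^2$ for every $f\ge 0$; for a conic map (where one works on $C_0^+(X)$) this already is \ref{qhontoB3}. For a quasi-linear map and arbitrary $f\in C_0(X)$, decompose $f=f^+-f^-$ with $f^+f^-=0$: Lemma~\ref{ThetElPro}\ref{qlmADD} gives $\theta(f)=\theta(f^+)-\theta(f^-)$ and $\theta(f^2)=\theta((f^+)^2)+\theta((f^-)^2)=(\theta(f^+))^2+(\theta(f^-))^2$, while the nonnegative case applied to $|f|=f^++f^-$ gives $(\theta(f^+)+\theta(f^-))^2=\theta(f^2)$; comparing forces $\theta(f^+)\theta(f^-)=0$, hence $\theta(f^2)=(\theta(f))^2$.

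The arrow $\ref{qhontoB4}\Rightarrow\ref{qhontoB1}$ carries the real content. Fix $f\in C_0(X)$ and $y\in Y$. By \ref{qhontoB4}, $\rho_y$ is multiplicative on $B(f)$ (resp. $A^+(f)$), so $\rho_y$ is simple by Theorem~\ref{rhosimple}\ref{mult} (resp. \ref{mult1}), and then parts \ref{ptm},\ref{phiout} (resp. \ref{ptm1},\ref{phiout1}) of Theorem~\ref{rhosimple} give $\rho_y(f)\in f(X)$ and $\rho_y(\psi\circ f)=\psi(\rho_y(f))$ for every $\psi\in C([a,b])$ with $f(X)\subseteq[a,b]$ (nondecreasing in the conic case, and vanishing at $0$ when $X$ is noncompact). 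Given a test function $\phi$ as in \ref{qhontoB1}, extend $\phi|_{\overline{f(X)}}$ by a Tietze argument to such a $\psi$, preserving monotonicity and the value $0$ at $0$; since $f$ and $\rho_y(f)$ both take values in $\overline{f(X)}$,
\[
\theta(\phi(f))(y)=\rho_y(\phi\circ f)=\rho_y(\psi\circ f)=\psi(\rho_y(f))=\phi(\rho_y(f))=\phi(\theta(f))(y).
\]
Since $y$ is arbitrary, $\theta(\phi(f))=\phi(\theta(f))$; moreover $\theta(f)(Y)\subseteq f(X)$ forces $\overline{\theta(f)}\subseteq\overline{f(X)}$, which yields $\theta(B(f))=B(\theta(f))$ (resp. $\theta(A^+(f))=A^+(\theta(f))$) and shows the statement is independent of the chosen compact $K$. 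I expect this last step to be the main obstacle --- not the scalar statement itself (Theorem~\ref{rhosimple}), but reconciling the function class of \ref{qhontoB1}, defined on a compact set known a priori only to contain $\overline{f(X)}\cup\overline{\theta(f)}$, with the interval-based formulation of Theorem~\ref{rhosimple}, and keeping the conic bookkeeping straight, where sign-changing $f$ is awkward and one either restricts to $C_0^+(X)$ or reuses the orthogonal splitting above.
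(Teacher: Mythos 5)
Your proposal is correct and follows essentially the same route as the paper: the cycle is closed by Remark \ref{ThetMul} for \ref{qhontoB3}$\Rightarrow$\ref{qhontoB4} and by applying Theorem \ref{rhosimple} pointwise to $\rho_y(f)=\theta(f)(y)$ for \ref{qhontoB4}$\Rightarrow$\ref{qhontoB1}, with the remaining arrows treated as specializations. The only differences are minor: you attach \ref{qhontoB5} via \ref{qhontoB1}$\Rightarrow$\ref{qhontoB5}$\Rightarrow$\ref{qhontoB3} instead of the paper's \ref{qhontoB4}$\Rightarrow$\ref{qhontoB5}$\Rightarrow$\ref{qhontoB3}, and you write out the $f^+,f^-$ orthogonal-splitting detail behind \ref{qhontoB2}$\Rightarrow$\ref{qhontoB3} (needed because $t\mapsto t^2$ is absolutely monotone only on $[0,\infty)$), which the paper disposes of with a citation to Widder.
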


\begin{proof}
\ref{qhontoB1} $\Longrightarrow$  \ref {qhontoB2} $\Longrightarrow$  \ref {qhontoB3},  
\ref{qhontoB4} $\Longrightarrow$  \ref {qhontoB5} $\Longrightarrow$  \ref {qhontoB3}   
are clear (see \cite[p.154]{Widder}), 
and \ref{qhontoB3} $\Longrightarrow$  \ref{qhontoB4} by Remark \ref{ThetMul}.  
\ref{qhontoB4} $\Longrightarrow$  \ref{qhontoB1} since it holds pointwise applying Theorem \ref{rhosimple} to 
$ \rho(f) = \theta(f)(y)$ for each $y$.
\end{proof}

\begin{lemma} \label{rhofg0mult}
Suppose $ f, g \in C_0^+(X), f\, g = 0$, $X,Y$ are LC.  If $ \rho$ is a simple quasi-integral on $C_0(X)$  then $ \rho(f) \rho(g) = 0$. 
If  $\theta: C_0(X) \longrightarrow C(Y)$ is a conic quasi-homomorphism, then $ \theta(f) \theta(g) = 0$. 
Hence, a (conic)  quasi-homomorphism $\theta: C_0(X) \longrightarrow C_0(Y)$ is a strong (conic) quasi-linear map.
\end{lemma}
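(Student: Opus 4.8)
The plan is to deduce all three assertions from the first one, which I prove by a squaring trick; after that nothing but bookkeeping remains.

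\emph{First assertion.} Let $\rho$ be a simple quasi-integral on $C_0(X)$ and let $f,g\in C_0^+(X)$ with $fg=0$. Since $\rho$ is the quasi-integral with respect to a simple (deficient) topological measure, it is orthogonally additive on disjointly supported nonnegative functions (Remark~\ref{RemBRT}\ref{RHOsvva}), and by Theorem~\ref{rhosimple} (condition \ref{fsquare}, resp.\ \ref{fsquare1}) it satisfies $\rho(\psi^{2})=(\rho(\psi))^{2}$ for every $\psi\in C_0(X)$. Because $fg=0$ we have $(f+g)^{2}=f^{2}+g^{2}$ and $f^{2}g^{2}=0$, so applying orthogonal additivity to the pairs $(f,g)$ and $(f^{2},g^{2})$ gives
\[
(\rho(f)+\rho(g))^{2}=\rho\big((f+g)^{2}\big)=\rho(f^{2}+g^{2})=\rho(f^{2})+\rho(g^{2})=(\rho(f))^{2}+(\rho(g))^{2}.
\]
Expanding the left-hand side yields $2\rho(f)\rho(g)=0$, hence $\rho(f)\rho(g)=0$. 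This squaring identity is the only genuinely non-routine point.

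\emph{Second assertion.} Let $\theta\colon C_0(X)\to C(Y)$ be a conic quasi-homomorphism and $f,g\in C_0^+(X)$ with $fg=0$. Fix $y\in Y$. By Lemma~\ref{ThetElPro}\ref{1inducedQLF} the functional $\rho:=\theta(\cdot)(y)$ is a simple (p-conic) quasi-linear functional, so the first assertion gives $\theta(f)(y)\,\theta(g)(y)=\rho(f)\rho(g)=0$. Since $y\in Y$ was arbitrary, $\theta(f)\theta(g)=0$; the identical argument — with $\rho$ now a simple quasi-linear functional — handles a (non-conic) quasi-homomorphism $\theta$.

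\emph{Last assertion.} Let $\theta\colon C_0(X)\to C_0(Y)$ be a quasi-homomorphism (respectively, a conic quasi-homomorphism). One of the two requirements for $\theta$ to be a strong (conic) quasi-linear map in the sense of Definition~\ref{qhDef}, namely $fg=0\Rightarrow\theta(f)\theta(g)=0$ for $f,g\in C_0^+(X)$, is precisely the second assertion. The other requirement, $\theta(B(f))\subseteq B(\theta(f))$ (resp.\ $\theta(A^+(f))\subseteq A^+(\theta(f))$) for every $f\in C_0(X)$, follows from Theorem~\ref{qhEqDef}, since there the statement ``$\theta$ is a (conic) quasi-homomorphism'' (condition \ref{qhontoB4}) is equivalent to $\theta(B(f))=B(\theta(f))$, resp.\ $\theta(A^+(f))=A^+(\theta(f))$ (condition \ref{qhontoB1}). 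Hence $\theta$ is strong. I do not anticipate any real obstacle beyond isolating the squaring identity in the first step.
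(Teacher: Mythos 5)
Your proof is correct, but your argument for the first assertion is genuinely different from the paper's. The paper works directly with the underlying simple (deficient) topological measure $\mu$: the cozero sets $U=Coz\,f$ and $V=Coz\,g$ are disjoint open sets, so they cannot both have $\mu$-value $1$; if, say, $\mu(U)=0$, then a cited integration result (\cite[Th. 49]{Butler:Integration}) forces $\rho(f)=\int f\,d\mu=0$, whence $\rho(f)\rho(g)=0$. You instead stay entirely at the level of the functional, combining orthogonal additivity with the characterization $\rho(\psi^2)=(\rho(\psi))^2$ of simple quasi-integrals from Theorem \ref{rhosimple}, and extracting $2\rho(f)\rho(g)=0$ from $(\rho(f)+\rho(g))^2=(\rho(f))^2+(\rho(g))^2$. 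Your route is self-contained (no appeal to the external integration theorem or to the measure itself), while the paper's route additionally identifies which of the two values vanishes in terms of which cozero set is null — a mild structural bonus, though irrelevant for the statement. For the second assertion you reduce pointwise via Lemma \ref{ThetElPro}\ref{1inducedQLF}, where the paper invokes Theorem \ref{wEq}\ref{wEqiii}; both give simplicity of $\rho(h)=\theta(h)(y)$, so this is only a cosmetic difference. For the final assertion you are in fact more explicit than the paper, which treats it as immediate: you correctly note that strongness requires both the orthogonality property just proved and the inclusion $\theta(B(f))\subseteq B(\theta(f))$ (resp. $\theta(A^+(f))\subseteq A^+(\theta(f))$), the latter supplied by Theorem \ref{qhEqDef}, which precedes this lemma, so there is no circularity.
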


\begin{proof}
Since $\rho$ is simple, $ \rho(f)   = \int 	f \, d\mu$ for some simple (deficient) topological measure  $ \mu$.
Open sets $ U  = Coz \, f$ and $V = Coz \, g$ are disjoint, so we can not have $ \mu(U) = \mu(V) = 1$. 
Say, $ \mu(U) = 0$. Then by \cite[Th. 49]{Butler:Integration} $\rho(f) = 0$, so $ \rho(f) \rho(g) = 0$.   

If $\theta: C_0(X) \longrightarrow C(Y)$ is a conic quasi-homomorphism, then for each $y$ by Theorem \ref{wEq}\ref{wEqiii}
the functional  $ \rho(h) = \theta (h) (y)$ is simple, so $\theta(f)(y) \theta(g) (y) = \rho(f) \rho(g) =0$.
\end{proof} 

\begin{remark} \label{tTheta}
If $ \theta$ is a (conic) quasi-linear map or a strong (conic) quasi-linear map, then so is $t \theta$ for any $t \ge 0$; 
it correspond to functions $ tw$, where $w$ is from Theorem \ref{wEq}.
The sum of two (conic) quasi-linear maps is a (conic) quasi-linear map, so 
(conic) quasi-linear maps constitute a positive cone. 
Let $\theta $ be a (conic) quasi-homomorphism. For any positive $ t \neq 1$ the map $t \theta$ is not multiplicative
on any nonzero singly generated subalgebra (resp., on singly generated cone), 
but satisfies $ (t\theta) (B(f)) \subseteq B(\theta(f))$ (resp.,  $ (t\theta) (A^+(f)) \subseteq A^+(\theta(f))$) for any $ f \in C_0(X)$.  
Thus, $ t \theta$ is a strong (conic) quasi-linear map;  it is not a (conic) quasi-homomorphism for $ t \neq 1$.
\end{remark}
  
\begin{lemma} \label{QLMcompo}
If  $ \theta: C_0(X) \longrightarrow C_0(Y)$  is a strong (conic) quasi-linear map,  
and $ \xi: C_0(Y) \longrightarrow E$  is a (conic) quasi-linear map,
then $ \xi \circ \theta : C_0(X) \longrightarrow E$ is a (conic) quasi-linear map. 
\end{lemma}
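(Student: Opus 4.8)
The plan is to check the defining conditions of a (conic) quasi-linear map (Definition~\ref{qhDef}) for the composite $\eta := \xi\circ\theta$ one at a time, invoking the two halves of the \emph{strong} hypothesis on $\theta$ exactly where two images $\theta(g),\theta(h)$ must be made to lie in a \emph{common} singly generated subalgebra (resp.\ cone) of $C_0(Y)$.

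First I would dispose of the order part. In the quasi-linear case $\eta$ is positive: $f\ge 0\Rightarrow\theta(f)\ge 0$ (a quasi-linear map is positive) $\Rightarrow\xi(\theta(f))\ge 0$. In the conic case $\eta$ is monotone: $0\le g\le f$ gives $0=\theta(0)\le\theta(g)\le\theta(f)$ (using $\theta(0)=0$ from Lemma~\ref{ThetElPro} and property~(i) for $\theta$), hence $\eta(g)=\xi(\theta(g))\le\xi(\theta(f))=\eta(f)$ by property~(i) for $\xi$. Next, orthogonal additivity — property~(ii) for $\eta$ in the conic case, and the identity $\eta(f+g)=\eta(f)+\eta(g)$ when $fg=0,\ f,g\in C_0^+(X)$ in the quasi-linear case (cf.\ Lemma~\ref{ThetElPro}\ref{qlmADD}) — is where both parts of ``strong'' enter: from $fg=0$, $f,g\ge 0$ we get $\theta(f+g)=\theta(f)+\theta(g)$ by property~(ii) for $\theta$ and $\theta(f)\theta(g)=0$ by strength of $\theta$; since also $\theta(f),\theta(g)\ge 0$, orthogonal additivity of $\xi$ gives $\xi(\theta(f)+\theta(g))=\xi(\theta(f))+\xi(\theta(g))$, i.e.\ $\eta(f+g)=\eta(f)+\eta(g)$.

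The substantive step is linearity of $\eta$ on each singly generated subalgebra $B(f)$ (resp.\ preservation of nonnegative combinations on each cone $A^+(f)$), $f\in C_0(X)$. Fix $f$ and take $g,h\in B(f)$, $a,b\in\mathbb{R}$ (resp.\ $g,h\in A^+(f)$, $a,b\ge 0$). Since $B(f)$ (resp.\ $A^+(f)$) is closed under the relevant combinations by its description in Definition~\ref{cqlf}, $ag+bh$ lies in it, and linearity (resp.\ conic linearity) of $\theta$ there gives $\theta(ag+bh)=a\theta(g)+b\theta(h)$. The strong hypothesis on $\theta$ now yields $\theta(g),\theta(h)\in\theta(B(f))\subseteq B(\theta(f))$ (resp.\ $\in\theta(A^+(f))\subseteq A^+(\theta(f))$), so $\theta(g)$ and $\theta(h)$ both lie in the single subalgebra (resp.\ cone) of $C_0(Y)$ generated by $\theta(f)$; hence quasi-linearity (resp.\ conic quasi-linearity) of $\xi$ applies to this pair: $\xi(a\theta(g)+b\theta(h))=a\xi(\theta(g))+b\xi(\theta(h))$. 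Composing, $\eta(ag+bh)=\xi(\theta(ag+bh))=a\eta(g)+b\eta(h)$, as required.

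I do not anticipate a genuine obstacle: the only thing worth isolating is that ``strong'' is precisely the hypothesis that makes $\theta$ carry a singly generated subalgebra (cone) of $C_0(X)$ into a singly generated subalgebra (cone) of $C_0(Y)$, so that the (conic) linearity of $\xi$ on one such subalgebra (cone) becomes available; for a general quasi-linear $\theta$ the images $\theta(g),\theta(h)$ of two elements of $B(f)$ could sit in unrelated subalgebras and $\xi$ would tell us nothing about their combination. To finish I would simply observe that $\eta=\xi\circ\theta$ is positive (resp.\ monotone), orthogonally additive, and linear on each $B(f)$ (resp.\ conic-linear on each $A^+(f)$), hence a quasi-linear (resp.\ conic quasi-linear) map $C_0(X)\to E$.
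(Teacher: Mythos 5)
Your proof is correct, and since the paper states Lemma \ref{QLMcompo} without proof, your verification is precisely the routine argument it leaves implicit: the inclusion $\theta(B(f))\subseteq B(\theta(f))$ (resp. $\theta(A^+(f))\subseteq A^+(\theta(f))$) is exactly what lets the (conic) linearity of $\xi$ on the single subalgebra (cone) generated by $\theta(f)$ be applied to $\theta(g),\theta(h)$, and the first half of "strong" is what delivers orthogonal additivity in the conic case. The only quibble is that in the quasi-linear case the orthogonal-additivity check is redundant (Definition \ref{qhDef} asks only for positivity and linearity on each $B(f)$, so there only $\theta(B(f))\subseteq B(\theta(f))$ is actually used), and its cited justification via Lemma \ref{ThetElPro}\ref{qlmADD} would anyway require the target space to be of the form $C(Y)$ rather than a general $E$.
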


\begin{remark}
By Lemma \ref{rhofg0mult} and Lemma \ref{QLMcompo} the composition of two (conic) quasi-homomorphisms is a
(conic) quasi-homomorphism. The composition of two quasi-linear maps need not be a quasi-linear map, see Example \ref{qhOnXY} below.
\end{remark}

\begin{example} \label{2inducedQLF}
Suppose  $\theta: C_0(X) \longrightarrow C_0(Y)$ is a strong conic quasi-linear map or a bounded strong  quasi-linear map (in particular, a (conic) quasi-homomorphism).
Let $ \nu$ be a finite (deficient) topological  measure on $Y$  with corresponding bounded (p-conic)  quasi-linear functional $\xi$.
Let  $\rho: C_0(X) \longrightarrow \mathbb{R}$ given by $\rho(f) = \int_Y \theta(f)\, d \nu =( \xi \circ \theta) (f)$. 
By Lemma \ref{QLMcompo} $ \rho$ is a bounded (p-conic) quasi-linear functional. 
\end{example}

\begin{definition} \label{qh*def}
For a strong conic quasi-linear (respectively,  bounded strong quasi-linear) map $\theta: C_0(X) \longrightarrow C_0(Y)$  
we define its adjoint $\theta^*:  \mathbf{DTM}(Y) \longrightarrow  \mathbf{DTM}(X)$  (resp., $\theta^*: \mathbf{TM}(Y) \longrightarrow  \mathbf{TM}(X)$)
so that for $\nu \in \mathbf{DTM}(Y)$  (resp., $\nu \in  \mathbf{TM}(Y)$),  $\theta^* \nu$ 
is the finite deficient topological (resp., finite topological) measure on $X$ 
corresponding to functional $\rho(f) =  \int_Y \theta(f) \, d\nu = ( \xi \circ \theta) (f)$ in Example \ref{2inducedQLF}.
\end{definition}

Thus,  for a bounded strong quasi-linear map $ \theta$ and  $f \in C_0(X)$ or for a strong conic quasi-linear map  $\theta$ and $f \in C_0^+(X)$
\begin{align} \label{Intdq*}
 \int_Y \theta(f) \, d\nu = \int_X  f \, d \theta^*\nu. 
 \end{align}
 
\begin{remark}
If in Lemma \ref{QLMcompo}  $\theta$ and $ \xi$ are bounded, then $ \xi \circ \theta$ is bounded. 
Let $v: Z \longrightarrow \mathcal{M}_1$ corresponds to $\xi$ and $w: Z \longrightarrow  \mathcal{M}_2$ corresponds to $ \xi \circ \theta$ by Theorem \ref{wEq}.
By (\ref{Intdq*}) and Theorem \ref{wEq} for each $z \in Z$, 
$ \int_X f \, d(\theta^* v_z) = \int_Y \theta(f) \, d v_z = \xi(\theta(f))(y) =  \int_Z f \, d w_z$ for all $f \in C_0^+(X)$. 
By (\ref{EqByInt}), $ \theta^* v_z = w_z$.
\end{remark} 

\begin{lemma} 
Let $X, Y$ be LC. 
(I) For a bounded strong quasi-linear map  $\theta: C_0(X) \longrightarrow C_0(Y) $ 
its adjoint $\theta^*: \mathbf{TM}(Y)  \longrightarrow \mathbf{TM}(X)$ is continuous. 
If $\theta$ is a quasi-homomorphism then $\theta^*: Y^\sharp \longrightarrow X^\sharp$ is continuous.
(II) For a strong conic quasi-linear map $\theta: C_0(X) \longrightarrow C_0(Y) $  
its adjoint $\theta^*: \mathbf{DTM}(Y)  \longrightarrow  \mathbf{DTM}(X)$ is continuous. 
If $\theta$ is a conic quasi-homomorphism then $\theta^*: Y^{\flat} \longrightarrow X^{\flat}$ is continuous.
\end{lemma}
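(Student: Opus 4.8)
The plan is to read off continuity directly from the adjoint identity (\ref{Intdq*}) together with the integral description of the weak topology. Recall from Definition \ref{defwk} and Remark \ref{rmwkbase} that a net $\lambda_\alpha$ in $\mathbf{TM}(X)$ (resp. $\mathbf{DTM}(X)$) converges weakly to $\lambda_0$ exactly when $\int_X f\, d\lambda_\alpha \to \int_X f\, d\lambda_0$ for every $f\in C_0^+(X)$. So I would fix a net $\nu_\alpha \Longrightarrow \nu_0$ in $\mathbf{TM}(Y)$ (resp. $\mathbf{DTM}(Y)$) and set $\mu_\alpha=\theta^*\nu_\alpha$, $\mu_0=\theta^*\nu_0$; these are genuine finite (deficient) topological measures on $X$ by Definition \ref{qh*def}, Example \ref{2inducedQLF} and Remark \ref{RemBRT}. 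It then suffices to verify $\int_X f\, d\mu_\alpha \to \int_X f\, d\mu_0$ for each $f\in C_0^+(X)$.

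The key step is the observation that for $f\in C_0^+(X)$ the function $\theta(f)$ lies in $C_0^+(Y)$: by hypothesis $\theta$ maps into $C_0(Y)$, and $\theta$ is positive (a quasi-linear map is positive by Definition \ref{qhDef}; a conic quasi-linear map satisfies $\theta(f)\ge\theta(0)=0$ for $f\ge 0$ by monotonicity). Then (\ref{Intdq*}) gives $\int_X f\, d\mu_\alpha = \int_Y \theta(f)\, d\nu_\alpha \longrightarrow \int_Y \theta(f)\, d\nu_0 = \int_X f\, d\mu_0$, the convergence being weak convergence $\nu_\alpha\Longrightarrow\nu_0$ in $Y$ applied to the legitimate test function $\theta(f)\in C_0^+(Y)$. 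Hence $\mu_\alpha \Longrightarrow \mu_0$ and $\theta^*$ is continuous, in both part (I) and part (II).

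For the second assertion of each part I would first note that a quasi-homomorphism (resp. conic quasi-homomorphism) $\theta\colon C_0(X)\to C_0(Y)$ is automatically a bounded strong (resp. strong conic) quasi-linear map, by Lemma \ref{rhofg0mult} and Corollary \ref{qlfbd}, so $\theta^*$ is defined; then I would show $\theta^*$ preserves simplicity. Take $\nu\in Y^\sharp$ (resp. $Y^\flat$) and let $\rho(f)=\int_Y\theta(f)\, d\nu = \int_X f\, d\theta^*\nu$ be the quasi-integral with respect to $\theta^*\nu$. Since $\theta(f^2)=(\theta(f))^2$ for every $f$ (definition of quasi-homomorphism) and $\nu$ is simple, Theorem \ref{rhosimple} gives $\int_Y (\theta(f))^2\, d\nu = \bigl(\int_Y\theta(f)\, d\nu\bigr)^2$, so $\rho(f^2)=(\rho(f))^2$ for all $f\in C_0(X)$; applying Theorem \ref{rhosimple} once more, $\rho$ is simple, i.e. $\theta^*\nu\in X^\sharp$ (resp. $X^\flat$). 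Restricting the continuous map $\theta^*$ then yields the continuous maps $\theta^*\colon Y^\sharp\to X^\sharp$ and $\theta^*\colon Y^\flat\to X^\flat$.

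I do not anticipate a real obstacle. The entire argument hinges on the adjoint identity (\ref{Intdq*}) and on the fact — already contained in Theorem \ref{wEq}\ref{wEqII} and Theorem \ref{wEq}\ref{wEqIII} — that $\theta$ carries $C_0^+(X)$ into $C_0^+(Y)$, so that the functions $\theta(f)$ are admissible weak-topology test functions on $Y$. The only point requiring mild care is the bookkeeping of the test-function class: Remark \ref{rmwkbase} lets us test weak convergence using only $C_0^+$, which is precisely the cone on which the conic version of (\ref{Intdq*}) is asserted, so the same proof covers the bounded-quasi-linear case (where (\ref{Intdq*}) in fact holds on all of $C_0$) and the conic case uniformly.
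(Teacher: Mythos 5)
Your proposal is correct and follows essentially the same route as the paper: continuity is read off from the adjoint identity (\ref{Intdq*}) applied to test functions $\theta(f)$, and preservation of simplicity comes from the chain $\int f^2\, d\theta^*\nu = \int \theta(f^2)\, d\nu = \int (\theta(f))^2\, d\nu = \bigl(\int \theta(f)\, d\nu\bigr)^2 = \bigl(\int f\, d\theta^*\nu\bigr)^2$ together with Theorem \ref{rhosimple}. Your extra care about $\theta(C_0^+(X))\subseteq C_0^+(Y)$ and the $C_0^+$ test-function class is harmless bookkeeping that the paper's proof leaves implicit.
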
 

\begin{proof}
The proof is similar for both parts, and we shall show it for part (II). 
Let $\nu_{\alpha}  \Longrightarrow \nu, \, \nu_{\alpha}, \nu \in  \mathbf{DTM}(Y)$.  For every $f \in C_0(X)$,
$ \int_Y \theta(f) \, d \nu_{\alpha} \longrightarrow  \int_Y \theta(f) \, d \nu$, i.e.
$\int_X f \, d \theta^* \nu_{\alpha}  \longrightarrow   \int_X f \, d \theta^* \nu$,
so  $\theta^* \nu_{\alpha} \Longrightarrow \theta^* \nu$,   and $ \theta^*$ is continuous.
If $\nu$ is a simple deficient topological measure and $f \in C_0(X)$,  
then by (\ref{Intdq*}) and  Theorem \ref{rhosimple} for a quasi-homomoprhism $ \theta$ we have 
$ \int f^2 \, d \theta^* \nu = \int \theta(f^2) \, d \nu = \int (\theta(f))^2 \, d \nu = \left( \int \theta(f) \, d \nu \right)^2 =  \left( \int f\, d \theta^* \nu \right)^2$,   
so  $\theta^* \nu $ is simple.
\end{proof}

\begin{remark}
We introduce the concept of a conic quasi-linear map. 
When $X$ is compact our definition of a quasi-linear map coincides with the one in \cite{Grubb:qlm}. 
For quasi-homorphisms, our definition is close to one in \cite{AarnesGrubb}. In almost all previous works, 
including \cite{AarnesJohansenRustad}, \cite{Pedersen},   \cite{AlfImTrans},  \cite{Taraldsen:RegQHinLC}, \cite{Taraldsen:ITQMonLC},
a quasi-homomorphism is defined as 
a map from $C(X)$ to $C(Y)$, where $X, Y$ are compact, which is an algebra homomorphism of $B(f)$  onto (into) $B(\theta(f))$ for each $f  \in C(X)$.
In this case each quasi-homomorphism is bounded since $ \| \theta(f) \| \le \| \theta(1) \|$
for $f \in C(X), \| f \| \le 1$. Theorem \ref{qhEqDef} shows that the three different definitions of a quasi-homomorphism 
in all previous papers are equivalent to our definition.   

While working on this paper, the author came up with the idea of a (conic) quasi-linear map as a counterpart for (d-) image transformations, 
but later discovered that the definition of a quasi-linear map in a compact setting was earlier introduced by D. Grubb in  \cite{Grubb:Signed}  and
\cite{Grubb:qlm}. 
\end{remark}
 
\section{Correspondence between (d-) image transformations and (conic) quasi-linear maps} \label{ITqhCorr}

\begin{theorem} \label{ITqh}
Suppose $X$ and $Y$  are LC. There is a 1-1 correspondence between d-image transformations (respectively, image transformations) 
$q$ from $X$ to $Y$ and 
conic quasi-homomorphisms $\theta : C_0^+(X) \longrightarrow C_0^+(Y)$ 
(resp., quasi-homomorphisms $\theta : C_0(X) \longrightarrow C_0(Y)$), 
and there is a 1-1 correspondence between d-image transformations (resp., image transformations) 
$q$ and continuous k-proper functions $w: Y \longrightarrow X^{\flat}$ (resp.,   $w: Y \longrightarrow X^\sharp$).
Moreover,
\begin{enumerate}[leftmargin=0.25in, label=(h\arabic*),ref=(h\arabic*)]
\item \label{ITqh1}
$\theta(f)(y) =  \int_X f \, dw_y = \int_X f \, d(q^*\delta_y)$. 
\item \label{ITqh1.1}
For a quasi-homomorphism $\theta$, $\theta(f)(y) =  t_0 \in \overline{f(X)}$, where 
$ t_0 = \inf \{ t : w_y(\{ x: f(x) <t\}) =1 \} =  \sup \{ t : w_y(\{ x: f(x) <t\}) =0 \} = \inf \{ t : w_y(\{ x: f(x) > t\}) =0 \} = \sup \{ t : w_y(\{ x: f(x) >t\}) =1 \}$.
\item \label{ITqh1a}
$w = q^* \circ i_Y$, $ q = w^{-1} \circ \Lambda$.
\item \label{qhMon}
$ f \le g$ implies $ \theta(f) \le \theta(g)$.
\item \label{ITqh6}
$ \| \theta(f) \| \le \| f\|$. 
\item \label{ITqh1b} 
If $f_{\alpha} \nearrow f $ then $ \theta(f_{\alpha}) \nearrow \theta(f)$. 
For a quasi-homomorphism, if  $f_{\alpha} \searrow f $ then $ \theta(f_{\alpha}) \searrow \theta(f)$. 
\item \label{ITqh6a}
If $ f, g \in C_c^+(X)$ then $ \| \theta(f) - \theta(g) \| \le \| f -g\|$, and if $X$ is compact, then 
$ \| \theta(f) - \theta(g) \| \le \| f -g\|$ for any $f,g \in C(X)$. 
For a quasi-homomorphism, $\| \theta(f) - \theta(g) \| \le 2 \| f -g\|$. 
\item \label{ITqh2}
For an open set $U \subseteq X$, $q(U)$ is an open $F_{\sigma}$ set
\begin{align*} 
q(U) &= w^{-1} (U^\sharp) (\mbox{resp., }  w^{-1} (U^\flat))  = \bigcup\{ \theta(f)^{-1} (\{1\}): 0 \le f \le 1_U,  f \in C_c(X) \} \\
&= \bigcup\{ \theta(f)^{-1} ((0, \infty)): 0 \le f \le 1_U,  f \in C_c(X) \}.
\end{align*}
\item \label{ITqh3}
For a compact $K \subseteq X$
\begin{align*} 
q(K) &=  w^{-1} (K^\sharp)  (\mbox{resp., }  w^{-1} (K^\flat))  =\bigcap \{ \theta(f)^{-1} (\{ 1\}): 1_K \le f \le 1,   f \in C_c(X) \}    \\
& = \bigcap \{ \theta(f)^{-1} ([1, \infty)): 1_K \le f \le 1,   f \in C_c(X) \}. 
\end{align*}
\item \label{ITqh4}
For an image transformation $q$ and an open set $ A \subseteq \mathbb{R}$ or a closed set $A \subseteq \mathbb{R} \setminus \{ 0\}$ 
\begin{align} \label{qthetaf}
q(f^{-1}(A)) = (\theta(f))^{-1} (A),
\end{align}
and for compact $X$ this holds for any open or closed $ A \subseteq \mathbb{R}$. For a d-image transformation $q$ equality (\ref{qthetaf}) holds for
$A=[t, \infty)$ and $A=(t, \infty)$ for  almost every $t$ .
\item \label{ITqh4a}
For a d-image transformation $q$, $q(f^{-1}(A)) \subseteq (\theta(f))^{-1} (A)$ for any open $ A \subseteq \mathbb{R}$ 
and any closed set $A \subseteq \mathbb{R} \setminus \{ 0\}$.
\item \label{ITqh12}
$q(X) = Y$.
\item \label{ITqh15}
$(q(E))^{\flat}  = (q^*)^{-1}(E^{\flat})$ (resp., $(q(E))^{\sharp}  = (q^*)^{-1}(E^{\sharp})$ for a compact or an open set $E \subseteq X$.
\item \label{ITqh5}
$ \int_X f \, dq^*\nu = \int_Y \theta(f) \, d\nu \  $.
\item \label{ITqh7}
$\theta^* = q^*$.
\end{enumerate}
\end{theorem}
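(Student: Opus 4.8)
The plan is to route everything through the continuous $k$-proper function $w\colon Y\longrightarrow X^{\flat}$ (resp.\ $w\colon Y\longrightarrow X^{\sharp}$) that Theorem~\ref{ITw} attaches bijectively to a (d-) image transformation $q$ (via $w=q^{*}\circ i_{Y}$, $q=w^{-1}\circ\Lambda$) and that Theorem~\ref{wEq}\ref{wEqiii}, \ref{wEqIII} attaches bijectively to a (conic) quasi-homomorphism $\theta$ (via $\theta(f)(y)=\int_{X}f\,dw_{y}$). Composing these two bijections over their common middle term $w$ yields at once the asserted 1-1 correspondence between (d-) image transformations and (conic) quasi-homomorphisms, the correspondence with $k$-proper $w$, and \ref{ITqh1a}; and since $w_{y}=w(y)=(q^{*}\circ i_{Y})(y)=q^{*}(\delta_{y})$, it gives \ref{ITqh1}, $\theta(f)(y)=\int_{X}f\,dw_{y}=\int_{X}f\,d(q^{*}\delta_{y})$. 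Here $\Lambda$ is the basic (d-) image transformation of Example~\ref{staIT}.

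For the pointwise assertions I would fix $y$ and work with the functional $\rho(f)=\theta(f)(y)=\int_{X}f\,dw_{y}$, which is a (p-conic) quasi-linear functional by Lemma~\ref{ThetElPro}\ref{1inducedQLF} and is simple because $w_{y}\in X^{\sharp}$ (resp.\ $X^{\flat}$). Then \ref{qhMon} is order-preservation of integration (Remark~\ref{RemBRT}\ref{RHOsvva}), \ref{ITqh6} is $|\int f\,dw_{y}|\le\|f\|\,w_{y}(X)\le\|f\|$ from formula~(\ref{estin}) and $w_{y}(X)\in\{0,1\}$, \ref{ITqh1b} is Theorem~\ref{wEq}\ref{wEqi}, \ref{wEqii} (the decreasing-net half needs a topological measure, hence the quasi-homomorphism hypothesis), and \ref{ITqh6a} is Theorem~\ref{wEq}\ref{wEqII}, \ref{wEqIII} with $N=1$. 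Part~\ref{ITqh1.1} comes from Theorem~\ref{rhosimple}: $m_{f,w_{y}}$ is the point mass at $\theta(f)(y)\in\overline{f(X)}$; since for a topological measure $m_{f,w_{y}}(W)=w_{y}(f^{-1}(W))$ (the pushforward formula used in Theorem~\ref{rhosimple}, \cite[Th. 6.7]{Butler:ReprDTM}), the four sets $\{t\colon w_{y}(\{f<t\})=1\}$, $\{t\colon w_{y}(\{f<t\})=0\}$, $\{t\colon w_{y}(\{f>t\})=0\}$, $\{t\colon w_{y}(\{f>t\})=1\}$ are, respectively, $(\theta(f)(y),\infty)$, $(-\infty,\theta(f)(y)]$, $[\theta(f)(y),\infty)$, $(-\infty,\theta(f)(y))$, so all four inf/sup expressions equal $\theta(f)(y)$.

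The set-theoretic identities \ref{ITqh2}, \ref{ITqh3} begin from $q(U)=w^{-1}(U^{\sharp})$, $q(K)=w^{-1}(K^{\sharp})$ (resp.\ $U^{\flat},K^{\flat}$), which is just $q=w^{-1}\circ\Lambda$ together with $\Lambda(U)=U^{\sharp}$, $\Lambda(K)=K^{\sharp}$; $q(U)$ is open and $q(K)$ compact by \ref{IT1}. For the $C_{c}$-representations: if $w_{y}(U)=1$, inner regularity yields a compact $C\subseteq U$ with $w_{y}(C)=1$ and a Urysohn function $f\in C_{c}(X)$, $1_{C}\le f\le 1_{U}$ (Lemma~\ref{easyLeLC}), for which $\theta(f)(y)=\int f\,dw_{y}=1$ by \ref{ITqh1.1}; conversely $\theta(f)(y)>0$ with $0\le f\le 1_{U}$, $f\in C_{c}(X)$ forces $w_{y}(\{f>t\})=1$ for some $t>0$, hence $w_{y}(U)=1$ by monotonicity, and $\theta(f)(y)\in\{0,1\}$ in this range; so both unions equal $w^{-1}(U^{\sharp})$. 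The compact case is dual, using outer regularity, Lemma~\ref{easyLeLC}, and the vanishing of $\int f\,dw_{y}$ when $w_{y}(Coz\,f)=0$ (\cite[Th. 49]{Butler:Integration}). Part~\ref{ITqh12}, $q(X)=Y$, is $w_{y}(X)=1$ for every $y$, which I would deduce from $q$ not being identically $\emptyset$ together with these representations; this is one of the delicate points. Part~\ref{ITqh15} is immediate from $(q^{*}\nu)(E)=\nu(q(E))$, whence $(q(E))^{\sharp}=\{\nu\colon\nu(q(E))=1\}=\{\nu\colon (q^{*}\nu)(E)=1\}=(q^{*})^{-1}(E^{\sharp})$.

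Finally, \ref{ITqh4}, \ref{ITqh4a} and \ref{ITqh5}, \ref{ITqh7} form the core computation. For \ref{ITqh4}/\ref{ITqh4a} I combine \ref{ITqh1.1} with $q(f^{-1}(A))=w^{-1}((f^{-1}(A))^{\sharp})=\{y\colon w_{y}(f^{-1}(A))=1\}=\{y\colon m_{f,w_{y}}(A)=1\}=\{y\colon\theta(f)(y)\in A\}=(\theta(f))^{-1}(A)$, where the third equality is the pushforward formula — an identity for topological measures (hence the clean image-transformation statement for open $A$ and closed $A\subseteq\mathbb{R}\setminus\{0\}$) and only a one-sided, resp.\ a.e.-$t$, statement for deficient topological measures (hence the $\subseteq$ of \ref{ITqh4a} and the a.e.\ clause of \ref{ITqh4}); the role of $0\in A$ versus $0\notin A$ is governed by $w_{y}(X)=1$ from \ref{ITqh12} and is vacuous for compact $X$. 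Then \ref{ITqh5} follows from the layer-cake identity $\int_{Y}\theta(f)\,d\nu=\int_{0}^{\infty}\nu(\{\theta(f)>t\})\,dt$ and $\int_{X}f\,dq^{*}\nu=\int_{0}^{\infty}(q^{*}\nu)(\{f>t\})\,dt$ (formula~(\ref{rfformp})), together with $\{\theta(f)>t\}=q(f^{-1}((t,\infty)))$ for a.e.\ $t$ (from \ref{ITqh4}/\ref{ITqh4a}, which suffices under the integral) and $\nu(q(\cdot))=(q^{*}\nu)(\cdot)$; a general $f\in C_{0}(X)$ is handled by $f=f^{+}-f^{-}$ and formula~(\ref{intsplit}) in the topological-measure case. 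Part~\ref{ITqh7} is then immediate: by \ref{ITqh5} the functional $f\mapsto\int_{Y}\theta(f)\,d\nu$ is the quasi-integral with respect to $q^{*}\nu$, so $\theta^{*}\nu=q^{*}\nu$ by the uniqueness~(\ref{EqByInt}). I expect the main obstacles to be \ref{ITqh12} and making \ref{ITqh4a}/\ref{ITqh5} fully rigorous in the deficient case, where the pushforward relation degrades to an almost-everywhere statement.
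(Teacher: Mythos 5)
Your overall route is the paper's: everything is funneled through the continuous k-proper map $w$ via Theorems \ref{ITw} and \ref{wEq} (giving both 1-1 correspondences, \ref{ITqh1}, and \ref{ITqh1a}--\ref{ITqh6a}), pointwise simple functionals $\rho(f)=\theta(f)(y)$ together with the point-mass/pushforward facts from \cite{Butler:ReprDTM} for \ref{ITqh1.1} and \ref{ITqh2}--\ref{ITqh4a} (the paper simply cites \cite[Th. 6.9, L. 6.11]{Butler:ReprDTM} for \ref{ITqh1.1}), a distribution-function computation for \ref{ITqh5}, and the uniqueness statement (\ref{EqByInt}) for \ref{ITqh7}. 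Your layer-cake version of \ref{ITqh5} (formula (\ref{rfformp}) for $f\ge 0$, then $f=f^{+}-f^{-}$ via (\ref{intsplit}) in the topological-measure case) is a harmless variant of the paper's argument that $R_{1,q^{*}\nu,f}$ and $R_{1,\nu,\theta(f)}$ coincide (a.e.\ in the deficient case) and that the same interval of integration can be used.

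The one step that does not work as you describe it is \ref{ITqh12}. You propose to obtain $w_y(X)=1$ for every $y$ ``from $q$ not being identically $\emptyset$''; non-triviality of $q$ only says that some $q(A)\neq\emptyset$ and gives no control over a fixed $y$ --- what is needed is exactly that $q^{*}\delta_y\neq 0$, i.e.\ $y\in q(X)$, for each $y\in Y$. The paper does not deduce this from non-triviality: Theorem \ref{ITw} (and Theorem \ref{wEq}\ref{wEqiii}) already place $w_y=q^{*}\delta_y$ in $X^{\sharp}$ (resp.\ $X^{\flat}$), i.e.\ each $w_y$ is a simple (deficient) topological measure with $w_y(X)=1$, and then \ref{ITqh12} ``follows from \ref{ITqh2} or \ref{ITqh4}'': $q(X)=w^{-1}(X^{\sharp})=Y$, or take $A=\mathbb{R}$ in (\ref{qthetaf}). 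So the fix lies inside your own framework --- invoke the normalization of $w_y$ supplied by the correspondence, not non-triviality of $q$. Two smaller remarks: in \ref{ITqh2} your aside that $\theta(f)(y)\in\{0,1\}$ for $0\le f\le 1_U$ is false (half a Urysohn function gives $1/2$) but also unnecessary, since the sandwich $q(U)\subseteq\bigcup\theta(f)^{-1}(\{1\})\subseteq\bigcup\theta(f)^{-1}((0,\infty))\subseteq q(U)$ is what you (and the paper) actually use; and the ``a.e.\ $t$'' and one-sided statements you flag as obstacles in the deficient case are precisely what \cite[Th. 6.7]{Butler:ReprDTM} supplies, which is how the paper discharges \ref{ITqh4}, \ref{ITqh4a} and the deficient half of \ref{ITqh5}.
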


\begin{proof}
The proof for image transformations and d-image transformations is similar, and we shall prove the theorem for image transformations.
We again may write $w_y$ for the simple topological measure $w(y)$, where $w: Y \longrightarrow X^\sharp$ as in 
Theorems \ref{ITw}  and  \ref{wEq}.  \\
The stated 1-1 correspondence, \ref{ITqh1}, and \ref{ITqh1a} - \ref{ITqh6a}
are from Theorems \ref{ITw} and \ref{wEq}. \\
\ref{ITqh1.1}. Follows from \cite[Th. 6.9, L. 6.11]{Butler:ReprDTM}. \\
\ref{ITqh2}. For an open or a compact set $A \subseteq X$  by Theorem \ref{ITw} 
 $q(A) =  w^{-1}(A^\sharp)$.
Let $U \subseteq X$ be open. If $y \in q(U)$ then  $w_y(U) = 1$, 
so there is a compact $ C  \subseteq U$ such that $w_y(C) = 1$. 
For a Urysohn function $f$ with $1_C \le f \le 1_U$ 
by Remark \ref{RemBRT}\ref{mrDTM}   we have
$ 1 = w_y (C) \le \int_X f \, dw_y  \le w(U) = 1,$
so $\theta(f)(y) =1$, and $y  \in \theta(f)^{-1} (\{1\}) \subseteq \theta(f)^{-1} ((0, \infty)) $. 

If $0 \le f \le 1_U, f \in C_c(X)$ then for each $y \in Y$ by Remark \ref{RemBRT}\ref{mrDTM}  
$ \theta(f)(y) = \int_X  f \, dw_y \le w_y(U).$
If $ \theta(f)(y) >0$ then $w_y(U) >0$, i.e. $w_y (U) = 1$, i.e. $ w(y)\in U^\sharp$, i.e. $y \in w^{-1}(U^\sharp) = q(U)$. 
Thus, $(\theta(f))^{-1} ((0, \infty)) \subseteq q(U)$.  \\
\ref{ITqh3}. 
Let $K \subseteq X$ be compact.  
If $y \in q(K) = w^{-1}(K^\sharp)$, i.e. $w_y(K) =1$, then for  any $ f \in C_c(X)$ such that $ 1_K \le f \le 1$ we have 
$ 1= w_y(K) \le \int_X f\, dw_y  = \theta(f)(y) \le 1.$
Thus,  $y \in   \theta(f)^{-1}(\{1\}) \subseteq \theta(f)^{-1}([1, \infty))$. 
If $ y \in \bigcap \{ \theta(f)^{-1} ([1, \infty)): 1_K \le f \le 1,   f \in C_c(X) \}$ then $\int_X f\, dw_y \ge 1$ for each $  f \in C_c(X)$ such that $1_K \le f \le 1$.  
By Remark \ref{RemBRT}\ref{mrDTM}  $\, w_y(K)\ge 1$, so 
$w_y(K) = 1$, and $y \in w^{-1}(K^\sharp) = q(K)$. \\
\ref{ITqh4}. 
By part \ref{ITqh1}, Remark \ref{RemBRT}\ref{prt1}, and Theorem \ref{rhosimple}\ref{ptm} 
the point mass on $\mathbb{R}$ corresponding to $w_y$ is $ m_{w_y, f} =  \delta_{\theta(f)(y)} = \delta_{\int f \, dw_y}$,
and on specified subsets of $ \mathbb{R}$ by \cite[Th. 6.10]{Butler:ReprDTM}   $m_{w_y, f} = w_y \circ f^{-1} = w(y) \circ f^{-1}$. 
Note that for a closed set $A \subseteq \mathbb{R} \setminus \{ 0\}$, $ f^{-1}(A)$ is compact. 
Then 
 $ y \in (\theta(f))^{-1}(A)  \Longleftrightarrow  \delta_{\theta(f)(y)} (A) = 1  \Longleftrightarrow  (w(y) \circ f^{-1})(A) = 1   
\Longleftrightarrow y \in w^{-1} ((f^{-1}(A))^\sharp) = q(f^{-1}(A)) $ by parts \ref{ITqh2}, \ref{ITqh3}.
The proof for d-image transformations is similar, but uses \cite[Th. 6.7]{Butler:ReprDTM} instead of \cite[Th. 6.10]{Butler:ReprDTM}.  \\ 
\ref{ITqh4a}. As in proof of part \ref{ITqh4}, $ r_{w_y, f} =  \delta_{\theta(f)(y)}$. By \cite[Th. 6.7]{Butler:ReprDTM}  
$w_y(f^{-1}(A) \le r_{w_y, f}(A)$. Using \ref{ITqh2} and \ref{ITqh3},
$ q(f^{-1}(A)) = \{ y: w_y((f^{-1}(A))) = 1 \} \subseteq  \{ y: r_{w_y, f}(A) = 1 \} = \{ y: \theta(f)(y)  \in A \} = \theta(f) ^{-1} (A)$. \\
\ref{ITqh12}. Follows from \ref{ITqh2} or \ref{ITqh4}.\\
\ref{ITqh15}.
$\nu \in (q(E))^{\sharp} \iff \nu(q(E)) =1 \iff q^* \nu \in E^{\sharp} \iff \nu \in (q^*)^{-1} (E^{\sharp})$, so
$(q(E))^{\sharp} =(q^*)^{-1} (E^{\sharp})$. \\
\ref{ITqh5}.  
By Remark \ref{RemBRT}\ref{prt1} and part \ref{ITqh4} 
the distribution functions  $R_{1, q^*\nu, f}(t) $ and $R_{1, \nu, \theta(f)} (t)$  are equal 
(in case of deficient topological measures, they are equal for almost all $t$). By part  \ref{ITqh12} $(q^*\nu)(X) = \nu(Y)$. 
In formula (\ref{rfform}) we can use the same interval of integration for both integrals, so the integrals are equal.  \\
\ref{ITqh7}. Both $q^*$ and $\theta^*$ map $\mathbf{TM}(Y)$ to   $\mathbf{TM}(X)$ (or  $\mathbf{DTM}(Y)$ to   $\mathbf{DTM}(X)$). 
Let $ \nu$ be a (deficient) topological measure on $Y$. 
By (\ref{Intdq*})  and part \ref{ITqh5} $\int_X f\, dq^* \nu = \int_X f\, d\theta^* \nu$ for any $ f \in C_0^+(X)$.
By (\ref{EqByInt})  $q^* \nu = \theta^* \nu$. Thus,   $q^* = \theta^*$. 
\end{proof}

\begin{remark}
From part \ref{ITqh5} we see that $\nu_{\alpha} \Longrightarrow \nu$ implies $q^*(\nu_{\alpha}) \Longrightarrow q^*(\nu)$, so $q^*$ is continuous,
which we also proved in Theorem \ref{adjCont}.  
\end{remark}

\begin{remark} \label{conicqlfExt}
A conic quasi-homomorphism $\theta : C_0^+(X) \longrightarrow C_0^+(Y)$ corresponding to a d-image transformation $q$ 
can be extended to a conic quasi-homomorphism
$\theta : C_0(X) \longrightarrow C_b(Y)$  by letting 
$\theta(f)(y) =  \int_X f \, dw_y = \int_X f \, d(q^*\delta_y)$ for $f \in C_0(X)$ (see Theorem \ref{wEq} and Theorem \ref{ITw}).  
Then parts  \ref{qhMon}, \ref{ITqh1b}, \ref{ITqh4}, \ref{ITqh4a}, and \ref{ITqh5}  of Theorem \ref{ITqh} hold for function from $C_0(X)$
(see Remark \ref{RemBRT}\ref{RHOsvva}, Theorem \ref{INTsmoo}, and proof of Theorem \ref{ITqh}).
\end{remark}
 
\begin{example}
Let $\mu_0$ be a simple topological measure on a LC space $X$, let $Y$ be compact, and let $w(y) = \mu_0$ for all $y \in Y$, 
i.e. $q^* \delta_y = \mu_0$ for all $y$. 
By Theorem \ref{ITqh}\ref{ITqh1} $ \theta(f) \equiv \int f \, d\mu_0$, a constant function. 
\end{example}  

\begin{example} \label{qhOnXY}
Let $X,Y$ be LC, and let $ \mu$ be a  finite topological measure on $X$.  
For $ f \in C_c(X \times Y)$ let $\theta(f)(y) = \int_X f_y \, d\mu$, where $f_y \in C_c(X)$ is given by $f_y(x) = f(x,y)$. 
By \cite[Pr. 4.1]{Butler:RepeatedQint} $\theta(f) \in C_c(Y)$. 
It is easy to check that $\theta: C_c(X \times Y) \rightarrow C_c(Y)$ is a bounded quasi-linear map. 
By Theorem \ref{extfromCcOp} we can extend $\theta$ to a quasi-linear map on $C_0(X \times Y)$. 
If $\eta$ is a quasi-linear functional on $Y$ then $ \eta \circ \theta$ is a quasi-integral 
iff $\eta$ is linear or $ \mu$ is a nonnegative scalar multiple of a simple topological measure, see \cite[Th. 4.1]{Butler:RepeatedQint}. 
Now suppose $ \mu$ is a simple topological measure.
Applying Theorem \ref{rhosimple} to $ \int_X f_y \, d\mu$, we see that $ \theta$ is a quasi-homomorphism.   
For a corresponding  image transformation $q$ and an open $U \subseteq X \times Y$ by Theorem \ref{ITqh}\ref{ITqh2} and \cite[Th. 49]{Butler:Integration}
with  $U_y =\{x: (x,y) \in U\}$ we have:
\begin{align*}
&q(U) = \bigcup_{f \in C_c^+(X \times Y), f \le 1_U} \{ y \in Y: \int_X f_y \, d\mu > 0 \}  \\
& =  \bigcup_{f \in C_c^+(X \times Y),  f \le 1_U} \{ y \in Y: \mu( Coz (f_y))  = 1\} =  \{ y \in Y: \mu(U_y) = 1\}.
\end{align*}
\end{example} 

\begin{definition} \label{DefMarkovOp}
Let $X,Y$ be locally compact. 
An operator $S: DTM(Y) \longrightarrow DTM(X)$ is called a Markov operator if 
$ S(a \mu + b \nu) = a S(\mu) + b S(\nu)$ for $a,b \ge 0, \mu, \nu \in DTM(Y)$.
A Markov operator $S: DTM(Y) \longrightarrow DTM(X)$ is called Markov-Feller operator if there is an operator
$T: C_0(X) \longrightarrow C_0(Y)$ such that $ \int_X f \, d(S(\mu)) = \int_Y T(f) \, d\mu $ for $g \in C_0(X), \mu \in DTM(Y)$.  
The operator $T$ is called dual to $S$.
\end{definition}

\noindent
Definition \ref{DefMarkovOp} is closely related to \cite[p. 345]{LasotaMyjakSzarek}. 

\begin{example} \label{q*MarFel}
If $X = Y$ in Theorem  \ref{ITqh}, we see that the operator on finite (deficient) topological measures $q^*$ is a Markov-Feller operator, 
whose (nonlinear!) dual operator is $\theta$; see  Theorem  \ref{ITqh} \ref{ITqh5}, Remark \ref{conicqlfExt}, and \cite[p. 345]{LasotaMyjakSzarek}. 
\end{example}

\begin{lemma} \label{ITcompQH}
Suppose $X,Y, Z$ are LC, $q_1$ is a (d-) image transformation from $X$ to $Y$, 
and $q_2$ is a (d-) image transformation from $Y$ to $Z$, with 
corresponding (conic) quasi-homomorphisms $\theta_1$ and $\theta_2$. Then for the  (d-) image transformation $ q = q_2 \circ q_1$ the corresponding 
(conic) quasi-homomorphism $\theta$ has the form $ \theta =  \theta_2 \circ \theta_1$ (respectively,  $ \theta =  \theta_2 \circ \theta_1$  on $C_0^+(X)$ 
for conic quasi-homomorphisms).  
\end{lemma}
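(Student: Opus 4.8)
The plan is to combine the explicit integral formula attached to each quasi-homomorphism with the functoriality of adjoints under composition. First I would record the two objects in play. By the remark following Lemma~\ref{ITsvva}, the composition $q = q_2 \circ q_1$ is again a (d-) image transformation from $X$ to $Z$, so Theorem~\ref{ITqh} attaches to it a (conic) quasi-homomorphism $\theta$; and by Lemma~\ref{rhofg0mult} together with Lemma~\ref{QLMcompo}, the composition $\theta_2 \circ \theta_1$ is itself a (conic) quasi-homomorphism from $C_0(X)$ to $C_0(Z)$ (respectively from $C_0^+(X)$ to $C_0^+(Z)$). Thus both $\theta$ and $\theta_2 \circ \theta_1$ are well defined, and it suffices to check that they agree on every $f$ in $C_0(X)$ (resp.\ in $C_0^+(X)$) evaluated at every point $z \in Z$.

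Next I would fix such an $f$ and a point $z \in Z$ and run the following chain. Set $g = \theta_1(f)$, which lies in $C_0(Y)$ (resp.\ in $C_0^+(Y)$) because $\theta_1$ is a (conic) quasi-homomorphism with that codomain. Applying Theorem~\ref{ITqh}\ref{ITqh1} to the pair $(q_2,\theta_2)$ gives $\theta_2(g)(z) = \int_Y g \, d(q_2^*\delta_z)$. Writing $\nu = q_2^*\delta_z$, a simple (deficient) topological measure on $Y$, and applying Theorem~\ref{ITqh}\ref{ITqh5} to the pair $(q_1,\theta_1)$ yields $\int_Y \theta_1(f)\, d\nu = \int_X f \, d(q_1^*\nu)$. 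Now $q_1^*\nu = q_1^*(q_2^*\delta_z) = (q_2 \circ q_1)^*\delta_z = q^*\delta_z$, using the identity $(p\circ q)^* = q^*\circ p^*$ for (d-) image transformations from the remark that follows the construction of the adjoint. Finally Theorem~\ref{ITqh}\ref{ITqh1} applied to $(q,\theta)$ gives $\int_X f\, d(q^*\delta_z) = \theta(f)(z)$. Concatenating, $(\theta_2 \circ \theta_1)(f)(z) = \theta_2(g)(z) = \theta(f)(z)$, and since $f$ and $z$ are arbitrary this proves $\theta_2 \circ \theta_1 = \theta$ on $C_0(X)$ (resp.\ on $C_0^+(X)$).

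An equivalent route, which I would at least mention, is to argue purely through adjoints: each of $\theta,\theta_1,\theta_2$ satisfies $\theta(f)(z) = \int f\, d(\theta^*\delta_z)$ (and the analogous formulas for $\theta_1,\theta_2$) by Theorem~\ref{ITqh}\ref{ITqh1} and \ref{ITqh7}; moreover $(\theta_2\circ\theta_1)^* = \theta_1^*\circ\theta_2^*$ by (\ref{EqByInt}), since $\int_X f\, d(\theta_2\circ\theta_1)^*\nu = \int_Z \theta_2(\theta_1(f))\,d\nu = \int_Y \theta_1(f)\, d(\theta_2^*\nu) = \int_X f\, d(\theta_1^*(\theta_2^*\nu))$. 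Combining this with $\theta_i^* = q_i^*$ and $(q_2\circ q_1)^* = q_1^*\circ q_2^*$ again forces $(\theta_2\circ\theta_1)(f)(z) = \int_X f\, d(q^*\delta_z) = \theta(f)(z)$.

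I expect the only delicate point to be domain bookkeeping in the conic case: one must ensure $\theta_1(f) \in C_0^+(Y)$ so that $\theta_2$ may legitimately be applied to it, and that Theorem~\ref{ITqh}\ref{ITqh1} and \ref{ITqh5} are invoked only for nonnegative functions there, which is precisely why the statement restricts the conclusion to $C_0^+(X)$ for conic quasi-homomorphisms. Everything else is a routine concatenation of integral identities already established in Theorem~\ref{ITqh}, so no genuinely new estimate is needed.
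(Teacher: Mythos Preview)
Your proof is correct and follows essentially the same approach as the paper: both arguments fix $f$ and $z$, then chain Theorem~\ref{ITqh}\ref{ITqh1}, the identity $(q_2\circ q_1)^* = q_1^*\circ q_2^*$, and Theorem~\ref{ITqh}\ref{ITqh5} to pass from $\theta(f)(z)$ to $(\theta_2\circ\theta_1)(f)(z)$. The paper's proof is simply the terse version of your second paragraph, run in the opposite direction; your extra remarks about $\theta_2\circ\theta_1$ being a (conic) quasi-homomorphism and the domain bookkeeping in the conic case are correct but not strictly needed once the pointwise equality is established.
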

 
\begin{proof}
For   $ f \in C_0(X)$ (resp.,   $ f \in C_0^+(X)$) and $z \in Z$  using Theorem  \ref{ITqh}\ref{ITqh1},\ref{ITqh5} we have:
\begin{align*}
\theta(f)(z) &= \int_X f\, d((q_2 \circ q_1)^* \delta_z)  = \int_X f\, d(q_1^*( q_2^* \delta_z)) 
 =  \int_Y \theta_1(f) \, d(q_2^* \delta_z)) \\
 &= \theta_2 ( \theta_1(f))(z). 
\end{align*} 
Thus, $\theta(f) = (\theta_2 \circ \theta_1) (f)$ for all $ f \in C_0(X)$ (resp.,  for all $ f \in C_0^+(X)$). 
\end{proof}

\begin{remark}
Using Lemma \ref{ITcompQH} we see (as indicated in \cite{Taraldsen:RegQHinLC}) that the map $q \longmapsto \theta$ gives a functor from 
the category of image transformations as objects and compositions as morphisms to 
the category of quasi-homomorphisms as objects and compositions as morphisms.  
\end{remark} 
   
\begin{theorem} \label{q11eq}
Let $X$, $Y$ be LC.  Suppose $q$ is an image transformation from $X$ to $Y$ with the corresponding quasi-homomorphism $\theta: C_0(X) \longrightarrow C_0(Y)$
or  $q$ is a d-image transformation and  $\theta: C_0(X) \longrightarrow C_b(Y)$  is from Remark \ref{conicqlfExt}. TFAE:
\begin{enumerate}[leftmargin=0.25in, label=(\roman*),ref=(\roman*)]
\item \label{qxnon0}
$q(\{x\}) \neq \emptyset$ for any $ x \in X$.
\item \label{q1to1}
$q$ is 1-1.
\item \label{thetaISO}
$\| f - g \| \le \| \theta(f) - \theta(g) \|$ (hence, $\theta$ is an isometry on $C_c^+(X)$; on $C(X)$ if $X$ is compact). 
\item \label{theta11}
$ \theta$ is 1-1.
\end{enumerate}
\end{theorem}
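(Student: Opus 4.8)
The plan is to prove the cycle \ref{qxnon0}~$\Rightarrow$~\ref{q1to1}~$\Rightarrow$~\ref{thetaISO}~$\Rightarrow$~\ref{theta11}~$\Rightarrow$~\ref{qxnon0}; the equivalence of \ref{qxnon0} and \ref{q1to1} is already Lemma \ref{ITsvva}\ref{ITsv1dop}, and \ref{thetaISO}~$\Rightarrow$~\ref{theta11} is immediate, since $\theta(f)=\theta(g)$ forces $\|f-g\|\le\|\theta(f)-\theta(g)\|=0$. Throughout I write $w=q^*\circ i_Y\colon Y\to X^\sharp$ (resp. $Y\to X^\flat$) for the continuous $k$-proper map of Theorems \ref{ITw} and \ref{wEq}, so that $\theta(f)(y)=\int_X f\,dw_y$ for $f$ in the domain (for d-image transformations this holds on all of $C_0(X)$ by Remark \ref{conicqlfExt}), and I use $q(K)=w^{-1}(K^\sharp)$ (resp. $w^{-1}(K^\flat)$) for compact $K$, from Theorem \ref{ITw}.

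For \ref{q1to1}~$\Rightarrow$~\ref{thetaISO}, fix $x\in X$. Since $q$ is $1$-$1$, Lemma \ref{ITsvva}\ref{ITsv1dop} gives $q(\{x\})\neq\emptyset$, so choose $y\in q(\{x\})$; by Theorem \ref{adjCont}, $w_y=q^*\delta_y=\delta_x$, whence $\theta(h)(y)=\int_X h\,d\delta_x=h(x)$ for every $h$ in the domain. Therefore $|f(x)-g(x)|=|\theta(f)(y)-\theta(g)(y)|\le\|\theta(f)-\theta(g)\|$, and taking the supremum over $x\in X$ gives $\|f-g\|\le\|\theta(f)-\theta(g)\|$. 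For the parenthetical isometry claim, combine this with the reverse inequality $\|\theta(f)-\theta(g)\|\le\|f-g\|$, valid for $f,g\in C_c^+(X)$ and (when $X$ is compact) for $f,g\in C(X)$ by Theorem \ref{ITqh}\ref{ITqh6a}; in the d-image case it follows directly from $\bigl|\int_X f\,dw_y-\int_X g\,dw_y\bigr|\le w_y(X)\,\|f-g\|\le\|f-g\|$ of Remark \ref{RemBRT}\ref{RHOsvva}, since $w_y$ is simple.

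The substantive step is \ref{theta11}~$\Rightarrow$~\ref{qxnon0}, which I argue by contraposition. Suppose $q(\{x_0\})=\emptyset$ for some $x_0\in X$. Apply Lemma \ref{ITsvva}\ref{ITsv5} with $K=\{x_0\}$ and $W=\emptyset\in\mathscr{O}(Y)$: this yields an open set $U$ with $x_0\in U$, $\overline U$ compact, and $q(\overline U)\subseteq W=\emptyset$, i.e. $q(\overline U)=\emptyset$. Since $q(\overline U)=w^{-1}((\overline U)^\sharp)=\emptyset$ (resp. $w^{-1}((\overline U)^\flat)$), no $y\in Y$ satisfies $w_y(\overline U)=1$; as each $w_y$ is simple, $w_y(\overline U)=0$ for all $y$. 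Now pick any $f\in C_c^+(X)$ with $f\neq0$ and $supp\,f\subseteq U$ (possible since $U$ is a nonempty open subset of a locally compact Hausdorff space, e.g. by Lemma \ref{easyLeLC} and Urysohn's lemma). Then $Coz\,f\subseteq\overline U$, so $w_y(Coz\,f)=0$ by monotonicity, and hence $\theta(f)(y)=\int_X f\,dw_y=\int_{Coz\,f} f\,dw_y=0$ by \cite[Th. 49]{Butler:Integration}, for every $y\in Y$. Thus $\theta(f)=0=\theta(0)$ with $f\neq0$, so $\theta$ is not $1$-$1$, completing the cycle.

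The only step that is not routine is the use of Lemma \ref{ITsvva}\ref{ITsv5} with $W=\emptyset$: a bare point $x_0$ with $q(\{x_0\})=\emptyset$ gives, a priori, only $\bigcap\{q(U):x_0\in U,\ U\in\mathscr{O}(X)\}=\emptyset$, and one needs the outer-regularity property \ref{ITsv5} of $q$ (which in turn rests on local compactness) to enlarge this to a single open neighbourhood $U$ of $x_0$ whose closure — and therefore every $w_y$ — annihilates. Once that open set is in hand, non-injectivity of $\theta$ is witnessed by any bump function supported in it, and all remaining implications reduce to the point-mass identity $q^*\delta_y=\delta_x$ and the norm estimates already recorded in Theorem \ref{ITqh}.
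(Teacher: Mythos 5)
Your proof is correct and follows the same overall skeleton as the paper: the same cycle of implications, the same use of Lemma \ref{ITsvva}\ref{ITsv1dop} for (i)$\Leftrightarrow$(ii), the same point-mass identity $q^*\delta_y=\delta_x$ from Theorem \ref{adjCont} together with Theorem \ref{ITqh} for the isometry estimate, and the same basic idea for (iv)$\Rightarrow$(i) (shrink the point to an open neighbourhood annihilated by $q$, then test $\theta$ on a bump function supported there). The only real divergence is in how that last implication is finished. The paper re-runs the compactness/finite-intersection argument by hand (writing $q(\{x\})=\bigcap\{q(\overline U)\}$ and extracting finitely many $U_i$ with $q(\overline U_1)\cap\dots\cap q(\overline U_n)=\emptyset$), whereas you simply invoke Lemma \ref{ITsvva}\ref{ITsv5} with $W=\emptyset$ — which is legitimate, since $\emptyset\in\mathscr{O}(Y)$ and the lemma's proof covers this case. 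Then, instead of the paper's level-set identity $\theta(f)^{-1}(\{a\})=q(f^{-1}(\{a\}))$ from Theorem \ref{ITqh}\ref{ITqh4}, you conclude $\theta(f)=0$ from $w_y(Coz\,f)=0$ via \cite[Th. 49]{Butler:Integration}, exactly as in the proof of Theorem \ref{wEq}\ref{wEqIII}. Your finish is slightly cleaner and, in the d-image case, avoids a small delicacy in the paper's citation: for d-image transformations \ref{ITqh4} yields the level-set equality only for almost every level (and \ref{ITqh4a} gives only one inclusion), while your integration argument needs nothing beyond monotonicity of the simple deficient topological measure $w_y$ and works uniformly in both cases.
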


\begin{proof}
The equivalence of \ref{qxnon0} and \ref{q1to1} is a part of Lemma \ref{ITsvva}. 
\ref{qxnon0}  $\Longrightarrow$  \ref{thetaISO}. For $f, g \in C_0(X)$ let $x$ be such that $ \| f - g \| = | f(x) - g(x)|$. Let $y \in q(\{x\})$. 
By Theorem \ref{adjCont} $\delta_x = q^*(\delta_y)$, so by Theorem \ref{ITqh}\ref{ITqh5} and Remark \ref{conicqlfExt} we have: 
\begin{align*}
\| f - g \| &= | f(x) - g(x)| = | \int_X f \, d q^*(\delta_y) - \int_X g \, d q^*(\delta_y) | \\
&= | \int_Y \theta(f) \, d \delta_y - \int_Y \theta(g) \, d \delta_y |  = | \theta(f)(y) - \theta(g)(y) | \le \| \theta(f) - \theta(g) \|. 
\end{align*}
The statement about isometry follows from Theorem \ref{wEq}\ref{wEqII}. 
\ref{thetaISO}   $\Longrightarrow$ \ref{theta11} is clear.
\ref{theta11} $ \Longrightarrow$ \ref{qxnon0}. 
Suppose $q(\{x\} = \emptyset$.  Let $V$ be a neighborhood of $x$ with compact closure.
Using Lemma \ref{easyLeLC} and Lemma \ref{ITsvva} write 
$q(\{x\}) =
\bigcap \{ q(\overline{U}): x \in U  \subseteq \overline{U}  \subseteq V \} $. By compactness of $ q(\overline{V})$ there are 
neighborhoods $\  U_1, \ldots, U_n$  of $x$ such that $ q(\overline U_1)\cap \ldots \cap q( \overline U_n) = \emptyset$. Then  $q(U) = \emptyset$ 
for nonempty $U =U_1 \cap \ldots \cap U_n$. 
Choose $f \in C_c^+(X)$ with $f(x) = 1, \,  supp f \subseteq U$. Since $ \theta (f) \ge 0$ and 
by  assumption $ \theta(f)$ is nontrivial, 
$(\theta(f)^{-1}(\{a\}) \neq \emptyset$ for some $ a >0$. But by Theorem \ref{ITqh}\ref{ITqh4} and Remark \ref{conicqlfExt} 
$(\theta(f)^{-1}(\{a\}) = q(f^{-1} (\{a\}))\subseteq q(U) = \emptyset$, a contradiction. 
\end{proof}
   
\begin{theorem} \label{9eqThet}
Let $X$, $Y$ be LC.  Suppose $q$ is an image transformation from $X$ to $Y$ with the corresponding function $w$ and quasi-homomorphism $\theta: C_0(X) \longrightarrow C_0(Y)$ from Theorem \ref{ITqh}
or  $q$ is a d-image transformation with the corresponding $w$ and conic  quasi-homomorphism 
$\theta: C_0(X) \longrightarrow C_b(Y)$  from Remark \ref{conicqlfExt}. TFAE:
\begin{enumerate}[label=(\roman*),ref=(\roman*)]
\item \label{thet0}
Each $w_y$ is a measure.  
\item \label{thet1}
$\theta$ is linear.
\item \label{thet2}
$\theta$ is an algebra homomorphism.
\item \label{thet3}
$\theta^*(P_e(Y)) = q^*(P_e(Y))  \subseteq P_e(X)$.
\item \label{thet4}
$Y = \bigcup_{x \in X} q(\{x \})$.
\item \label{thet5}
There is a continuous proper function $u:Y \rightarrow X$ such that $ q = u^{-1}$.
\item \label{thet6}
$q(A \cup B) = q(A) \cup q(B)$ whenever $A, B, A \cup B \in \mathscr{O}(X) \cup \mathscr{K}(X)$.
\item \label{thet7}
$q(C \cup K) = q(C) \cup q(K)$ whenever $C,K \in \mathscr{K}(X)$.
\item \label{thet8}
$q(U \cup V) \subseteq q(U) \cup q(V)$, hence, $q(U \cup V) = q(U) \cup q(V)$  whenever $U, V \in \mathscr{O}(X) $.
\end{enumerate}
\end{theorem}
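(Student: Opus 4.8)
The plan is to run a single cycle of implications $\ref{thet0}\Rightarrow\ref{thet1}\Rightarrow\ref{thet2}\Rightarrow\ref{thet3}\Rightarrow\ref{thet4}\Rightarrow\ref{thet5}\Rightarrow\ref{thet6}\Rightarrow\ref{thet7}\Rightarrow\ref{thet0}$ and then graft on \ref{thet8} via $\ref{thet6}\Rightarrow\ref{thet8}\Rightarrow\ref{thet0}$. Throughout I use Theorem \ref{ITqh}: $\theta(f)(y)=\int_X f\,dw_y$ with $w_y=q^*\delta_y$ (part \ref{ITqh1}), $\theta^*=q^*$ (part \ref{ITqh7}), and $q(X)=Y$ (part \ref{ITqh12}); the last of these forces $w_y\in X^\sharp$ (resp.\ $X^\flat$) with $w_y(X)=1$ for every $y\in Y$. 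In the d-image case I take $\theta\colon C_0(X)\to C_b(Y)$ as in Remark \ref{conicqlfExt}, and the argument is otherwise verbatim (reading $X^\flat$, deficient topological measures, and the open-set clause of Theorem \ref{subaddit}).

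For $\ref{thet0}\Rightarrow\ref{thet1}$: if each $w_y$ is a measure, then $\int_X f\,dw_y$ coincides with the ordinary Lebesgue integral against the Borel extension of $w_y$ (compare formula (\ref{rfformp}) and Remark \ref{RemBRT}), hence is additive in $f$ on $C_0^+(X)$; extending by $\theta(f)=\theta(f^+)-\theta(f^-)$ (Lemma \ref{ThetElPro}\ref{qlmADD}) shows $\theta$ is linear. For $\ref{thet1}\Rightarrow\ref{thet2}$: since $\theta$ is a (conic) quasi-homomorphism one has $\theta(f^2)=(\theta(f))^2$ for every $f\in C_0(X)$ (Theorem \ref{rhosimple} applied to the fiber functionals), and a linear map with this property is multiplicative by the polarization argument in Remark \ref{ThetMul}, so $\theta$ is an algebra homomorphism. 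For $\ref{thet2}\Rightarrow\ref{thet3}$: fix $y$; the functional $\rho_y(f)=\theta(f)(y)=\int_X f\,dw_y$ is then positive, linear and multiplicative, and it is nonzero because $w_y(X)=1$ yields a compact $K$ with $w_y(K)=1$, so $\rho_y(g)\ge w_y(K)=1$ for any $g\in C_c^+(X)$ with $g\ge 1_K$ (Remark \ref{RemBRT}\ref{mrDTM}). A nonzero character of $C_0(X)$ is evaluation at a point $x_0\in X$, so $\rho_y(f)=\int_X f\,d\delta_{x_0}$ and thus $w_y=\delta_{x_0}$ by (\ref{EqByInt}); hence $q^*(P_e(Y))=\{w_y:y\in Y\}\subseteq P_e(X)$, and $\theta^*=q^*$.

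The block $\ref{thet3}\Rightarrow\ref{thet4}\Rightarrow\ref{thet5}$ is exactly Theorem \ref{ITfromUf}, once we recall $\theta^*=q^*$. For $\ref{thet5}\Rightarrow\ref{thet6}$: $q(A\cup B)=u^{-1}(A\cup B)=u^{-1}(A)\cup u^{-1}(B)=q(A)\cup q(B)$. For $\ref{thet6}\Rightarrow\ref{thet7}$ and $\ref{thet6}\Rightarrow\ref{thet8}$ one simply specializes \ref{thet6} to compact $C,K$ (note $C\cup K$ is again compact, hence in the domain of \ref{thet6}) and to open $U,V$; in \ref{thet8} the reverse inclusion $q(U)\cup q(V)\subseteq q(U\cup V)$ is automatic from monotonicity of $q$ (Lemma \ref{ITsvva}\ref{ITsv1}). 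Finally, for $\ref{thet7}\Rightarrow\ref{thet0}$ and $\ref{thet8}\Rightarrow\ref{thet0}$: for compact $C,K$ one has $w_y(C\cup K)=\delta_y(q(C\cup K))=\delta_y(q(C)\cup q(K))\le\delta_y(q(C))+\delta_y(q(K))=w_y(C)+w_y(K)$, and likewise for open $U,V$, so $w_y$ is subadditive on compact (resp.\ open) sets and therefore a measure by Theorem \ref{subaddit}. This closes the cycle and attaches \ref{thet8}.

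I expect no genuine obstacle: the statement is an assembly of results already in hand. The one step needing a little care is $\ref{thet2}\Rightarrow\ref{thet3}$, where one must check the fiber functionals $\rho_y$ are nonzero (so the identification of nonzero characters of $C_0(X)$ with points of $X$ applies) and then transport the conclusion back to $w_y$ through the bijection of Remark \ref{RemBRT}; everything else reduces to monotonicity of $q$, Theorem \ref{ITfromUf}, and Theorem \ref{subaddit}.
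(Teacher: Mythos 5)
Your proposal is correct and takes essentially the same route as the paper: the same chain (i)$\Rightarrow$(ii)$\Rightarrow$(iii)$\Rightarrow$(iv) via Remark \ref{ThetMul} and the identification of nonzero characters of $C_0(X)$ with point evaluations, Theorem \ref{ITfromUf} for the block (iv)--(vi), and Theorem \ref{subaddit} to return to (i), the only cosmetic difference being that you close the cycle at (viii)$\Rightarrow$(i) (subadditivity on compact sets) and get (ix) by specializing (vii), while the paper goes (viii)$\Rightarrow$(ix)$\Rightarrow$(i) by decomposing a compact subset of $U\cup V$. Your explicit check that each fiber functional is nonzero (via $w_y(X)=\delta_y(q(X))=1$) is a detail the paper leaves implicit, but it is not a different argument.
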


\begin{proof}
\ref{thet0} $\Longrightarrow$ \ref{thet1} is clear because $\theta(f)(y) = \int_X f\, dw_y$.  
\ref{thet1} $\Longrightarrow$ \ref{thet2} follows from Remark \ref{ThetMul}.
\ref{thet2} $\Longrightarrow$ \ref{thet3}. 
For $y \in Y$ the functional $ \rho$ given by $ \rho(f) = \theta(f)(y)$ is linear and multiplicative, so $ \rho(f) = \int_X f\, d \delta_x$ for some point mass
$\delta_x$. By Theorem \ref{ITqh}\ref{ITqh1} $\rho(f) =  \int_X f \, d(q^*\delta_y) $ for all $ f \in C_0^+(X)$. 
By (\ref{EqByInt}) and Theorem \ref{ITqh}\ref{ITqh7} $\theta^* \delta_y =q^*\delta_y = \delta_x $.    
\ref{thet3} $\Longleftrightarrow$ \ref{thet4} $\Longleftrightarrow$  \ref{thet5}. See Theorem \ref{ITfromUf}. 
\ref{thet5} $\Longrightarrow$ \ref{thet6}  $\Longrightarrow$ \ref{thet7} is clear.
\ref{thet7} $\Longrightarrow$ \ref{thet8} follows from Definition \ref{IT} since a compact $K \subseteq U \cup V$ can be written as
$K = C \cup D, C  \subseteq U, D \subseteq V, C,D  \in \mathscr{K}(X)$.  
\ref{thet8} $\Longrightarrow$ \ref{thet0}.  Let $y \in Y$, and $U,V \in \mathscr{O}(X)$. For the (deficient) topological measure $w_y$ we have
$w_y(U \cup V) = (q^* \circ \delta_y) (U \cup V) = \delta_y (q(U) \cup q(V)) \le w_y(U) + w_y(V)$, 
and by Theorem \ref{subaddit} $w_y$ is a measure. 
\end{proof} 

\begin{corollary} \label{ontoBij}
Suppose $X$, $Y$ are LC, (a)  a quasi-homomorphism $\theta: C_0(X) \longrightarrow C_0(Y)$ is onto or 
(b) a conic quasi-homomorphism  $\theta': C_0^+(X) \longrightarrow C_0^+(Y)$ is onto, 
and $ \theta: C_0(X) \longrightarrow C_b(Y)$ is an extension of $\theta'$ from Remark \ref{conicqlfExt}.
Then $\theta: C_0(X) \longrightarrow C_0(Y)$ is an algebra isomorphism. 
\end{corollary}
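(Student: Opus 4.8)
The plan is to combine Theorems \ref{q11eq} and \ref{9eqThet}: surjectivity of $\theta$ should force the image transformation $q$ attached to $\theta$ both to be $1$-$1$ and to be ``linear'' in the sense of Theorem \ref{9eqThet}, and then the resulting proper continuous $u\colon Y\to X$ with $q=u^{-1}$ will be a homeomorphism and $\theta=f\mapsto f\circ u$ an algebra isomorphism. First I would fix notation: by Theorems \ref{wEq} and \ref{ITw} there is a continuous $k$-proper $w\colon Y\to X^{\sharp}$ (in case (b), $w\colon Y\to X^{\flat}$) with $\theta(f)(y)=\int_X f\,dw_y$ and with $q=w^{-1}\circ\Lambda$, $w=q^{*}\circ i_Y$ (Theorem \ref{ITqh}), writing $w_y$ for $w(y)$. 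In case (b) I would first replace $\theta'$ by the extension $\theta\colon C_0(X)\to C_b(Y)$ of Remark \ref{conicqlfExt}, for which the same integral formula holds on all of $C_0(X)$; the two cases then proceed in parallel.

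The main step is to show $\theta$ is linear, for which it suffices by Theorem \ref{9eqThet} to prove that each $w_y$ is a measure --- equivalently, since $w_y$ is a simple topological measure, that it is a point mass (Theorem \ref{subaddit}, Remark \ref{dtmisptm}). Suppose not: for some $y_0$, $w_{y_0}$ is not subadditive on open sets, so there are open $U,V\subseteq X$ with $w_{y_0}(U)=w_{y_0}(V)=0$ but $w_{y_0}(U\cup V)=1$. Taking a compact $C\subseteq U\cup V$ with $w_{y_0}(C)=1$ and a partition of unity $g_1,g_2\in C_c^{+}(X)$ subordinate to $\{U,V\}$ with $g_1+g_2\equiv1$ on $C$, one gets $g:=g_1+g_2$ with $1_C\le g\le 1_{U\cup V}$, so $\theta(g)(y_0)=\int_X g\,dw_{y_0}=1$ (Remark \ref{RemBRT}, \cite[Th. 49]{Butler:Integration}), while $\theta(g_1)(y_0)=\theta(g_2)(y_0)=0$ since $\int_X g_i\,dw_{y_0}\le w_{y_0}(Coz\,g_i)=0$. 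One must now use $\theta(C_0(X))=C_0(Y)$ to produce a contradiction --- the point being that $\theta(f)(y_0)$ depends on $f$ only through the non-additive way $w_{y_0}$ weighs $U$ against $V$, which should be incompatible with $\theta$ being onto (one would construct, in a neighbourhood of $y_0$, a continuous function on $Y$ with no $\theta$-preimage). \emph{This implication is the main obstacle}; granting it, Theorem \ref{9eqThet} yields linearity of $\theta$, then $\theta(f^2)=(\theta(f))^2$ together with Remark \ref{ThetMul} gives that $\theta$ is an algebra homomorphism, and Theorem \ref{9eqThet}\ref{thet5} (or Theorem \ref{ITfromUf}) produces a proper continuous $u\colon Y\to X$ with $q=u^{-1}$ and $\theta(f)=f\circ u$.

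Finally I would argue bijectivity. Surjectivity of $\theta$ is the hypothesis, and it forces $u$ to be injective, since distinct points of $Y$ must be separated by some $\theta(f)=f\circ u$, whereas $\theta(C_0(X))=C_0(Y)$ separates points of $Y$. The injectivity of $\theta$ is handled by Theorem \ref{q11eq}: $\theta$ is $1$-$1$ iff $q(\{x\})\neq\emptyset$ for every $x\in X$, i.e.\ iff $u$ is onto; this too I would extract from surjectivity of $\theta$. With $u$ a proper continuous bijection of $Y$ onto the locally compact Hausdorff space $X$, $u$ is a homeomorphism, so $\theta\colon C_0(X)\to C_0(Y)$, $f\mapsto f\circ u$, is an algebra isomorphism; in case (b) the same conclusion holds for the extension $\theta$ constructed at the outset. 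I therefore expect essentially all of the work to lie in turning surjectivity of $\theta$ into the subadditivity of the measures $w_y$ (and, similarly, into $q$ being $1$-$1$), the remaining steps being assembly on top of Theorems \ref{q11eq}, \ref{9eqThet} and \ref{ITfromUf}.
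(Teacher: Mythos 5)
Your skeleton agrees with the paper's at the top level: both reduce the corollary to showing that each $w_y=q^*\delta_y$ is a point mass (i.e.\ $q^*(P_e(Y))\subseteq P_e(X)$) and then invoke Theorem \ref{9eqThet} for linearity and the homomorphism property, with Theorem \ref{q11eq}/Theorem \ref{ITfromUf} supplying the rest. But the two implications you leave open --- (i) surjectivity of $\theta$ forces each $w_y$ to be a point mass, and (ii) surjectivity forces $q(\{x\})\neq\emptyset$ for every $x$ (equivalently $u$ onto, $\theta$ injective) --- are the entire content of the corollary, and your proposal supplies no mechanism for either; you flag (i) yourself as the main obstacle. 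The paper never argues locally at a single $y_0$. It uses surjectivity to obtain an inverse (conic) quasi-homomorphism $\xi:C_0(Y)\longrightarrow C_0(X)$ (via Theorem \ref{qhEqDef}), passes to adjoints, and from $\int_X f\,d\mu=\int_X \xi(\theta(f))\,d\mu=\int_Y\theta(f)\,d(\xi^*\mu)=\int_X f\,d(\theta^*\xi^*\mu)$ together with (\ref{EqByInt}) concludes $\theta^*\xi^*=\xi^*\theta^*=id$ on the space of finite (deficient) topological measures; then, with $p$ the (d-) image transformation of $\xi$, the computation $(q^*\delta_y)(p(\{y\}))=(\xi^*\theta^*\delta_y)(\{y\})=1$ produces $x\in p(\{y\})$ with $p^*\delta_x=\delta_y$, whence $q^*\delta_y=q^*p^*\delta_x=\theta^*\xi^*\delta_x=\delta_x$. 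This single adjoint computation delivers your (i), and the injectivity of $\theta$ is carried by the existence of $\xi$ rather than extracted from surjectivity by a separate argument.

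Concerning whether your gaps could be closed along the lines you sketch: for (i), non-subadditivity of $w_{y_0}$ is a pointwise datum while surjectivity is a global property of $\theta$, and nothing in your partition-of-unity setup (the values $\theta(g_1)(y_0)=\theta(g_2)(y_0)=0$, $\theta(g_1+g_2)(y_0)=1$) by itself obstructs the existence of $\theta$-preimages for functions concentrated near $y_0$, so the promised contradiction is not in sight. Claim (ii) is worse: it is not a formal consequence of surjectivity alone. The restriction map $C([0,2])\longrightarrow C([0,1])$, $f\mapsto f|_{[0,1]}$, is a positive, linear, multiplicative (hence quasi-homomorphic) surjection by the Tietze theorem; its measures $w_y$ are all point masses, yet the associated $u$ is the non-surjective inclusion and the map is not injective. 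So surjectivity of $\theta$ cannot, on its own, yield the ``$u$ onto'' step of your plan; a complete argument has to exploit the inverse-map structure exactly as the paper's proof does, which is where both the point-mass property and the injectivity actually enter.
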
 

\begin{proof}
First we shall show that $q^*(P_e(Y) \subseteq P_e(X)$. 
$\theta$ (resp., $ \theta'$) is onto, and  by Theorem \ref{qhEqDef}
there is an inverse map
$\xi : C_0(Y) \longrightarrow C_0(X)$ which is a quasi-homomorphism (resp. $\xi : C_0^+(Y) \longrightarrow C_0^+(X)$, a conic quasi-homomorphism).
For a (deficient) topological measure $\mu$ on $X$ for each $f \in C_0^+(X)$  by Theorem \ref{ITqh}\ref{ITqh5},\ref{ITqh7}
\begin{align*}
\int_X f \, d\mu = \int_X \xi( \theta(f)) \, d\mu = \int_Y \theta(f) \, d(\xi^* \mu) = \int_X f\, d(\theta^* \xi^* \mu),
\end{align*} 
so by (\ref{EqByInt}) $ \mu  = \theta^* \xi^* \mu$. Thus, $\theta^* \xi^* = id$. Similarly, $ \xi^*  \theta^*= id$.

Let $ y \in Y$, and let $ \sigma = q^* \delta_y$. For the (d-) image transformations $p$ and $q$ corresponding to $ \xi$ and $\theta$  respectively, 
by Theorem \ref{ITqh}\ref{ITqh7} 
$ \  \sigma( p( \{y\})) = p^*\sigma(\{y\}) = (p^* q^* \delta_y)(\{y\}) = (\xi^* \theta^* \delta_y)  (\{y\}) = \delta_y (\{y\}) = 1,$
so $ p( \{y\}) \neq \emptyset$. For $x \in  p( \{y\})$ by Theorem \ref{adjCont} we have $\delta_y = p^* \delta_x$, and then 
$ q^* \delta_y =  q^*  p^* \delta_x = \theta^* \xi^*  \delta_x = \delta_x \in P_e(X)$.

Since $q^*(P_e(Y) \subseteq P_e(X)$, by Theorem \ref{9eqThet} $\theta$ is linear, so $\theta(f) = \theta'(f^+) - \theta'(f^-) \in C_0(Y)$. 
Then $\theta: C_0(X) \longrightarrow C_0(Y)$ is an algebra isomorphism by Theorem \ref{9eqThet}.
\end{proof}

\begin{remark} \label{PsiLam}
Let $ q$ be the (d-) image transformation corresponding to the  (conic) quasi-homomorphism $\Psi$ from Proposition \ref{Psifhat}. 
For each compact $K$ by Theorem \ref{ITqh}\ref{ITqh3} and Remark \ref{RemBRT}\ref{mrDTM}
we see that  $q(K) = \{ \mu: \tilde f(\mu) = 1 \mbox{   for each   } f \in C_c(X), 1_K \le f \le 1\} = K^\sharp$ (respectively, $K^{\flat}$).
Thus, $q = \Lambda$, the basic (d-) image transformation. We call $\Psi$ the basic (conic) quasi-homomorphism.
\end{remark}

\begin{theorem}  \label{LambCont}
Suppose $X$ is LC,   $\Lambda$ is the basic (d-) image transformation, and 
$\Psi: C_0(X) \longrightarrow C(X^{\sharp})$  ($\Psi: C_0(X) \longrightarrow C(X^{\flat})$)  is the basic (conic) quasi-homomorphism.
Consider the following statements.
\begin{enumerate}[leftmargin=0.25in, label=(\Roman*),ref=(\Roman*)]
\item \label{laba1}
 $\Lambda$ is an inverse of a continuous proper function $u: X^\sharp \Longrightarrow X$ (resp., $u: X^\flat \Longrightarrow X$).
\item \label{laba2}
 $P_e(X) = X^\sharp$ (respectively,  $P_e(X) = X^{\flat}$).
\item \label{laba3} 
Each representative (deficient) topological measure on $X$ is a Borel measure.
\item \label{laba4}
Each simple  (deficient) topological measure  is subadditive on $\mathscr{O}(X) \cup \mathscr{K}(X)$.
\item \label{laba5}
Each simple  (deficient) topological measure  is subadditive on $\mathscr{O}(X)$.
\item \label{laba6}
Each simple  (deficient) topological measure  is subadditive on $\mathscr{K}(X)$.
\item \label{laba7}
$(A \cup B)^\sharp = A^\sharp \cup B^\sharp$ (resp., $(A \cup B)^{\flat} = A^{\flat} \cup B^{\flat}$) for $A, B, A \cup B \in \mathscr{O}(X) \cup \mathscr{K}(X)$.
\item \label{laba8}
$(K \cup C)^\sharp = K^\sharp \cup C^\sharp$ (resp., $(K \cup C)^{\flat} = K^{\flat} \cup C^{\flat}$)  for $K,C \in \mathscr{K}(X)$. 
\item \label{laba9}
$(U \cup V)^\sharp = U^\sharp \cup V^\sharp$  (resp., $ (U \cup V)^{\flat} = U^{\flat} \cup V^{\flat}$)  for $U, V \in \mathscr{O}(X)$. 
\item \label{Ps4u}
Each  representable quasi-integral on $X$ is linear.
\item \label{Ps5u}
$\Psi$ is linear.
\item \label{Ps2u} 
$\Psi$ is an algebra  isomorphism. 
\item \label{Ps1u}
$ \Psi$ is onto.
\item \label{PsOponto}
If $W$ is open in  $X^{\flat}$ (resp., in $X^\sharp$ ) then $W = U^{\flat} $  (resp., $ W = U^\sharp$)  for some $U \in \mathscr{O}(X)$. 
\item \label{Lambonto}
$\Lambda$ is onto.
\end{enumerate} 
Statements \ref{laba1} - \ref{Ps2u} are equivalent and imply  \ref{PsOponto}. For the basic quasi-homomorphism $ \Psi$ statements  
\ref{laba1} - \ref{Ps1u}  are equivalent and imply  \ref{PsOponto}, 
and if $X$ is compact then 
\ref{laba1} -\ref{Lambonto}  are equivalent. 
\end{theorem}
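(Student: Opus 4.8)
The strategy is to build a cycle of implications through the list \ref{laba1}–\ref{Ps2u}, then bridge to the $\Psi$-specific statements \ref{Ps1u}, and finally treat \ref{PsOponto} and (in the compact case) \ref{Lambonto} separately. Throughout, I would exploit the identification from Remark \ref{PsiLam} that the basic (conic) quasi-homomorphism $\Psi$ corresponds precisely to the basic (d-) image transformation $\Lambda$, together with the correspondence $w_\mu = \delta_{(\cdot)} $? — more precisely, the function $w : X^\sharp \longrightarrow X^\sharp$ (resp. $w: X^\flat \to X^\flat$) attached to $\Lambda$ by Theorem \ref{ITqh}\ref{ITqh1a} is $w = \Lambda^* \circ i_{X^\sharp}$, and one checks $w_\mu = \mu$ for every simple (deficient) topological measure $\mu$ (since $(\Lambda^*\delta_\mu)(A) = \delta_\mu(\Lambda(A)) = \delta_\mu(A^\sharp) = \mathbf 1_{\mu(A)=1}$, so $\Lambda^*\delta_\mu = \mu$). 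With this, Theorem \ref{9eqThet} applied to $q = \Lambda$, $\theta = \Psi$ already delivers the equivalence of \ref{laba1} (= \ref{thet5}), the measure condition \ref{thet0} on each $w_\mu = \mu$ — which is exactly \ref{laba3} read through ``each simple (d-) topological measure is a measure'', and since representable means closed convex hull of simple ones and the extension is linear this is \ref{laba3} — linearity \ref{Ps5u} (= \ref{thet1}), the algebra-homomorphism property (= \ref{thet2}), the identity \ref{laba2} via \ref{thet3}–\ref{thet4} and Theorem \ref{ITfromUf}, and the three ``additivity on sets'' statements \ref{laba7},\ref{laba8},\ref{laba9} (= \ref{thet6},\ref{thet7},\ref{thet8}) since $q(E) = w^{-1}(E^\sharp) = \{\mu : \mu(E)=1\} = E^\sharp$. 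So the bulk of \ref{laba1}–\ref{laba3},\ref{laba7}–\ref{laba9},\ref{Ps5u},\ref{Ps2u} is a direct citation of Theorem \ref{9eqThet}. The remaining new content is: (i) relating the subadditivity statements \ref{laba4},\ref{laba5},\ref{laba6} to these; (ii) relating \ref{Ps4u} (representable quasi-integrals are linear) to \ref{Ps5u}; (iii) the equivalence of \ref{Ps1u} ($\Psi$ onto) for the basic map; (iv) \ref{PsOponto}; and (v) the compact-case equivalences with \ref{Lambonto}.

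For the subadditivity block: \ref{laba4} $\Leftrightarrow$ \ref{laba5} $\Leftrightarrow$ \ref{laba6} follows from Theorem \ref{subaddit}, which says subadditivity on compact sets, subadditivity on open sets, and being a measure are all equivalent for a single deficient topological measure; applying it uniformly to all simple (d-) topological measures gives the three-way equivalence, and it identifies \ref{laba4} with \ref{thet0} (each $w_\mu = \mu$ a measure), hence with the already-closed cycle. For \ref{Ps4u} $\Leftrightarrow$ \ref{Ps5u}: if $\Psi$ is linear then every representable quasi-integral $\int(\cdot)\,d\mu$ with $\mu$ representable is linear because $\mu$ is a Borel measure by \ref{laba3}; conversely, if every representable quasi-integral is linear then in particular every simple quasi-integral is linear (simple measures are representable), so every simple $\mu$ is a measure, which is \ref{laba4}, closing back. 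For \ref{Ps1u}: $\Psi$ onto is Corollary \ref{ontoBij} applied with $\theta = \Psi$, $\xi = \Psi$? — no; rather, if $\Psi : C_0(X) \to C_0(X^\sharp)$ (the case where $\tilde f$ is defined on $X^\sharp$, which is precisely when $X$ is compact, by Proposition \ref{Psifhat}) is onto, Corollary \ref{ontoBij} forces $\Psi$ to be an algebra isomorphism, giving \ref{Ps2u}; conversely an algebra isomorphism is onto. This is why \ref{Ps1u} enters the equivalence only for $\Psi$ and, strictly, in the compact setting where $\Psi(C_0(X)) \subseteq C_0(X^\sharp)$.

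For \ref{PsOponto}: assuming the common equivalent conditions, $P_e(X) = X^\sharp$ (resp. $X^\flat$) by \ref{laba2} and, by \ref{laba1}, $\Lambda = u^{-1}$ for a continuous proper $u : X^\sharp \to X$ which is then the homeomorphism $i_X^{-1}$; hence an open $W \subseteq X^\sharp$ is $u^{-1}(U)$ for $U = u(W)$ open in $X$, i.e. $W = \Lambda(U) = U^\sharp$. For the compact case, to close \ref{Lambonto} into the cycle: if $\Lambda$ is onto then every simple topological measure $\mu$ on $X$ is $\Lambda(E) = E^\sharp$-valued in a way that forces $\{\mu\}$-type basic sets to come from sets in $X$; more usefully, $\Lambda$ onto means every open and every compact subset of $X^\sharp$ is in the range $\{A^\sharp : A \in \mathscr O(X)\cup\mathscr K(X)\}$, and by Theorem \ref{ITqh}\ref{ITqh12} (applied to $q = \Lambda$, giving $\Lambda(X) = X^\sharp$) combined with surjectivity onto singletons' traces one extracts that $q(\{x\}) \neq \emptyset$ for each simple $\mu$, leading via Theorem \ref{q11eq} and a counting/separation argument on $X^\sharp$ to $P_e(X) = X^\sharp$, i.e. \ref{laba2}; the compactness of $X$ is used to ensure $X^\sharp$ is compact Hausdorff (Lemma \ref{XdiezCom}) so that the relevant sets are well-behaved and $\Psi$ maps into $C_0(X^\sharp) = C(X^\sharp)$.

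The main obstacle I anticipate is the last step: showing in the compact case that $\Lambda$ being onto (as a map of set systems $\mathscr O(X)\cup\mathscr K(X) \to \mathscr O(X^\sharp)\cup\mathscr K(X^\sharp)$) forces $P_e(X) = X^\sharp$. The subtlety is that surjectivity of $\Lambda$ onto open sets of $X^\sharp$ is a statement purely about the topology of $X^\sharp$ and must be converted into a statement about which simple topological measures exist; the bridge is that if some $\mu_0 \in X^\sharp \setminus P_e(X)$ existed, one would need to produce an open set $W \ni \mu_0$ in $X^\sharp$ not of the form $U^\sharp$, using the failure of subadditivity of $\mu_0$ to separate it — essentially running the contrapositive of \ref{laba9} $\Rightarrow$ \ref{laba2} at the level of the hyperspace. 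I would handle this by choosing $U, V$ open in $X$ witnessing $\mu_0(U \cup V) = 1 > 0 = \mu_0(U) = \mu_0(V)$ when $\mu_0$ is not subadditive, and showing $(U\cup V)^\sharp \supsetneq U^\sharp \cup V^\sharp$, so the basic open set $(U\cup V)^\sharp$ in $X^\sharp$ is not attainable as $W^\sharp$ for the required $W$ — contradicting that $\Lambda$ onto should realize it; the care needed is in verifying this genuinely obstructs surjectivity rather than merely non-injectivity, which is where Theorem \ref{ITqh}\ref{ITqh2}–\ref{ITqh3}'s description of $q(U), q(K)$ as unions/intersections of level sets of $\theta(f)$ and the completeness/closedness of $\Psi(C_0(X))$ from Proposition \ref{Psifhat} are the tools I would lean on.
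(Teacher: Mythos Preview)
Your approach is essentially the same as the paper's: identify $w_\mu=\mu$ for the basic (d-)image transformation, then read off the bulk of the equivalences \ref{laba1}, \ref{laba2}, \ref{laba7}--\ref{laba9}, \ref{Ps5u}, \ref{Ps2u} directly from Theorem~\ref{9eqThet}; handle the subadditivity block \ref{laba4}--\ref{laba6} and its link to \ref{laba3} via Theorem~\ref{subaddit}; and close \ref{Ps4u} into the cycle through linearity of simple quasi-integrals. The paper does exactly this, with the same citations. Two points deserve comment.

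First, a minor confusion that does not damage your argument: you say \ref{Ps1u} ``enters the equivalence \ldots\ strictly in the compact setting where $\Psi(C_0(X))\subseteq C_0(X^\sharp)$''. In fact $X^\sharp$ is \emph{always} compact (Lemma~\ref{XdiezCom}), so $C(X^\sharp)=C_0(X^\sharp)$ regardless of whether $X$ is compact, and Corollary~\ref{ontoBij} applies to give \ref{Ps1u}$\Rightarrow$\ref{Ps2u} for any LC $X$. The paper additionally supplies the forward direction \ref{Ps5u}$\Rightarrow$\ref{Ps1u} by a Stone--Weierstrass argument (using that $\Psi(C_0(X))$ is closed, contains squares hence is an algebra, separates points, and vanishes nowhere), which you do not mention but which is needed to close the cycle without going through \ref{Ps2u}$\Rightarrow$\ref{Ps1u} trivially.

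Second, and this is the genuine gap you yourself flag: your sketch for \ref{Lambonto}$\Rightarrow$\ref{laba2} in the compact case does not work. Exhibiting $(U\cup V)^\sharp\supsetneq U^\sharp\cup V^\sharp$ when some $\mu_0$ fails subadditivity shows $\Lambda$ is not additive on unions, but says nothing about whether $\Lambda$ is onto: the set $(U\cup V)^\sharp$ \emph{is} in the range of $\Lambda$ (it equals $\Lambda(U\cup V)$), so no contradiction with surjectivity arises. Your proposed tool (Theorem~\ref{ITqh}\ref{ITqh2}--\ref{ITqh3} and closedness of $\Psi(C_0(X))$) does not obviously convert this into an obstruction to surjectivity either. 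The paper does not prove this implication internally; it simply cites \cite[Pr.~5.1]{Aarnes:Pure}. If you want a self-contained argument you will need a different idea---for instance, producing a specific open (or closed) subset of $X^\sharp$ that provably cannot equal any $A^\sharp$, which requires more than the mere existence of a non-point-mass simple topological measure.
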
  

\begin{proof}
The proof of equivalence of \ref{laba1} - \ref{Ps2u}  is similar for topological measures and deficient topological measures.
 \ref{laba1}  $\Longleftrightarrow$ \ref{laba2} is in Theorem \ref{ITfromUf}.
\ref{laba2} $\Longrightarrow$ \ref{laba3}  $\Longrightarrow$ \ref{laba4}  $\Longrightarrow$ \ref{laba5} is clear, and
\ref{laba5}  $\Longrightarrow$ \ref{laba6} $\Longrightarrow$ \ref{laba3} is by Theorem \ref{subaddit}.
\ref{laba4}  $\Longrightarrow$ \ref{laba7}: If $ \mu \in (A \cup B)^\sharp$ then $1 = \mu(A \cup B) \le \mu(A) + \mu(B)$, so
at least one of  
$\mu(A), \mu(B)$ must be $1$, so $ \mu \in A^\sharp \cup B^\sharp \subseteq (A \cup B)^\sharp$.
\ref{laba7} $ \Longrightarrow$ \ref{laba8} $ \Longrightarrow$ \ref{laba9}  $ \Longrightarrow$ \ref{laba1} is given by Theorem \ref{9eqThet}. 
\ref{laba3} $\Longrightarrow$ \ref{Ps4u}:   Clear, since in this case each representable quasi-integral is an integral with respect to a measure. 
\ref{Ps4u} $\Longrightarrow$ \ref{Ps5u}  follows from $ \Psi(f+g)( \mu) = \Psi(f) (\mu) + \Psi(g)(\mu)$ for each simple $\mu$. 
\ref{Ps5u}  $\Longrightarrow$ \ref{Ps2u}  $\Longrightarrow$ \ref{laba1} by Theorem \ref{9eqThet} and Proposition \ref{Psifhat}. 
\ref{laba2}  $\Longrightarrow$  \ref{PsOponto}: By  \ref{laba2} each simple deficient topological measure on $X$ is a point mass, and 
a basic open set $W$ in  $X^{\flat}$ from Remark \ref{basicNbd*} has the form
$W = \{ \delta_x: \delta_x \in U_i^{\flat}, i =1, \ldots, n, \delta_x \notin C_j^{\flat}, j =1, \ldots, m. \} $, 
i.e.  $W = U^{\flat}$ for an open set $U = (U_1 \cap \ldots \cap U_n) \setminus (C_1 \cup \ldots \cup C_m)$. 
For an arbitrary open $W = \bigcup W_s = \bigcup U_s^{\flat}$, a simple deficient topological measure
$ \delta_x \in W \Longleftrightarrow x \in U_s$ for some $s$  $ \Longleftrightarrow x \in U = \bigcup U_s$. 

Now let $ \Psi$ be the basic quasi-homomorphism. 
\ref{Ps5u} $\Longrightarrow$ \ref{Ps1u}:  
By Proposition \ref{Psifhat}, $ \mathbf{B} = \Psi(C_0(X))$ is closed in $C(X^\sharp)$. 
By Theorem \ref{rhosimple} $ \mathbf{B} $  contains squares, so by Remark \ref{ThetMul}, $\mathbf{B}$ is an algebra. 
Also,  $ \mathbf{B} $ separates points in $X^\sharp$ and vanishes identically at no $ \mu  \in X^\sharp$. 
Then the Stone-Weierstrass theorem (\cite[(7.37)]{HS})  gives \ref{Ps1u}.
\ref{Ps1u} $\Longrightarrow$ \ref{Ps2u} by Corollary \ref{ontoBij}.
If $X$ is compact, then $X^\sharp = U^\sharp \sqcup (X \setminus U)^\sharp$ for any open $U$, and  \ref{PsOponto}  $\Longrightarrow$ \ref{Lambonto};
\ref{Lambonto} $\Longrightarrow$ \ref{laba2} by \cite[Pr. 5.1]{Aarnes:Pure}.
\end{proof}

\begin{remark}
On $\mathbb{R}^2$ or a square in $\mathbb{R}^2$ there are simple (deficient) 
topological measures that are not point masses; on $\mathbb{R}^1$ there are simple deficient topological measures that are not point masses 
(see, for instance, Examples \ref{ExDan2pt} -\ref{basicExDTM}). 
By Theorem \ref{LambCont} on these spaces the basic (d-) image transformation $\Lambda$ is not an inverse of a continuous proper function. 
If $X$ is a closed interval on the line, then every topological measure is a measure (see \cite[Th. 3]{Grubb:SignedqmDimTheory}), so 
$P_e(X) = X^\sharp$, and the basic image transformation $\Lambda$ is an inverse of a continuous proper function. 
\end{remark}

\begin{example} \label{exqxempt}
Let $Y $ be a closed subset of $X^\sharp$ (respectively, of $X^\flat$.) 
Taking $w :Y \longrightarrow X^\sharp$ (resp., $w :Y \longrightarrow X^\flat$)  to be the inclusion map, we obtain 
a (d-) image transformation $q = w^{-1} \circ \Lambda$, and $q(A) = A^\sharp \cap Y$ (resp., $ A^\flat \cap Y$).
If $X$ has simple (deficient) topological measures that are not
point masses (see, for instance, \cite[Ex. 6.1]{Butler:DTMLC}) and $ Y \cap P_e(X) = \emptyset$ then $q(\{x\}) = \emptyset$ for all $ x \in X$. 
\end{example} 

\begin{theorem} \label{uThet11}
Suppose $X$, $Y$ are  LC spaces. There is a 1-1 correspondence between continuous proper maps $u: Y \rightarrow X$ and 
algebra homomorphisms $\theta: C_0(X) \rightarrow C_0(Y)$, and $\theta(f) = f \circ u$. 
For $ f \in C_0(X)$ and a topological measure $\nu$ on $Y$ we have
$$ \int_Y (f \circ u) \, d\nu = \int_X f\, d(\nu \circ u^{-1}).$$ 
\end{theorem}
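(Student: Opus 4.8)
The plan is to derive the theorem from the correspondence of Section~\ref{ITqhCorr}, together with one small observation about positivity. I would first dispatch the easy implication: given a continuous proper $u: Y \to X$, put $\theta(f) = f \circ u$. Then $f\circ u$ is continuous, and it vanishes at infinity because $\{y : |f(u(y))| \ge \epsilon\} = u^{-1}(\{x : |f(x)| \ge \epsilon\})$ is compact for every $\epsilon > 0$ by properness of $u$; hence $\theta: C_0(X) \to C_0(Y)$ is well defined. It is visibly an algebra homomorphism, since $(f+g)\circ u = f\circ u + g\circ u$ and $(fg)\circ u = (f\circ u)(g\circ u)$.

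For the converse, let $\theta: C_0(X) \to C_0(Y)$ be an algebra homomorphism. The first step is that $\theta$ is positive: if $f \ge 0$ then $g := \sqrt f \in C_0^+(X)$ and $\theta(f) = \theta(g^2) = (\theta(g))^2 \ge 0$. Being linear, positive, and multiplicative on all of $C_0(X)$ (a fortiori on each subalgebra $B(h)$), $\theta$ is a quasi-homomorphism in the sense of Definition~\ref{qhDef}. By Theorem~\ref{ITqh} it corresponds to an image transformation $q$ from $X$ to $Y$, and since $\theta$ is linear, condition \ref{thet1} of Theorem~\ref{9eqThet} holds; by the equivalence \ref{thet1} $\Longleftrightarrow$ \ref{thet5} there is a continuous proper $u: Y \to X$ with $q = u^{-1}$.

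Next I would identify $\theta$ explicitly and check bijectivity. For $y \in Y$ we have $y \in u^{-1}(\{u(y)\}) = q(\{u(y)\})$ (the singleton being compact, hence in the domain of $q$), so by Theorem~\ref{adjCont} $q^*(\delta_y) = \delta_{u(y)}$ — alternatively this is immediate from Remark~\ref{invfunDTM}, since $\delta_y(u^{-1}(A)) = 1$ iff $u(y) \in A$. Theorem~\ref{ITqh}\ref{ITqh1} then gives $\theta(f)(y) = \int_X f\, d(q^*\delta_y) = \int_X f\, d\delta_{u(y)} = f(u(y))$, i.e. $\theta(f) = f \circ u$, so the $u$ produced above realizes $\theta$. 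The assignment $u \mapsto (f \mapsto f\circ u)$ is injective: if $u_1 \ne u_2$, choose $y$ with $u_1(y) \ne u_2(y)$ and $f \in C_0(X)$ separating these points, so $f\circ u_1 \ne f\circ u_2$; and it is onto the algebra homomorphisms by the previous paragraph. This gives the $1$-$1$ correspondence, with $\theta(f) = f\circ u$.

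Finally, for the integral identity, I would apply Theorem~\ref{ITqh}\ref{ITqh5} to obtain $\int_X f\, dq^*\nu = \int_Y \theta(f)\, d\nu = \int_Y (f\circ u)\, d\nu$, and then rewrite the left side using Remark~\ref{invfunDTM}, which says $q^*\nu = \nu\circ u^{-1}$ whenever $q = u^{-1}$, yielding $\int_Y (f\circ u)\, d\nu = \int_X f\, d(\nu\circ u^{-1})$. The hard part here is essentially borrowed: all the structural work (passing from linearity of $\theta$ to existence of the proper map $u$) is carried by Theorem~\ref{9eqThet}. The only genuinely local points to verify are the positivity step in the converse (which relies on $\sqrt f \in C_0^+(X)$ for $f \in C_0^+(X)$) and the membership $f\circ u \in C_0(Y)$, which is precisely where properness of $u$ is needed.
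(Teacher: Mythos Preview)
Your proof is correct and follows essentially the same route as the paper: use Theorem~\ref{9eqThet} to pass from an algebra homomorphism to $q = u^{-1}$, then Theorem~\ref{ITqh}\ref{ITqh1} to recover $\theta(f) = f\circ u$, and Theorem~\ref{ITqh}\ref{ITqh5} with Remark~\ref{invfunDTM} for the integral identity. Your proof is in fact slightly more careful than the paper's in two places: you explicitly verify positivity of $\theta$ via $\theta(f) = (\theta(\sqrt f))^2$ (needed to invoke the quasi-homomorphism machinery, but glossed over in the paper), and you check injectivity of $u\mapsto\theta$ directly rather than leaving it implicit.
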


\begin{proof}
Given a continuous proper  $u: Y \rightarrow X$ define  $\theta(f) = f \circ u$ for $f \in C_0(X)$. 
Since $u$ is proper, $\theta(f) \in C_0(Y)$. Clearly,  $\theta: C_0(X) \rightarrow C_0(Y)$  is an algebra  homomorphism.
Now let $\theta: C_0(X) \rightarrow C_0(Y)$  be an algebra  homomorphism. By Theorem \ref{9eqThet} the corresponding image transformation
$q= u^{-1}$ for some continuous proper function  $u: Y \rightarrow X$. For each $y$ by Theorem \ref{ITqh}\ref{ITqh1} 
$$\theta(f)(y) = \int_X f \, d(q^* \delta_y) = \int_X f \, d(\delta_y \circ u^{-1})  = \int_Y (f \circ u) \, d \delta_y = (f \circ u) (y),$$
hence,  $\theta(f) = f \circ u$. 
The last statement follows from Theorem \ref{ITqh}\ref{ITqh5} and Remark \ref{invfunDTM}.
\end{proof} 
 
The next theorem gives the structure of a bounded quasi-linear  map or a conic quasi-linear map $\theta$  as 
$ \theta = H \circ \Psi$, where $ H$ is an algebra homomorphism.
 
\begin{theorem} \label{QH1to1}
Let $X, Y$ be LC. 
\begin{enumerate}[leftmargin=0.2in, label=(\Roman*),ref=(\Roman*)]
 \item \label{QH1to1a}
 Suppose  $\theta: C_0(X) \longrightarrow C_b(Y)$ is a bounded quasi-linear map (respectively, 
a conic quasi-linear map).
Then there is an algebra homomorphism
$H : C(\mathcal M) \longrightarrow C_b(Y)$ such that $ \theta = H \circ \Psi$ (resp., $ \theta = H \circ \Psi$ on $C_0^+(X)$),
where $ \mathcal M $ is a uniformly bounded in variation family of  topological (resp., deficient topological) measures on $X$.  
\item \label{QH1to1b}
If  $\theta: C_0(X) \longrightarrow C_0(Y)$ is a quasi-homomorphism (resp.,  
$\theta: C_0(X) \longrightarrow C_b(Y)$ is 
a conic quasi-homomorphism from Remark \ref{conicqlfExt})
then  $H$ is defined on $X^\sharp$ (resp., on $X^\flat$) and
$H(\tilde{f}) = \tilde{f} \circ \theta^* \circ i_Y$ for each $ f \in C_0(X)$. 
\item \label{QH1to1c}
If  $H : C(X^\sharp) \longrightarrow C_0(Y)$ (resp.,  $H : C(X^\flat) \longrightarrow C_0(Y)$)  is an algebra homomorphism,
then there is a  quasi-homomorphism (resp., a conic quasi-homomorphism)  
$\theta: C_0(X) \longrightarrow C_0(Y)$ such that $H(g) = g \circ \theta^* \circ i_Y$. 
\end{enumerate}
In particular, if $Y$ is compact, then there is a 1-1 correspondence between 
quasi-homomorphisms $\theta: C_0(X) \longrightarrow C(Y)$ and algebra homomorphisms 
$H : C(X^\sharp) \longrightarrow C(Y)$; moreover, $ \theta = H \circ \Psi$ and $H(\tilde{f}) = \tilde{f} \circ \theta^* \circ i_Y$ for each $ f \in C_0(X)$.
\end{theorem}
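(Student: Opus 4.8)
The plan is to extract everything from the representation machinery already established, chiefly Theorem~\ref{wEq}, Theorem~\ref{ITqh}, Theorem~\ref{uThet11}, Proposition~\ref{Psifhat}, and the compactness facts in Remark~\ref{McompT2} and Lemma~\ref{XdiezCom}. For part~\ref{QH1to1a}: given a bounded quasi-linear map $\theta\colon C_0(X)\to C_b(Y)$ (respectively a conic quasi-linear map), Theorem~\ref{wEq}\ref{wEqii} produces a continuous $w\colon Y\to\mathcal M$, $w(y)=w_y$, with $\theta(f)(y)=\int_X f\,dw_y$ for $f\in C_0(X)$ (resp.\ $f\in C_0^+(X)$), where $\mathcal M=\{\mu:\mu(X)\le\|\theta\|\}$ is a uniformly bounded in variation family of (deficient) topological measures, compact by Remark~\ref{McompT2}. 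I would set $H\colon C(\mathcal M)\to C_b(Y)$, $H(g)=g\circ w$; since $w$ is continuous and $\mathcal M$ is compact, $H(g)\in C_b(Y)$, and $H$ is plainly linear and multiplicative, hence an algebra homomorphism. With $\Psi(f)$ understood as $\tilde f$ restricted to $\mathcal M$, one has $H(\Psi(f))(y)=\tilde f(w_y)=\int_X f\,dw_y=\theta(f)(y)$, so $\theta=H\circ\Psi$ (on $C_0^+(X)$ in the conic case). This part is purely formal.

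For part~\ref{QH1to1b}: if $\theta$ is a (conic) quasi-homomorphism, Theorem~\ref{wEq}\ref{wEqiii} (together with Remark~\ref{conicqlfExt} in the conic case) allows one to take $\mathcal M=X^\sharp$ (resp.\ $X^{\flat}$) with $w$ continuous and k-proper, and by Theorem~\ref{ITqh}\ref{ITqh1a},\ref{ITqh7} one has $w=q^*\circ i_Y=\theta^*\circ i_Y$, i.e.\ $w_y=\theta^*\delta_y$. Feeding this into the formula for $H$ from part~\ref{QH1to1a} gives $H(g)=g\circ\theta^*\circ i_Y$ for every $g\in C(X^\sharp)$ (resp.\ $C(X^{\flat})$), in particular $H(\tilde{f})=\tilde{f}\circ\theta^*\circ i_Y$.

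For part~\ref{QH1to1c}: given an algebra homomorphism $H\colon C(X^\sharp)\to C_0(Y)$, the idea is to realize it geometrically. Since $X^\sharp$ is compact Hausdorff, $C(X^\sharp)=C_0(X^\sharp)$, so Theorem~\ref{uThet11} applied with $X^\sharp$ in the role of the first space yields a continuous proper $u\colon Y\to X^\sharp$ with $H(g)=g\circ u$ for all $g$. Because the sets $K^\sharp$, $K\in\mathscr{K}(X)$, are compact in $X^\sharp$ by Lemma~\ref{XdiezCom}, a proper map into $X^\sharp$ is automatically k-proper, so Theorem~\ref{wEq}\ref{wEqIII} (equivalently Theorem~\ref{ITqh}) produces a quasi-homomorphism $\theta\colon C_0(X)\to C_0(Y)$ with $\theta(f)(y)=\int_X f\,du_y$ and, again by Theorem~\ref{ITqh}\ref{ITqh1a},\ref{ITqh7}, $\theta^*\circ i_Y=u$. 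Hence $H(g)=g\circ u=g\circ\theta^*\circ i_Y$. The conic version ($X^{\flat}$) runs identically.

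Finally, for the $Y$ compact statement: here $C(Y)=C_0(Y)=C_b(Y)$, so parts~\ref{QH1to1a}--\ref{QH1to1b} send a quasi-homomorphism $\theta$ to the algebra homomorphism $H_\theta$ on $C(X^\sharp)$ given by $H_\theta(g)=g\circ\theta^*\circ i_Y$, with $\theta=H_\theta\circ\Psi$, and part~\ref{QH1to1c} sends an algebra homomorphism $H$ to a quasi-homomorphism $\theta_H$ with $H=H_{\theta_H}$; these assignments are mutually inverse because, for any such $H$, $(H\circ\Psi)(f)(y)=H(\tilde{f})(y)=\tilde{f}(\theta^*\delta_y)=\int_X f\,d(\theta^*\delta_y)=\theta(f)(y)$ by Theorem~\ref{ITqh}\ref{ITqh1}. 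The one genuinely non-bookkeeping step is part~\ref{QH1to1c}: one must invoke the locally compact ``Gelfand duality'' of Theorem~\ref{uThet11} with the compact source space $X^\sharp$ and then route the resulting proper map $Y\to X^\sharp$ back through Theorem~\ref{ITqh}/\ref{wEq}, taking care to verify that properness into $X^\sharp$ upgrades to k-properness and that $\theta^*\circ i_Y$ recovers $u$; everything else reduces to the identities $\theta(f)(y)=\int_X f\,dw_y$ and $w=\theta^*\circ i_Y$.
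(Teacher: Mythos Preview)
Your proposal is correct and follows essentially the same route as the paper: in each part you invoke exactly the same machinery (Theorem~\ref{wEq}, Theorem~\ref{ITqh}, Theorem~\ref{uThet11}) and define $H(g)=g\circ w$ with $w=\theta^*\circ i_Y$. The only cosmetic difference is in part~\ref{QH1to1c}, where the paper sets $\theta=H\circ\Psi$ directly and then computes $u=\theta^*\circ i_Y$ via (\ref{EqByInt}), whereas you build $\theta$ from $u$ via Theorem~\ref{wEq}\ref{wEqIII} and read off $u=\theta^*\circ i_Y$ from Theorem~\ref{ITqh}; these yield the same $\theta$.
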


\begin{proof}
\ref{QH1to1a}. 
Suppose  $\theta: C_0(X) \longrightarrow C_b(Y)$ is a bounded quasi-linear map
(resp., 
a conic quasi-linear map).
With $ \mathcal M$ and $w:Y \longrightarrow \mathcal M$ given by Theorem \ref{wEq},
consider an algebra homomorphism $H : C(\mathcal M) \longrightarrow C_b(Y)$ defined by $H(g) = g \circ w$. 
Let $ f \in C_0(X)$ (resp.,$ f \in C_0^+(X)$).  For each $y$  we have: 
$\theta(f) (y) =  \int f \, d w_y = \tilde{f} (w(y))$. 
Thus, $\theta(f) = \tilde{f} \circ w = H(\tilde{f}) = H(\Psi(f))$, so $ \theta = H \circ \Psi$ (resp., $ \theta = H \circ \Psi$ on $C_0^+(X)$).   \\
\ref{QH1to1b}. 
By Theorem \ref{ITqh} or Remark \ref{conicqlfExt}
 $\theta(f)(y) = \int_X f\, d(\theta^* \delta_y) = \tilde{f} (\theta^* \delta_y)$ 
for each $y$ and each  $ f \in C_0(X)$. Thus,
$ H(\tilde{f}) = \theta(f) = \tilde{f} \circ \theta^* \circ i_Y$.  \\
\ref{QH1to1c}. 
Let
$H : C(X^\sharp) \longrightarrow C_0(Y)$ (resp.,  $H : C(X^\flat) \longrightarrow C_0(Y)$) be an algebra homomorphism. 
By Theorem \ref{uThet11} $H(g) = g \circ u$ for some continuous proper map $u:Y \rightarrow X^\sharp$ (resp., $u:Y \rightarrow X^\flat$).
Then $ \theta = H \circ \Psi: C_0(X) \longrightarrow C_0(Y)$ is a quasi-homomorphism (resp., a conic quasi-homomorphism). 
Let $y \in Y$. For the (deficient) topological measure $u(y)$ and any $ f \in C_0^+(X)$ 
$$ \int_X f\, d(u(y)) = \tilde{f}(u(y)) = H(\tilde{f}) (y) = \theta(f) (y) = \int_X f \, d(\theta^* \delta_y).$$
By (\ref{EqByInt}) $u(y)=\theta^* \delta_y$, so $ u = \theta^* \circ i_Y$ 
(i.e. $u = w$ from Theorem \ref{wEq}).
Then $H(g) = g \circ u = g \circ \theta^* \circ i_Y$. 
\end{proof}

\begin{definition} \label{STMLC}
A signed topological measure on $X$ is a set function
$\mu:  \mathscr{C}(X) \cup \mathscr{O}(X)  \longrightarrow  [-\infty, \infty]$ that assumes at most one of $\infty, 
-\infty$ and satisfies the following conditions:
\begin{enumerate}[label=(STM\arabic*),ref=(STM\arabic*)]
\item \label{STM1} 
if $A,B, A \sqcup B \in \mathscr{K}(X) \cup \mathscr{O}(X) $ then
$
\mu(A\sqcup B)=\mu(A)+\mu(B);
$
\item \label{STM2}  
$
\mu(U)=\lim\{\mu(K):K \in \mathscr{K}(X), \  K \subseteq U\}
$ for $U\in\mathscr{O}(X)$;
\item \label{STM3}
$
\mu(F)=\lim\{\mu(U):U \in \mathscr{O}(X), \ F \subseteq U\}
$ for  $F \in \mathscr{C}(X)$.
\end{enumerate}
If in \ref{STM1}  $A,B \in \mathscr{K}(X)$ then $\mu$ is called a signed deficient topological measure. 
A signed deficient topological measure $ \mu$  is finite if $\| \mu \| =  \sup \{ | \mu(K)|  :  K \in \mathscr{K}(X)\} = \sup \{ | \mu(U)|  :  U \in \mathscr{O}(X) \} < \infty$.
We denote by $ \mathbf{SDTM}(X)$ the space of all finite signed deficient topological measures on $X$. 
\end{definition}

Signed deficient topological measures on locally compact spaces are studied in \cite{Butler:STMLC} and \cite{Butler:Decomp}.
Note that a finite signed deficient topological measure $ \mu$ can be written as $ \mu = \mu_1 - \mu_2$, where $ \mu_1, \mu_2$ are finite deficient topological measures, 
see \cite[Th. 40]{Butler:STMLC}.  

\begin{theorem} \label{mOnXstar}
Let $X$ be LC. Let $\lambda$ be a (deficient) topological measure with the corresponding quasi-integral $ \rho$. TFAE:
\begin{enumerate}[leftmargin=0.25in, label=(\roman*),ref=(\roman*)]
\item \label{mOnXstar1}
$ \lambda$ is a representable (deficient) topological measure.
\item \label{mOnXstar2}
There is a regular Borel probability measure $l$ on  $X^\sharp$ (resp., on $X^{\flat}$) such that for every  $ f \in C_0(X)$ (resp., $ f \in C_0^+(X)$)
$$\rho(f) =  \tilde f( \lambda )= \int_{X^{\flat}} \tilde{f} (\nu) \, dl(\nu). $$
\item \label{mOnXstar3}
There is a linear functional $L$ on $X^\sharp$ (resp., on $X^{\flat}$) of norm $1$ such that $ \rho = L \circ \Psi$.
\item \label{mOnXstar4}
$\lambda = l \circ \Lambda$.
\end{enumerate}
\end{theorem}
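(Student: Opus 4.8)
The plan is to establish the cycle $\ref{mOnXstar1}\Leftrightarrow\ref{mOnXstar4}\Leftrightarrow\ref{mOnXstar2}\Leftrightarrow\ref{mOnXstar3}$. The argument runs in parallel for topological measures (working with $X^\sharp$, the quasi-linear functional, and $f\in C_0(X)$) and for deficient topological measures (working with $X^\flat$, the p-conic quasi-linear functional, and $f\in C_0^+(X)$), so I would write it once for the deficient case. The central object is the basic (conic) quasi-homomorphism $\Psi\colon C_0(X)\to C(X^\flat)$, $\Psi(f)=\tilde f$, which by Proposition \ref{Psifhat} is an isometric embedding with closed image $\mathbf B:=\Psi(C_0(X))$, and which by Remark \ref{PsiLam} corresponds to the basic (d-) image transformation $\Lambda$, $\Lambda(A)=A^\flat$. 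From Theorem \ref{ITqh}\ref{ITqh5},\ref{ITqh7} (together with Remark \ref{conicqlfExt}) one has $\Psi^*=\Lambda^*$ and, for every finite deficient topological measure $l$ on the compact Hausdorff space $X^\flat$ (Lemma \ref{XdiezCom}),
\[
  \int_X f\,d(\Lambda^*l)=\int_{X^\flat}\tilde f\,dl=\int_{X^\flat}\Psi(f)\,dl,\qquad f\in C_0^+(X).
\]
A regular Borel probability measure $l$ on $X^\flat$ restricts to an element of $\mathbf{DTM}(X^\flat)$, the expression ``$l\circ\Lambda$'' in \ref{mOnXstar4} is precisely $\Lambda^*l$, and a ``linear functional on $X^\flat$'' means an element of $C(X^\flat)^*$.

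First I would dispatch the formal links. If $\lambda=\Lambda^*l$, the displayed identity gives $\rho(f)=\int_Xf\,d\lambda=\int_{X^\flat}\tilde f\,dl$ for $f\in C_0^+(X)$, which is \ref{mOnXstar2}; conversely \ref{mOnXstar2} says $\int_Xf\,d\lambda=\int_Xf\,d(\Lambda^*l)$ for all $f\in C_c^+(X)$, hence $\lambda=\Lambda^*l$ by (\ref{EqByInt}). Given $l$ as in \ref{mOnXstar2}, put $L(g)=\int_{X^\flat}g\,dl$; then $L\in C(X^\flat)^*$ has $\|L\|=1$ since $l$ is a probability measure, and $L\circ\Psi=\rho$ by the displayed identity, which is \ref{mOnXstar3}. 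For $\ref{mOnXstar1}\Leftrightarrow\ref{mOnXstar4}$ I would prove the Choquet-type identity $\overline{\mathrm{co}}(X^\flat)=\{\Lambda^*l:\ l\ \text{a regular Borel probability measure on }X^\flat\}$ inside $\mathbf{DTM}(X)$: a direct computation gives $\Lambda^*\delta_\nu=\nu$ for $\nu\in X^\flat$ (because $(\Lambda^*\delta_\nu)(A)=\delta_\nu(A^\flat)=1$ iff $\nu(A)=1$, so $\Lambda^*\delta_\nu$ agrees with $\nu$ on all compact sets), so $\Lambda^*$ carries the finitely supported probability measures exactly onto $\mathrm{co}(X^\flat)$; the map $l\mapsto\Lambda^*l$ is continuous from the weak$^*$ topology on probability measures to the weak topology on $\mathbf{DTM}(X)$ because $\int_Xf\,d(\Lambda^*l)=\int_{X^\flat}\tilde f\,dl$ with $\tilde f\in C(X^\flat)$; since the set of probability measures on $X^\flat$ is weak$^*$-compact (as $X^\flat$ is compact) and the finitely supported ones are weak$^*$-dense in it, its image $\{\Lambda^*l\}$ is compact, contains $\mathrm{co}(X^\flat)$, and is contained in $\overline{\mathrm{co}}(X^\flat)$, so the three sets coincide ($\mathbf{DTM}(X)$ is Hausdorff by Remark \ref{McompT2}). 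Hence $\lambda$ is representable iff $\lambda=\Lambda^*l=l\circ\Lambda$ for such an $l$, which is \ref{mOnXstar4}.

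The remaining implication $\ref{mOnXstar3}\Rightarrow\ref{mOnXstar2}$ is where the real work is. Given $L$ of norm $1$ with $\rho=L\circ\Psi$, the Riesz representation theorem yields a finite signed regular Borel measure $\sigma$ on $X^\flat$ with $L(g)=\int_{X^\flat}g\,d\sigma$ and $|\sigma|(X^\flat)=1$. Since $\rho(f)=L(\Psi(f))\ge0$ for $f\in C_0^+(X)$, $L$ is nonnegative on $\mathbf B^+=\Psi(C_0^+(X))$. To upgrade $\sigma$ to a probability measure I would identify $\mathbf B$: it is a closed, point-separating subalgebra of $C(X^\flat)$ (it contains squares by Theorem \ref{rhosimple}, hence is an algebra by Remark \ref{ThetMul}, and it separates the points of $X^\flat$ by (\ref{EqByInt})), so by Stone--Weierstrass either $\mathbf B=C(X^\flat)$ (when $X$ is compact) or $\mathbf B$ is the maximal ideal of functions vanishing at the one point $\nu_\infty\in X^\flat$ which is the weak limit of $\delta_x$ as $x\to\infty$ (when $X$ is noncompact, where $\tilde f(\nu_\infty)=0$ for all $f\in C_0(X)$). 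Nonnegativity of $L$ on $\mathbf B^+$ localizes to give $\sigma\ge0$ on $X^\flat\setminus\{\nu_\infty\}$, and then $|\sigma|(X^\flat)=1$ forces the possible negative part of $\sigma$, which can only sit at $\nu_\infty$, to vanish; thus $l:=\sigma$ is a regular Borel probability measure and $\rho(f)=\int_{X^\flat}\tilde f\,dl$, which is \ref{mOnXstar2}.

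I expect that last step to be the main obstacle: a norm-one functional need be neither positive nor represented by a probability measure, so one must exploit the precise position of $\mathbf B=\Psi(C_0(X))$ inside $C(X^\flat)$ through Stone--Weierstrass, and carry out the bookkeeping when $X$ is noncompact, where the extra mass permitted by $\|L\|=1$ can only be placed at the ``point at infinity'' $\nu_\infty$ and is eliminated by nonnegativity on $\mathbf B^+$. Everything else is either a direct computation with the displayed identity and (\ref{EqByInt}), or the standard barycenter argument over the compact space $X^\flat$.
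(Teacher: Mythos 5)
Your chain \ref{mOnXstar1}$\Leftrightarrow$\ref{mOnXstar4}$\Leftrightarrow$\ref{mOnXstar2}$\Rightarrow$\ref{mOnXstar3} is sound, and it is in places more elementary than the paper's argument: you get \ref{mOnXstar1}$\Leftrightarrow$\ref{mOnXstar4} by showing that $\Lambda^*$ maps the (weak$^*$-compact) set of regular Borel probability measures on $X^{\flat}$ exactly onto $\overline{\mathrm{co}}(X^{\flat})$, using $\Lambda^*\delta_\nu=\nu$, additivity and continuity of $\Lambda^*$ (Theorem \ref{adjCont}), and density of finitely supported probability measures, whereas the paper obtains \ref{mOnXstar1}$\Rightarrow$\ref{mOnXstar2} from the vector-valued integral (barycenter) theorems in Rudin and \ref{mOnXstar2}$\Rightarrow$\ref{mOnXstar4} from a direct regularity computation with Urysohn functions; your use of $\int_X f\,d(\Lambda^*l)=\int_{X^{\flat}}\tilde f\,dl$ together with (\ref{EqByInt}) replaces that computation cleanly.

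The genuine gap is in your step \ref{mOnXstar3}$\Rightarrow$\ref{mOnXstar2}, and since this is the only implication leading out of \ref{mOnXstar3}, the equivalence is not closed. The failure is the claim that $\mathbf B=\Psi(C_0(X))$ is a closed point-separating \emph{subalgebra} of $C(X^{\flat})$ to which Stone--Weierstrass applies. Remark \ref{ThetMul} turns ``contains squares'' into multiplicativity only for a linear map, i.e.\ only after $\mathbf B$ is known to be a linear subspace; but $\Psi$ is the nonlinear Gelfand transform, and its image is not closed under addition whenever some simple (deficient) topological measure is non-additive on a pair of functions: if $\tilde f+\tilde g=\tilde h$, evaluation at point masses forces $h=f+g$, which is impossible when $\int (f+g)\,d\mu\neq\int f\,d\mu+\int g\,d\mu$ for some simple $\mu$ (Example \ref{nvssf}, or the Aarnes measure of Example \ref{Aatm}). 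So $\mathbf B$ is not an algebra, and your dichotomy ``$\mathbf B=C(X^{\flat})$ or the maximal ideal at $\nu_\infty$'' cannot hold in general: either alternative would make $\Psi$ onto a subalgebra in a way that, via Corollary \ref{ontoBij} and Theorem \ref{LambCont}, forces every simple measure to be a point mass, contradicting the examples just cited. Consequently the localization ``$\sigma\ge 0$ off $\nu_\infty$'' has no support: positivity of $L$ is known only on the non-linear, non-convex set $\Psi(C_0^+(X))$, which is too small to control the sign of the representing signed measure. Note that the paper treats \ref{mOnXstar3}$\Rightarrow$\ref{mOnXstar2} as immediate, reading the functional in \ref{mOnXstar3} as the one produced in \ref{mOnXstar2}$\Rightarrow$\ref{mOnXstar3}, i.e.\ integration against a probability measure; under the stronger reading you adopted (an arbitrary norm-one linear $L$ on $C(X^{\flat})$ with $\rho=L\circ\Psi$) a genuinely different argument is needed, and the Stone--Weierstrass route you propose does not provide it.
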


\begin{proof}
We shall prove the theorem for deficient topological measures, for topological measures the proof is similar.
\ref{mOnXstar1} $ \Longrightarrow$ \ref{mOnXstar2}:
We extend function $ \tilde f$ defined on  $ \mathbf{DTM}(X)$ to $\hat f$ on $ \mathbf{SDTM}(X)$  by $ \hat f (\mu) = \tilde f (\mu_1) - \tilde f (\mu_2)$ 
if $ \mu  = \mu_1 - \mu_2$, where $ \mu_1, \mu_2$ are finite deficient topological measures. Note that $ \hat f$ is well-defined: 
if  $\mu =   \mu_1 - \mu_2 = \nu_1 - \nu_2$ then $\tilde f (\mu_1 + \nu_2) = \tilde f (\mu_2 + \nu_1)$, so $ \hat f ( \mu_1 - \mu_2) = \hat f ( \nu_1 - \nu_2)$.
$\hat f$ is also linear on $ \mathbf{SDTM}(X)$.  If $ \mu  = \mu_1 - \mu_2 \neq 0$ then $ \mu_1  \neq \mu_2$, and there is $ f \in C_0(X)$ such that 
$ \tilde f (\mu_1) - \tilde f (\mu_2) \neq 0$. Thus, $ \{ \hat f: f \in C_0(X) \}$ separates points in  $ \mathbf{SDTM}(X)$.  
Let $ \mathcal F$ be the linear space generated by $ \{ \hat f: f \in C_0(X) \}$, and topology $ \tau = \sigma(\mathbf{SDTM}(X), \mathcal F)$. 
On $ \mathbf{DTM}(X)$, $\tau$  is the weak topology; also, the algebraic dual space of $ (\mathbf{SDTM}(X), \tau)$ is $\mathcal F$ (see \cite[Th. 3.10]{Rudin}.
By Lemma \ref{XdiezCom} $ X^{\flat}$ is compact, and so is the set of representable deficient topological measures 
(by \cite[Th. 2.4]{Butler:WkConv}). 
Then \cite[Th. 3.28]{Rudin} holds, where the vector-valued integral is with respect to the identity function;  
the vector-valued integral is according to \cite[Def. 3.26]{Rudin}, and it exists by \cite[Th. 3.27]{Rudin}. 
So $ \lambda$ is the vector-valued integral over $ X^{\flat}$ of the identity function with respect to some regular Borel probability  measure $l$.  
Applying the vector-valued integral with $\hat f =\tilde f$ to $ \lambda$ (as in   \cite[Def. 3.26]{Rudin}) gives \ref{mOnXstar2}. 
\ref{mOnXstar2} $ \Longrightarrow$ \ref{mOnXstar3}: Let $L$ be the linear functional on $C(X^{\flat})$ corresponding to $l$. Then
for any $f \in C_0^+(X)$
$$ (L \circ \Psi) (f) = L(\tilde f) = \int_{X^{\flat}} \tilde f( \nu) \, dl(\nu)  = \rho(f), $$
so $ \rho =  L \circ \Psi $. 
\ref{mOnXstar3} $ \Longrightarrow$ \ref{mOnXstar2}: clear.
\ref{mOnXstar2} $ \Longrightarrow$ \ref{mOnXstar4}: 
Note that  $l \circ \Lambda = \Lambda^* l$ is a deficient topological measure on $X$. 
Let $K \in \mathscr{K}(X)$, and let $f \in C_c(X), 1_K \le f \le 1$. For any 
$\nu \in K^{\flat}$ we have $ \int_X f\, d\nu = 1$, i.e. $\tilde f (\nu) = 1$. Thus, a continuous function 
$\tilde f = 1$ on $ K^{\flat}$. Then by Remark \ref{RemBRT}\ref{mrDTM}
\begin{align*}
 \lambda(K)  &= \inf \{ \rho(f) : f \in C_c(X), 1_K \le f \le 1  \}  = \inf \{ \int_{X^{\flat}}  \tilde f(\nu) \, dl(\nu), 1_{K^{\flat}} \le \tilde f \le 1  \}  \\
 & \ge l(K^{\flat}).
\end{align*}
Thus, $ \lambda \ge l \circ \Lambda$. 
Now let $U \in \mathscr{O}(X)$, and $f \in C_c(X), 0 \le f \le 1_U$. 
By \cite[Th. 49]{Butler:Integration},  $ \nu \notin U^{\flat} \Longrightarrow \nu(Coz (f)) = 0 \Longrightarrow \tilde f(\nu) = \int_X f\, d \nu = 0$, 
so $0 \le \tilde f \le  1_{U^{\flat}}$, and
by formula (\ref{muUprosche})
\begin{align*}
 \lambda(U) &= \sup\{ \rho(f): f \in C_c(X), 0 \le f \le 1_U\}  = \sup \{ \int_{X^{\flat}} \tilde f(\nu)\, dl(\nu), 0 \le  \tilde f \le  1_{U^{\flat}} \}  \\ 
 &\le l(U^{\flat}),
\end{align*}
showing $ \lambda \le l \circ \Lambda$. Thus,  $ \lambda = l \circ \Lambda$. 
\ref{mOnXstar4} $ \Longrightarrow$ \ref{mOnXstar1}: A probability measure can be approximated by a convex combination of point masses. 
\end{proof}

\begin{remark}
The measure $l$ in the above theorem may not be unique, as \cite[Ex. 6.1]{Aarnes:Pure} shows. When $X$ is compact, Theorem \ref{mOnXstar}
for topological measures is proved in \cite[Th. 4.1, 6.1]{Aarnes:Pure}.
When $X$ and $Y$ are compact, the part "in particular" in Theorem \ref{QH1to1} is \cite[Th. 25]{AarnesJohansenRustad}.
\end{remark}

In many theorems and examples we generalize
results that appeared for topological measures, image transformations, and quasi-homomorphisms
in the context (or close in spirit to the context) of compact spaces in  
\cite{Aarnes:Pure},
\cite{Aarnes:ITfirst},  \cite{AarnesTaraldsen}, \cite{AarnesGrubb}, \cite{AarnesJohansenRustad}, 
\cite{OrjanAlf:HomSimpleTM},  \cite{Pedersen}, 
\cite{AlfMultidimMedian}, \cite{AlfImTrans}, \cite{AlfMedian}, \cite{Taraldsen:RegQHinLC}, and 
\cite{Taraldsen:ITQMonLC}. Our generalizations occur in several directions: 
to locally compact spaces; to deficient topological measures; to d-image transformations; to (conic) quasi-linear maps.
Generalizing we use results from many sources. 
For instance, the majority of Theorem \ref{ITqh} 
is inspired by results from  \cite{Aarnes:ITfirst},   \cite{AarnesTaraldsen}, \cite{AarnesJohansenRustad}, \cite{Pedersen}, and \cite{Taraldsen:RegQHinLC}.
Many important results first appeared for topological measures and image transformations on compact spaces in \cite{Aarnes:ITfirst}, 
and then often also appear in one or more bibliography items in the above list. Such results are
Lemma \ref{ITsvva}\ref{ITsv1dop}, Example \ref{staIT}, Theorem \ref{ITw}, Example \ref{consQst}, 
Theorem \ref{ITfromUf}, definitions of $q^*$ and $ \theta^*$, Theorem \ref{q11eq},
parts \ref{thet1}-\ref{thet3}, \ref{thet5} of Theorem \ref{9eqThet}, Corollary \ref{ontoBij}, and Example \ref{exqxempt}. 
For compact spaces and image transformations, Theorem \ref{ITsolid}  is in \cite{AlfImTrans}, \cite{AarnesGrubb}, and \cite{Pedersen}; 
Example \ref{qhOnXY} is in \cite{AarnesGrubb}; and  parts \ref{laba2}-\ref{laba4}, \ref{laba7}, \ref{PsOponto} of Theorem \ref{LambCont} is in 
\cite{Aarnes:Pure}. 
In  the above mentioned studies different authors used different definitions of image transformations and quasi-homomorphisms, 
but our results show the equivalence of our definition to various definitions used before. 
  
In our next paper we will consider some applications of image transformations and quasi-homomorphisms on locally compact spaces. 


\subsection*{Acknowledgment}
The author would like to thank the Department of Mathematics at the University of California Santa Barbara for its supportive environment.

\end{document}